\documentclass[11pt]{article}

\usepackage[T1]{fontenc}
\usepackage[margin=1in]{geometry}
\usepackage{amsmath,amssymb,amsthm}
\usepackage{mathtools}
\usepackage{microtype}
\usepackage{tikz}
\usepackage{float}
\usepackage[colorlinks=true,linkcolor=blue,citecolor=blue,urlcolor=blue]{hyperref}
\usepackage[nameinlink,noabbrev]{cleveref}

\crefname{theorem}{theorem}{theorems}
\Crefname{theorem}{Theorem}{Theorems}

\crefname{lemma}{lemma}{lemmas}
\Crefname{lemma}{Lemma}{Lemmas}

\crefname{proposition}{proposition}{propositions}
\Crefname{proposition}{Proposition}{Propositions}

\crefname{corollary}{corollary}{corollaries}
\Crefname{corollary}{Corollary}{Corollaries}

\usetikzlibrary{arrows.meta}

\usepackage{aliascnt}

\newtheorem{theorem}{Theorem}

\newaliascnt{lemma}{theorem}
\newtheorem{lemma}[lemma]{Lemma}
\aliascntresetthe{lemma}

\newaliascnt{proposition}{theorem}
\newtheorem{proposition}[proposition]{Proposition}
\aliascntresetthe{proposition}

\newaliascnt{corollary}{theorem}

\aliascntresetthe{corollary}
\numberwithin{equation}{section}

\newcommand{\R}{\mathcal R}

\title{A 64-Rectangle Counterexample to Wegner's Conjecture\\
and LP Gaps up to $5/2$}
\author{Aranya Kumar Bal\\ \small{Indian Statistical Institute, Kolkata}}
\date{}

\begin{document}

\maketitle

\begin{abstract}
Since its formulation in 1965, Wegner's conjecture has been one of the central open problems 
in geometric transversal theory, profoundly influencing research on geometric intersection graphs, 
packing and piercing, and approximation algorithms. The recent breakthrough of Ajwani, Gajjala,
Raman, and Ray settled the conjecture in the negative by constructing the first counterexample, consisting of 
$2196\cdot 8^9$ axis-parallel rectangles using a package-and-port construction alongside a computer-assisted proof, 
and also established an integrality gap of $17891/8064\approx2.21$ for the standard LP relaxation 
of the Maximum Independent Set of Rectangles problem.

In this paper, we substantially strengthen this breakthrough by giving an
explicit and conceptually simple counterexample using only $64$ rectangles.
Whereas the previous counterexample relied on a large construction with a computer-assisted
proof, ours is derived from a small geometric gadget with a recursive
composition principle alongside a hand-checkable proof, making the obstruction to Wegner's conjecture transparent.

Our construction starts from an eight-rectangle gadget whose independent sets
admit an injective assignment to four ordered labels. Combining four
horizontal and four vertical copies of this gadget yields the
$64$-rectangle counterexample. Iterating the same horizontal-vertical
construction produces a recursive family $P_r$ satisfying
$\nu(P_r)=4^{2^r}$. Already at the second level, the standard clique
relaxation has value exceeding $2\nu(P_2)$, while at the third level it
achieves an integrality gap of $73/32$, improving on the previous best value
of $17891/8064$. We further construct explicit recursive fractional solutions
whose values converge to $(5/2)\nu(P_r)$, together with piercing sets of size
exactly $(5/2)\nu(P_r)$ at every level. Consequently, both the clique-LP gap
and the packing-piercing ratio of the recursive family converge to $5/2$,
yielding rectangle families with packing-piercing ratios arbitrarily close to
$5/2$ from below. Finally, by taking disjoint unions with isolated
rectangles, we show that every rational number in $[1,5/2)$ occurs both as a
standard LP gap for Maximum Independent Set of Rectangles and as a
packing-piercing ratio of a suitable rectangle family.
\end{abstract}

\section{Introduction}\label{sec:introduction}

Let $\R$ be a finite family of closed axis-parallel rectangles in the plane.
Write $\nu(\R)$ for the maximum size of a pairwise disjoint subfamily and
$\tau(\R)$ for the minimum number of points required to intersect every
rectangle. In 1965, Wegner~\cite{Wegner_1965} conjectured that
\[
\tau(\R)\le 2\nu(\R)-1.
\]
Over the past six decades, this conjecture has been one of the central open
problems in geometric transversal theory. Beyond its intrinsic appeal, it has
profoundly influenced the study of geometric intersection graphs, rectangle
packing and piercing, combinatorial optimization, and approximation
algorithms.

A considerable body of evidence supported Wegner's conjecture and suggested
that the proposed bound was essentially best possible. Fon-Der-Flaass and
Kostochka~\cite{FonDerFlaass_1993} proved that no universal coefficient below
$5/3$ is possible, while a construction of Jel{\'\i}nek, reported by Correa,
Feuilloley, P{\'e}rez-Lantero, and
Soto~\cite{correa2015independenthitting}, gives
$\tau(\R)=2\nu(\R)-4$ for every $\nu(\R)\ge4$. Chen and
Dumitrescu~\cite{Chen_2020} further investigated the sharpness of Wegner's
inequality, and Correa et al.~\cite{correa2015independenthitting} studied the
special case of diagonal-intersecting rectangles, where the packing-piercing
gap is known to lie between $2$ and $4$. Together with steadily improving
general upper bounds, these results made the existence of a counterexample
appear increasingly unlikely.

The conjecture was finally settled by Ajwani, Gajjala, Raman, and
Ray~\cite{ajwani2026counterexamplewegnersconjectureaxisparallel} in the negative, who
constructed the first counterexample and, in the same work, established an
integrality gap of $17891/8064>2.21$ for the standard LP relaxation of the
Maximum Independent Set of Rectangles problem. Their construction is both
ingenious and highly nontrivial, combining a recursive package-and-port
framework with computational verification via linear programming and dynamic
programming. Inspired by the filter-bed construction of Asplund and
Gr\"unbaum~\cite{Asplund_1960}, it produces a family consisting of
$2196\cdot8^9$ rectangles. Besides settling a sixty-year-old conjecture, this
breakthrough opened a number of natural questions. Can the failure of
Wegner's conjecture be witnessed by a substantially smaller explicit
construction? Can one replace computational certification by a direct
combinatorial argument? More broadly, can such a construction reveal new
structural insights into rectangle packing, piercing, and LP relaxations?

In this paper, we answer these questions affirmatively. We present a simple
recursive geometric construction that yields an explicit counterexample using
only $64$ rectangles, reducing the size of the first counterexample by several
orders of magnitude. More importantly, our construction replaces
computational verification with a direct combinatorial analysis, exposing the
underlying mechanism responsible for the failure of Wegner's conjecture. This
recursive viewpoint also leads to new structural results on rectangle
packing-piercing ratios and LP relaxations that substantially strengthen the
current state of the art. The precise statements of our results are presented
in Section~\ref{sec:our_results}.

\section{Our contributions}
\label{sec:our_results}

We begin with a conceptually simple counterexample to Wegner's conjecture.
Our construction is based on an eight-rectangle gadget equipped with four
ordered labels, called \emph{slots}. These slots regulate how recursive copies
interact and provide a direct combinatorial certificate for bounding the size
of independent sets. Combining four horizontal and four vertical copies of the
gadget yields an explicit counterexample with only $64$ rectangles.

\begin{theorem}[A 64-rectangle counterexample]
\label{thm:counterexample}
There is a family $\R$ of $64$ axis-parallel rectangles such that
\[
\nu(\R)=16
\qquad\text{and}\qquad
\tau(\R)\ge 32.
\]
Consequently,
\[
\tau(\R)>2\nu(\R)-1.
\]
\end{theorem}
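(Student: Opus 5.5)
We will build $\R$ from an eight-rectangle gadget $G$ and certify both bounds combinatorially. The gadget is designed to satisfy three properties. (i) $\nu(G)=4$. (ii) There is a fractional packing of value $4$ in which each of the eight rectangles gets weight $1/2$: every point of the plane lies in at most two rectangles of $G$. (iii) There are four ordered slots $1<2<3<4$ attached to positions along the long axis of $G$, and an injective map sending every independent subfamily of $G$ to slots, each rectangle to one slot it occupies. Concretely, I would take $G$ to be four pairs of thin horizontal rectangles in a staircase. Pair $j$ spans the slot interval $[j,j+1]$ in $x$, and the two rectangles of a pair cross each other in a cross-shaped overlap. This makes each slot's two rectangles pairwise intersecting, which gives $\nu(G)\le 4$, while one rectangle per slot chosen appropriately is disjoint, which gives $\nu(G)=4$.

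To form $\R$, place four horizontal copies $H_1,\dots,H_4$ of $G$, stacked in disjoint horizontal strips. Then place four vertical copies $V_1,\dots,V_4$, obtained by rotating $G$, in disjoint vertical strips. The placement is arranged so that slot $b$ of $H_a$ and slot $a$ of $V_b$ meet in the grid cell $(a,b)$ and nowhere else. This gives $|\R|=64$.

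For the lower bound $\nu(\R)\ge 16$, take in each cell $(a,b)$ a representative chosen so that the sixteen chosen rectangles are pairwise disjoint; this is arranged by the explicit coordinates. For the upper bound $\nu(\R)\le16$, let $I$ be independent and apply the slot map of (iii) to each copy. Every rectangle of $I$ lying in $H_a$ occupies a distinct slot $b$, which is the cell $(a,b)$, and similarly for the $V_b$. A horizontal rectangle and a vertical rectangle occupying the same cell intersect, by the crossing design. So the sixteen cells each receive at most one rectangle of $I$, giving $\nu(\R)\le 16$.

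For $\tau(\R)\ge32$, I avoid the LP, since the LP value is only an upper bound on $\nu$, and instead argue directly. By (ii), applied within copies together with the cell structure, each point of the plane pierces at most two rectangles of $\R$. Since $|\R|=64$, this gives $\tau(\R)\ge 64/2=32$. Hence $\tau(\R)\ge 32>31=2\nu(\R)-1$.

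The main obstacle is making the geometry simultaneously satisfy three demands: depth at most two everywhere, including where horizontal and vertical copies cross; every cell containing a crossing pair so that the slot argument bounds $\nu$; and an injective slot assignment inside $G$ that still forces $\nu(G)=4$. These demands pull against each other, because crossings raise depth. I would first try to make the horizontal and vertical rectangles meet only along pairwise "plus-shaped" crossings of distinct pairs, keeping each crossing region of depth exactly two. I would verify this by checking the finitely many rectangle intersections of $G$ and of one cell.
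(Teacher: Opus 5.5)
Your proposal has a genuine gap. It sits exactly at the point you flag as the main obstacle, and with the gadget you describe it cannot be closed.

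Your bound $\nu(\mathcal R)\le 16$ needs every rectangle of $H_a$ sitting in slot $b$ to meet every rectangle of $V_b$ sitting in slot $a$. Otherwise two such disjoint rectangles would form an independent set landing in the same cell. So your crossing design must put a complete bipartite join between slot $b$ of $H_a$ and slot $a$ of $V_b$.

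But you made the two rectangles of each slot cross each other. Then in cell $(a,b)$ the two rectangles of slot $b$ of $H_a$ and the two rectangles of slot $a$ of $V_b$ pairwise intersect. By the Helly property they share a point lying in four rectangles, so your claim that every point lies in at most two rectangles of $\mathcal R$ is false.

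Worse, each of the $64$ rectangles belongs to exactly one of these sixteen $4$-cliques. The sixteen Helly points therefore pierce $\mathcal R$, giving $\tau(\mathcal R)\le 16=\nu(\mathcal R)$, so the family is not a counterexample at all. No adjustment of coordinates helps, because the joins are forced by your own $\nu$-argument. Letting a rectangle occupy several slots does not rescue it either: any slot class containing an intersecting pair produces a triangle at a crossing, hence a point of depth three.

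The missing idea is that rectangles sharing a slot must be pairwise \emph{disjoint} inside the gadget, while independent sets still admit injective slot assignments. Then two rectangles of one copy that meet a common rectangle of a transverse copy share a slot, so they do not meet each other. That is exactly what makes the $64$-rectangle family triangle-free.

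These two requirements pull hard against each other. With only singleton slot lists, some slot would carry two disjoint rectangles, which already breaks injectivity. So some rectangles need lists of several slots, and geometry forces these lists to be intervals.

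This is why the paper's gadget is not a union of crossing pairs. It is $F=C_5\sqcup K_2\sqcup K_1$ with lists $0,1,2,3,01,12,23,0123$, with the Hall property checked through \Cref{lem:hall-criterion}. The copies are placed by scaling with $M=L+1$, which realizes the crossing rule $H_i(R)\cap V_j(S)\ne\emptyset\iff j\in X(R),\ i\in X(S)$ exactly.

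With that gadget in place, the rest of your plan works:
\begin{itemize}
\item Your cell-assignment argument for $\nu\le 16$ is valid; it is a simpler version of \Cref{lem:repetition-hall}.
\item The lower bound $\nu\ge 16$ comes from the four disjoint horizontal copies, each containing an independent $4$-set.
\item The bound $\tau\ge 64/2$ follows from triangle-freeness as you intended.
\end{itemize}
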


More importantly, the same recursive construction extends to an infinite family
of rectangle systems. Iterating the horizontal--vertical composition produces
rectangle families $P_0,P_1,P_2,\ldots$ satisfying
$\nu(P_r)=4^{2^r}$. While only the first nontrivial member $P_1$ is
triangle-free, the slot structure continues to control independent sets at
every level. This explicit recursive structure is sufficiently rich to admit
recursive fractional solutions for the standard clique relaxation of Maximum
Independent Set of Rectangles. Already the second level yields a clique-LP
value exceeding $2\nu(P_2)$, while the third level improves the previous best
finite integrality gap.

\begin{theorem}[A finite LP gap]
\label{thm:p3-gap}
For the standard point relaxation, equivalently the clique relaxation, for
Maximum Independent Set of Rectangles,
\[
\frac{\alpha^*(P_3)}{\nu(P_3)}
\ge
\frac{73}{32}.
\]
\end{theorem}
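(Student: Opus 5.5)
We need to exhibit a feasible fractional solution $x$ for the clique LP on $P_3$ with value at least $\tfrac{73}{32}\nu(P_3)$. Since $\nu(P_3)=4^{8}$ is the upper-bound side and is supplied by the slot analysis of the recursive construction (the identity $\nu(P_r)=4^{2^r}$ stated above), the content of the theorem is the lower bound on $\alpha^*(P_3)$. Here $\alpha^*$ is the maximum of $\sum_R x_R$ subject to $\sum_{R\ni p}x_R\le 1$ for every point $p$. The plan is to build $x$ recursively along the horizontal--vertical composition $P_{r+1}$ from copies of $P_r$, and to track a small vector of parameters that describe how much load $x$ places near each of the four slots.

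\textbf{Base weights.} First I fix weights on the eight-rectangle gadget. The gadget has four slots, and every independent set injects into them, so $\nu=4$. I would look for weights, for instance $1/2$ on every rectangle, that give total $4\cdot\beta_0$ with $\beta_0>1$. The gadget's intersection pattern should have only pairwise overlaps with no deep cliques, which would make uniform weight $1/2$ feasible and give value $4$ on $8$ rectangles. In addition, I record the \emph{slot load} $\ell_s$ for each slot $s$: the maximum point coverage contributed near the boundary region where slot $s$ will later be crossed by rectangles of the orthogonal copies.

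\textbf{Composition step.} When four horizontal and four vertical copies are combined, each point in the plane lies in at most one horizontal and one vertical copy. Its coverage is therefore the sum of two loads. I would scale the inner copies by a factor $\lambda$ and, where useful, add weight on rectangles that belong to the crossing regions. The constraint to enforce is then of the form $\lambda(\ell_s^{H}+\ell_{s'}^{V})\le 1$ at each crossing.

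This gives a recursion: the value satisfies $V_{r+1}=8\lambda_r V_r$ up to the level-dependent correction, the load profile transforms linearly, and $\nu$ squares. The normalized ratio $\gamma_r=\alpha^*(P_r)/\nu(P_r)$ then evolves by an explicit map on a few parameters. I would iterate this map three times starting from the gadget data and verify that it yields $\gamma_3\ge 73/32$, i.e.\ a concrete total of $73\cdot 4^{8}/32$.

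\textbf{Main obstacle.} The hard step is feasibility at level $\ge 2$. Since $P_2$ is no longer triangle-free, high-depth points occur where rectangles from different recursion levels overlap simultaneously. The slot-load bookkeeping must therefore capture the worst-case depth across all levels, not just pairwise crossings.

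To handle this, I would first try to prove an invariant: every point of $P_r$ lies in the interior region of at most one copy at each level of the recursion. Under that invariant, the coverage at a point is a sum over levels of load terms, and the total can be bounded by a geometric-type series. After that, I would check that this bound stays at most $1$ for the specific weights chosen at levels $1,2,3$, doing the verification by a case split on which slots a point's neighborhood touches. Finally, I would confirm the exact arithmetic that produces $73/32$.
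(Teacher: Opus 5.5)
\paragraph{Verdict: genuine gap.} What you have is a template, not a proof. The whole content of the theorem is an explicit clique-feasible weighting on $P_3$ of total value $\tfrac{73}{32}\cdot 4^8$, and you never produce one: ``iterate this map three times and verify'' presupposes both the map and the weights.

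\paragraph{The mechanism you describe does not get above $2$.}
\begin{itemize}
\item There are $2q_r$ copies in $P_{r+1}$, that is $8,32,512$, not $8$.
\item Uniform weight $1/2$ on the gadget gives value $4=\nu(P_0)$, which is ratio $1$, not $\beta_0>1$.
\item There are no separate ``crossing-region'' rectangles to add weight to. The only freedom is which weighting each copy receives.
\item Suppose you place one vector $\lambda w$ in every copy of $P_r$. The crossing rule makes mixed cliques force $\lambda\le 1/(2\max_s X_w(s))$. The best ratio you can get is then $w(P_r)/(q_r\max_s X_w(s))$. This equals $2$ for weight $1/2$ on the triangle-free $P_1$. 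It is at most $32(h+v)/\bigl(16\max(h+v,2v)\bigr)\le 2$ for any weighting of $P_1$ that is constant ($h$, $v$) on the horizontal and vertical halves.
\end{itemize}

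\paragraph{The missing idea.} To beat $2$ you must put \emph{different} profiles in different copies at the intermediate level, according to the final digit of the copy index. Heavy profiles are paired against light ones so that each crossing spends its budget exactly. In the paper, the horizontal copies of $P_1$ inside $P_2$ receive $0,0,R,S$ and the vertical copies receive $U,U,V,V$. This yields a vector $A_2$ on $P_2$ of value $292=\tfrac{73}{64}q_2$ whose $x$-loads are all at most $1/2$. Only then is the uniform placement used: $A_2$ in all $512$ copies of $P_2$ inside $P_3$ gives $512\cdot 292/4^8=73/32$.

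\paragraph{Your feasibility bookkeeping targets the wrong invariant.} Your observation that a point meets at most one horizontal and one vertical copy is the right start. The usable form is this: every clique of $C(P)$ lies in one copy or in $H_i(P)\cup V_j(P)$. Its horizontal part lies in the old $x$-slot class of $j$, and its vertical part lies in the old $x$-slot class of $i$. So feasibility is exactly $X_{h_i}(j)+X_{v_j}(i)\le 1$ for each pair.

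There is no decay across levels to sum as a geometric series. Mixed cliques already have four rectangles in $P_2$ and eight in $P_3$.

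To reuse a vector one level up you need its \emph{new} $x$-loads. These depend on the $y$-loads and joint loads of the vertical profiles:
\[
X'(j,t)\le\max\bigl\{Y_{v_j}(t),\ \max_i\bigl(X_{h_i}(j)+Z_{v_j}(i,t)\bigr)\bigr\}.
\]
A single per-slot load vector $\ell_s$ transformed ``linearly'' does not carry this information. The update is of max-plus type and must track $X$, $Y$ and $Z$ separately. That is what lets the paper certify that $A_2$ has the half-load property.
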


Beyond improving the best known finite gap, the recursive structure admits a
complete asymptotic analysis. We construct a recursive family of fractional
solutions whose values converge to $5/2$, together with explicit piercing sets
showing that the same limit is attained by the packing--piercing ratio.

\begin{theorem}[The limiting value]
\label{thm:endpoint}
For the family $P_r$ constructed in \Cref{sec:crossbar},
\[
\lim_{r\to\infty}\frac{\alpha^*(P_r)}{\nu(P_r)}
=
\lim_{r\to\infty}\frac{\tau(P_r)}{\nu(P_r)}
=
\frac52.
\]
\end{theorem}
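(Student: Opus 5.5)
We prove the statement by sandwiching both ratios between a lower bound coming from explicit fractional solutions and an upper bound coming from explicit piercing sets. For every finite family,
\[
\nu(P_r)\le\alpha^*(P_r)\le\tau(P_r),
\]
where the second inequality holds because a piercing set of size $t$ is a feasible dual solution of size $t$ for the point (clique) relaxation. So it suffices to prove two things: $\limsup_r \tau(P_r)/\nu(P_r)\le 5/2$, and $\liminf_r \alpha^*(P_r)/\nu(P_r)\ge 5/2$. The value $\nu(P_r)=4^{2^r}$ is taken from the slot analysis of \Cref{sec:crossbar}, which we assume.

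\textbf{Upper bound.} We exhibit a piercing set of size exactly $\tfrac52\nu(P_r)$ and argue recursively. $P_{r+1}$ is built from horizontal and vertical copies of $P_r$ arranged in crossbar fashion, with $\nu(P_{r+1})=\nu(P_r)^2$. We fix the positions of the pierced points inside one copy, indexed by slots. The key fact is that each crossing region lets a single point pierce rectangles from a horizontal copy and a vertical copy simultaneously. We then count:
\[
\tau(P_{r+1})\le (\text{points shared at crossings})+(\text{points private to each copy}).
\]
By induction we aim to verify the identity $t_{r+1}=\tfrac52\nu(P_{r+1})$, with base case $t_0=\tfrac52\cdot 4=10$ for the eight-rectangle gadget. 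The gadget has four slots, so $\nu=4$, and we need an explicit set of $10$ points. If the exact identity fails, it is enough to show $t_r/\nu(P_r)\to 5/2$.

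\textbf{Lower bound.} We build fractional packings $x^{(r)}$ recursively. Given a feasible $x^{(r)}$ on $P_r$ with value $v_r$, we define weights on $P_{r+1}$ by scaling the weights of each horizontal copy and each vertical copy. We choose the scaling according to slot, so that every point of the plane is covered with total weight at most $1$. Checking feasibility means checking three kinds of points:
\begin{itemize}
\item points inside a single copy, where feasibility follows from the induction hypothesis together with the scaling;
\item points in a crossing region, where at most one slot-class from each direction is active, so the total is bounded by a sum of two scaled loads that we arrange to be at most $1$;
\item points outside all copies, which are trivially covered.
\end{itemize}
This yields a recurrence of the form $v_{r+1}/\nu(P_{r+1}) = f\bigl(v_r/\nu(P_r)\bigr)$ for an explicit affine or fractional-linear map $f$ with attracting fixed point $5/2$. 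For consistency, $f$ should reproduce the values already obtained: above $2$ at $r=2$ and $73/32$ at $r=3$, as in \Cref{thm:p3-gap}. We then solve the recurrence, showing for instance that $\tfrac52 - v_r/\nu(P_r)$ decays geometrically, and conclude that the ratio converges to $5/2$.

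\textbf{Main obstacle.} The hardest step is the feasibility check at crossing points in the fractional construction. We must know exactly which rectangles from perpendicular copies share a point, and the weights must be tuned per slot so that the crossing loads saturate but never exceed $1$. A uniform scaling would lose the improvement toward $5/2$. The first approach we would try is to make the load at every crossing equal to the load carried by that slot's boundary rectangles in the lower level. This turns the constraint into a linear condition on two parameters, and we would then optimize those two parameters to obtain the recurrence $f$.
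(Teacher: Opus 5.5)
Your sandwich $\nu\le\alpha^*\le\tau$ is correct, but there is a genuine gap in both halves: each stops exactly where the real work begins. \textbf{Upper bound.} You observe that a point in the crossing of $H_i$ and $V_j$ can serve a horizontal and a vertical rectangle at once. That is only the beginning; the real issue is making this sharing perfect. Suppose $H_i$ uses a marker-grid piercing set $S$ and $V_j$ uses $S'$. Then the cell $(i,j)$ receives horizontal choices $A_{i,j}=\{b:(j,b)\in S\}$ and vertical choices $B_{i,j}=\{t:(i,t)\in S'\}$, and one new grid point $((j,t),(i,b))$ realizes one of each. Realizing all choices costs $\sum_{i,j}\max(|A_{i,j}|,|B_{i,j}|)$ points. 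This equals $q_r|S|$, which preserves the ratio, only if $|A_{i,j}|=|B_{i,j}|$ in every cell; with no sharing you pay $2q_r|S|$ and the ratio doubles.

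The missing idea is an invariant that forces these counts to agree and reproduces itself. Take two grid patterns $S_r^0,S_r^1$ in which every marker $s$, in both directions, carries exactly $2+\mathbf 1_{\varepsilon_r(s)=\delta}$ points, for a balanced $\varepsilon_r$. Use pattern $\delta\oplus\varepsilon_r(i)$ in $H_i$ and $\delta\oplus\varepsilon_r(j)$ in $V_j$. Then both sides have $2+\mathbf 1_{\varepsilon_r(i)\oplus\varepsilon_r(j)=\delta}$ choices, any bijection pairs them, and $\varepsilon_{r+1}(j,t)=\varepsilon_r(j)\oplus\varepsilon_r(t)$ restores the invariant. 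This is also why the base case uses $10$ points although $\tau(P_0)=5$: the base sets must be marker-grid patterns with these row and column counts. You supply neither the invariant nor the base patterns, and your fallback ``it suffices that $t_r/\nu(P_r)\to5/2$'' has no argument behind it.

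\textbf{Lower bound.} A map $f$ acting only on the value of a single feasible packing cannot exist, because what a weighting yields at the next level depends on its slot loads, not its value. Let $w$ have normalized value $\rho$ and maximal $x$-load $\ell$. Rescaling $w$ to $x$-loads at most $\tfrac12$ and placing it uniformly in every copy gives ratio $\rho/\ell$. For the all-$\tfrac12$ packing on $P_1$ one has $\ell=1$ (e.g.\ $H_i(T)$ and $V_j(T)$ meet and both use the $x$-slot $(j,3)$), so nothing is gained. Now take a half-load $w$ and place $\lambda w$ in horizontal copies and $\mu w$ in vertical copies. With the generic bounds $Y_w\le1$, $Z_w\le\tfrac12$, the crossbar bounds certify the half-load property only when $\mu\le\tfrac12$ and $\lambda+\mu\le1$, so the normalized value cannot grow.

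The quantity to propagate is a half-load carrier whose normalized value tends to $5/4$. That is also its ceiling, by charging its weight to the $\tfrac52q_r$ grid points. The factor $2$ is collected only at the final crossbar step. Since $X'(V_j(R))=\{j\}\times Y(R)$, the new $x$-loads involve the $Y$- and joint $Z$-loads of the vertical profiles, so these must be tracked too. Growth also needs several profiles with asymmetric low/high class loads; in the paper this is the loop $M,R,C,D$, seeded by the five-cycle profile $C$ with $X=(1,0)$. Your ``two parameters tuned so crossings saturate'' does not capture this. Finally, the recursion need not reproduce $73/32$: the paper's recursive bound at $P_3$ is only $2m_2=5/8$, and $73/32$ comes from a separate finite weighting.
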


Finally, the limiting theorem yields a complete interpolation result. By
taking sufficiently many disjoint copies of a suitable recursive construction
and adjoining isolated rectangles, every rational value below $5/2$ can be
realized exactly.

\begin{theorem}[Interpolation below $5/2$]
\label{thm:interpolation}
Every rational number $t\in[1,5/2)$ occurs as
$\alpha^*(\R)/\nu(\R)$ for some finite family $\R$ of
axis-parallel rectangles. The same is true for the
packing--piercing ratio $\tau(\R)/\nu(\R)$.
\end{theorem}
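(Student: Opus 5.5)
The plan is to combine \Cref{thm:endpoint} with additivity of $\alpha^*$, $\nu$ and $\tau$ under disjoint unions. If $\R=\R_1\sqcup\R_2$ with the two subfamilies placed far apart (no rectangle of $\R_1$ meets one of $\R_2$), then $\nu(\R)=\nu(\R_1)+\nu(\R_2)$ and $\tau(\R)=\tau(\R_1)+\tau(\R_2)$. Since every clique of an intersection graph lies inside one component, the clique LP also splits, so $\alpha^*(\R)=\alpha^*(\R_1)+\alpha^*(\R_2)$. An isolated rectangle contributes exactly $1$ to each of the three quantities.

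Fix a rational $t=p/q\in[1,5/2)$. The case $t=1$ is a single rectangle, so assume $t>1$. By \Cref{thm:endpoint}, choose $r$ with $\alpha^*(P_r)/\nu(P_r)>t$. Write $\nu(P_r)=n$ and $\alpha^*(P_r)=a$; here $a$ is rational, being the optimum of a rational LP, and $a/n\in(t,5/2]$. Form $\R=k\cdot P_r\sqcup m$ isolated rectangles. Then
\[
\frac{\alpha^*(\R)}{\nu(\R)}=\frac{ka+m}{kn+m}.
\]
Setting this equal to $t$ gives $m=k(a-tn)/(t-1)$. This is positive because $a>tn$, and it is rational, so choosing $k$ to be a suitable multiple of the denominator makes $m$ a nonnegative integer. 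That handles the LP ratio.

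The piercing ratio needs the same argument with $\tau(P_r)$, which is an integer. The subtle point is that \Cref{thm:endpoint} only gives a limit, while we need $\tau(P_r)/\nu(P_r)>t$ for some $r$. The limit provides exactly this: since $\tau(P_r)/\nu(P_r)\to 5/2>t$, some $r$ has $\tau(P_r)/\nu(P_r)>t$. Then $m=k(\tau(P_r)-tn)/(t-1)$, which is a positive integer for suitable $k$.

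The main obstacle is the additivity of $\alpha^*$ and $\tau$, though both are routine. For $\tau$, a minimum piercing set can be split by components, because each point pierces rectangles from only one far-apart copy once the copies are translated sufficiently apart. For $\alpha^*$, one works with the point relaxation. Constraints coming from points in different copies involve disjoint sets of variables, so the LP separates. No upper bound at level $r$ is needed beyond the limit statement.
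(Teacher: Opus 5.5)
Your proposal is correct and follows essentially the same route as the paper: choose $P_r$ whose ratio exceeds $t$ via \Cref{thm:endpoint}, take separated copies together with isolated rectangles, use additivity of $\nu$, $\tau$ and $\alpha^*$, and solve $m=k(a-tn)/(t-1)$ for a positive integer by choosing $k$ appropriately. The case $t=1$ is handled in both by a family of pairwise disjoint rectangles.
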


\section{Related work}\label{sec:related-work}

The quantities $\nu$ and $\tau$ place Wegner's conjecture in the classical
packing-piercing setting initiated by Hadwiger and Debrunner
\cite{Hadwiger_1957}. The inequality proposed by Wegner \cite{Wegner_1965} is
a particularly sharp form of such a statement for axis-parallel rectangles.
Gyárfás and Lehel \cite{gyarfas1985coveringcoloring} later asked for a
constant bound on $\tau(\R)/\nu(\R)$ for rectangle families. Wegner's conjecture
would have answered this with the asymptotic constant 2.

The lower-bound side developed more slowly, and for a long time stayed below
this threshold. Fon-Der-Flaass and Kostochka \cite{FonDerFlaass_1993} showed
that no universal coefficient below $5/3$ can hold. A construction of
Jelínek, reported by Correa, Feuilloley, Pérez-Lantero, and Soto
\cite{correa2015independenthitting}, gives $\tau(\R)=2\nu(\R)-4$ for every
$\nu(\R)\ge 4$, so the leading coefficient 2 would have been sharp if
Wegner's inequality had held. Ajwani, Gajjala, Raman, and Ray
\cite{ajwani2026counterexamplewegnersconjectureaxisparallel} were the first
to cross 2, and their larger weighted construction also gives the finite
standard-LP benchmark $17891/8064>2.21$. The recursive family in this paper
pushes the lower side further for rectangles: its packing-piercing ratios
and clique-relaxation gaps both tend to $5/2$.

The best general upper bounds for rectangles are much larger. Károlyi
\cite{Karolyi_1991} proved that a family of axis-parallel boxes in
$\mathbb R^d$ satisfies an
$O(\nu(\log\nu)^{d-1})$ piercing bound; in the plane this gives
$O(\nu\log\nu)$ for rectangles. Correa, Feuilloley, Pérez-Lantero, and Soto
\cite{correa2015independenthitting} improved the rectangle bound to
$O(\nu(\log\log\nu)^2)$, using in part small $\epsilon$-net bounds of Aronov,
Ezra, and Sharir \cite{Aronov_2010} and the approximation framework of
Chalermsook and Chuzhoy \cite{chalermsook2009maximumindependentrectangles}
for maximum independent set of rectangles. The contrast with higher dimensions is sharp: 
Tomon~\cite{Tomon_2023} proved that axis-parallel boxes in dimensions $d\ge 3$ do
not admit any constant bound in terms of $\nu$ alone. At the same time, some
structured classes in the plane do have linear behavior:
Chudnovsky, Spirkl, and Zerbib \cite{chudnovsky2017piercingaxisparallelboxes}
showed, among other results, that bounded-aspect-ratio rectangles can be
pierced by $O(\nu)$ points.

Two nearby comparisons help locate the obstruction more precisely. On the
rectangle side, Correa et al. \cite{correa2015independenthitting} analyze the
case in which all rectangles meet a common diagonal, where the
packing-piercing gap is bounded between 2 and 4. Chen and Dumitrescu
\cite{Chen_2020} studied Wegner's inequality for axis-parallel rectangles
from another direction. These results help explain why the conjecture was
plausible: the obstruction has to be global, and cannot be seen from only a
local rectangle pattern.

A second comparison comes from axis-parallel segments. Caoduro, Cslovjecsek,
Pilipczuk, and Węgrzycki \cite{Caoduro_2022} proved that every triangle-free
intersection graph of $n$ axis-parallel segments has an independence number at
least $n/4+\Omega(\sqrt n)$, and they gave a matching construction up to the
constant in the $\sqrt n$ term. Thus, the triangle-free route has a built-in
slack for segments. The 64-rectangle family here reaches the exact $n/4$
threshold, which is the point at which the same counting argument disproves
Wegner's rectangle inequality.

A different connection is algorithmic. Rectangle families are central in
approximation algorithms for Maximum Independent Set of Rectangles:
Chalermsook and Chuzhoy \cite{chalermsook2009maximumindependentrectangles}
gave an $O(\log\log n)$-approximation framework. Later recursive partitioning
methods led to constant-factor approximations, including the work of Mitchell
\cite{Mitchell_2022} and the $(2+\epsilon)$-approximation of Gálvez, Khan,
Mari, Mömke, Reddy, and Wiese \cite{galvez2021twoepsilonmisr}. The standard
point relaxation for this problem is equivalent, for rectangle graphs, to the
clique relaxation used in this paper.

\section{Organization of the paper}\label{sec:organization}

\Cref{sec:overview} gives an informal overview of the techniques used in the paper. \Cref{sec:base-gadget} derives the eight-rectangle configuration from the
slot requirements. \Cref{sec:crossbar} defines the operation that repeats
horizontal and vertical copies, and proves \Cref{thm:counterexample}.
\Cref{sec:finite-lp-gaps,sec:recursive-lower}
develop the fractional solutions, first through finite weightings on $P_2$
and $P_3$, and then recursively. \Cref{sec:upper-bound} gives the matching
piercing sets of size $(5/2)\nu(P_r)$. \Cref{sec:interpolation} proves
\Cref{thm:interpolation}.

\section{Overview of the results}
\label{sec:overview}
 
This section sketches the ideas behind the four results, and can be read before
the technical sections. We first collect some of the required notions we will use going forward here, and in the technical section.

All rectangles in this paper are closed and axis-parallel. For a finite
family $\R$, its intersection graph $G(\R)$ has one vertex for each rectangle
and one edge for each intersecting pair. A pairwise disjoint subfamily of
rectangles is exactly an independent set in $G(\R)$, so
\[ \nu(\R)=\alpha(G(\R)). \]

Axis-parallel rectangles have the Helly property: if every two rectangles in
a finite subfamily intersect, then all rectangles in that subfamily share a
common point. Hence a point that pierces some rectangles corresponds to a
clique in $G(\R)$, and piercing $\R$ is the same as covering the vertices of
$G(\R)$ by cliques.

We will often use the following observation in this informal form. If
$G(\R)$ is triangle-free, then no point of the plane lies in three rectangles.
Therefore every piercing point meets at most two rectangles, and
$\tau(\R)\ge |\R|/2$. If, in addition, $\nu(\R)\le |\R|/4$, then
\[ \tau(\R)\ge |\R|/2\ge 2\nu(\R)>2\nu(\R)-1. \]
Thus a triangle-free rectangle graph with independence number at most one
quarter of its vertices gives a counterexample to Wegner's conjecture.

We also use the standard clique relaxation. For a graph $G$, let
\[
\alpha^*(G)=\max\left\{\sum_{v\in V(G)}x_v:
x_v\ge 0,\ \sum_{v\in K}x_v\le 1\text{ for every clique }K\right\}.
\]
For rectangle graphs this is the same as the point relaxation, since cliques
are exactly the subfamilies pierced by a single point. The integral optimum is
$\alpha(G)=\nu(\R)$. Every piercing set gives a clique cover, and summing the
LP inequalities over the cliques in such a cover gives
\[ \alpha^*(G(\R))\le \tau(\R). \]

\subsection[A 64-rectangle counterexample]{A $64$-rectangle counterexample (\Cref{thm:counterexample})}
 
We start with a counting strategy, and then realize a small number of rectangles that realize the strategy. Suppose a triangle-free rectangle family has $n$
rectangles and no independent set larger than $n/4$. Then $\nu\le n/4$, while
triangle-freeness gives $\tau\ge n/2$, so
\[
  \tau\;\ge\;n/2\;\ge\;2\nu\;>\;2\nu-1,
\]
contradicting Wegner's bound. Everything therefore reduces to building a
triangle-free family that meets the threshold $\nu \le n/4$ exactly.
 
The construction starts from a base object of eight rectangles (see~\Cref{fig:base-rectangles}), drawn against
four reference vertical lines and four reference horizontal lines (these lines
become the \emph{slots} of~\Cref{sec:base-gadget}). The eight rectangles are arranged so that every independent set among them can be matched one-to-one to the four vertical reference lines. This matching guarantees that the independence number of the base object is at four. From one base object we form four horizontal copies and four vertical copies, positioned by a scaling rule so that a horizontal copy meets a vertical copy exactly when their reference lines prescribe it. This doubling produces a triangle-free configuration of $8 \cdot 8 = 64$ rectangles and the matching property lifts to an upper bound of $4\times 4 = 16$ on the independence number (see~\Cref{fig:p1-rectangles}). Since $16 = 64/4$, the counting argument fires: $\nu = 16$ while $\tau \ge 32$, so $\tau > 2\nu - 1 = 31$ and Wegner's conjecture is violated.
 
\subsection[A finite clique-relaxation gap]{A finite clique-relaxation gap (\Cref{thm:p3-gap})}

The subsequent results all depend on the following observation. The clique relaxation value sits between the packing and
piercing numbers,
\[
  \nu(\mathcal R)\;\le\;\alpha^{*}(\mathcal R)\;\le\;\tau(\mathcal R),
\]
because it relaxes the integral packing from below and is bounded above by any
clique cover, in particular by any piercing set.
 
The clique relaxation assigns each rectangle a nonnegative weight so that the
weights on any clique, equivalently, any single piercing point, sum to at
most one; its value is the total weight. On a triangle-free family this relaxation is already visible: every clique is a single rectangle or an intersecting pair, so giving every rectangle weight $1/2$ is feasible, with value $n/2 = 2\nu$. In particular the $64$-rectangle example gives ratio exactly $2$, and no more. To pass $2$, one has to go deeper into the construction; and in doing so, we will lose this triangle-freeness and thus have to be more clever.
 
Repeating the horizontal-vertical doubling yields a family
$P_0, P_1, P_2, \dots$, where $P_1$ is the counterexample above and the packing
number $\nu(P_r)$ grows in a controlled way. The point of the doubling is that
feasibility stays \emph{local}: any clique either lives inside a single copy, or
inside one horizontal copy meeting one vertical copy. So a weighting is feasible as soon as each copy is feasible on its own and, at every crossing, the combined
weight visible through that crossing stays at most one. Designing weightings that
carry high value while keeping this crossing weight small is a small, hand-checkable balancing problem. Carried out at the third level, it gives
\[
  \frac{\alpha^{*}(P_3)}{\nu(P_3)} \;\ge\; \frac{73}{32},
\]
which exceeds the earlier benchmark $17891/8064$.
 
\subsection[The limiting ratio 5/2]{The limiting ratio $5/2$ (\Cref{thm:endpoint})}
 
The finite gaps are the start of a sequence that converges, and the limit is
pinned down from both sides.
 
For the lower side, the crossing condition has a clean reusable form: if a
weighting keeps the weight visible through every vertical reference line at most
$1/2$, then it can be dropped into every horizontal and every vertical copy at the next level and the crossings stay feasible automatically, since $1/2 + 1/2 = 1$. Among these reusable weightings, a recursive scheme builds ones whose normalized value climbs toward $5/4$. Reusing a value-$5/4$ weighting at the next level then doubles the ratio, so $\alpha^{*}(P_r)/\nu(P_r) \to 5/2$ from below.
 
For the upper side, the same reference lines supply piercing sets. Using the grid
of reference-line intersections, one can pierce $P_r$ with $(5/2)\,\nu(P_r)$
points, built recursively so that every reference line carries either two or three chosen points, with the extra point on exactly half of the lines in each
direction. This gives $\tau(P_r) \le (5/2)\,\nu(P_r)$. The same piercing sets also explain why $5/4$ is the ceiling for the reusable weightings: their weight can be charged to the $(5/2)\,\nu(P_r)$ grid points, each of which absorbs at most $1/2$.
 
Since $\nu \le \alpha^{*} \le \tau$, both ratios are now squeezed between the two
bounds, and
\[
  \lim_{r\to\infty}\frac{\alpha^{*}(P_r)}{\nu(P_r)}
  \;=\;
  \lim_{r\to\infty}\frac{\tau(P_r)}{\nu(P_r)}
  \;=\;\frac{5}{2}.
\]
 
\subsection[Interpolation below 5/2]{Interpolation below $5/2$ (\Cref{thm:interpolation})}
 
The last result is a tuning argument. A single isolated rectangle has packing,
piercing, and relaxation value all equal to one, so its ratio is exactly $1$. When two families are placed far apart in the plane, all three quantities add, because disjoint packings combine, no point pierces both regions, and the relaxation splits into independent programs. These two observations let us dial any ratio downward.
 
Fix a rational target $t \in [1, 5/2)$. By the previous result some member $P_r$
has ratio above $t$; write $a = \nu(P_r)$ and $b$ for the relevant value on $P_r$. Taking $M$ separated copies of $P_r$ and adding $N$ isolated rectangles produces a family with ratio $(Mb + N)/(Ma + N)$. Solving for $N$ gives a positive rational multiple of $M$, so choosing $M$ large enough makes $N$ a positive integer and the ratio equal to $t$ exactly. The endpoint $t = 1$ is any nonempty family of pairwise disjoint rectangles. Running the argument once for the relaxation and once for the piercing number realizes every rational in $[1, 5/2)$ as each of the two ratios.

\section{The eight-rectangle base}\label{sec:base-gadget}

We now build the eight-vertex graph $F$ underlying the base drawing $P_0$.
For the moment, a slot is just one of the labels $0,1,\ldots,q-1$. In the
rectangle drawing these labels will correspond to distinguished vertical or
horizontal lines.

The numerical target comes from the first repetition. If the base object has
$m$ vertices and $q$ slots, then later we will take $q$ horizontal copies and
$q$ vertical copies, giving $2qm$ rectangles. The board used to bound
independent sets has only $q^2$ positions. To make this board one quarter of
the total size, we need $q^2=2qm/4$, or $m=2q$.

Thus we want $2q$ vertices whose independent sets can be assigned injectively
to $q$ allowed slots. There is also a geometric restriction: vertices sharing
a vertical slot must be nonadjacent, or the first horizontal-vertical
arrangement could create a triangle. In formulas, for the first list system
we require
\[ X(u)\cap X(v)\ne\emptyset \quad\Longrightarrow\quad uv\notin E(F). \]
Thus only pairs with disjoint lists are eligible to become edges.

The slots have a fixed order in the eventual drawing. Since a rectangle that
contains two distinguished vertical lines also contains every distinguished
vertical line between them, its $X$-list must be an interval of slots. The
simplest interval family of the right size consists of the singleton intervals,
the adjacent two-slot intervals, and the full interval. Its size is
$q+(q-1)+1=2q$, exactly the target above. The cases $q\le 3$ fail quickly.
For $q=1$, the two lists are
both $0$, so their vertices must be nonadjacent and cannot be injected into
one slot. For $q=2$, the lists are $0,1,01,01$; a vertex with list $0$
together with the two vertices with list $01$ gives three pairwise nonadjacent
vertices using only two slots. For $q=3$, the four lists $1,01,12,012$ all
pairwise intersect, so their vertices must be pairwise nonadjacent; again
four vertices cannot be injected into three slots.

The first useful case is therefore $q=4$, where the eight lists are
\[ 0,\ 1,\ 2,\ 3,\ 01,\ 12,\ 23,\ 0123. \]
We draw these lists in \Cref{fig:base-lists}. Intersecting intervals are
forbidden to be edges; disjoint intervals can be used as edges if they are
needed to control independent sets.

\begin{figure}[H]
\centering
\begin{tikzpicture}[
dot/.style={circle,fill=black,inner sep=1.8pt},
label/.style={font=\small},
listline/.style={line width=1.1pt,black},
slotlabel/.style={font=\small},
header/.style={font=\small}
]
\draw[gray!65] (-2.4,0.95) -- (-0.35,0.05);
\node[header] at (-0.95,0.72) {slots};
\node[header] at (-1.78,0.25) {vertices};
\foreach \x/\lab in {0.55/0,2.05/1,3.55/2,5.05/3}
  \node[slotlabel] at (\x,0.35) {$\lab$};

\foreach \name/\y/\a/\b in {
  {$C_0$}/-0.45/0.55/0.55,
  {$C_1$}/-1.2/2.05/2.05,
  {$C_2$}/-1.95/3.55/3.55,
  {$C_3$}/-2.7/5.05/5.05,
  {$U_L$}/-3.7/0.55/2.05,
  {$Q$}/-4.45/2.05/3.55,
  {$U_R$}/-5.2/3.55/5.05,
  {$T$}/-6.2/0.55/5.05
} {
  \node[label,anchor=east] at (-0.45,\y) {\name};
  \draw[listline] (\a,\y) -- (\b,\y);
  \node[dot] at (\a,\y) {};
  \node[dot] at (\b,\y) {};
}
\end{tikzpicture}
\caption{The first list system. Intersecting intervals cannot be joined by an
edge; disjoint intervals can be joined if needed to control independent sets.}
\label{fig:base-lists}
\end{figure}

We write $[q]=\{0,1,\ldots,q-1\}$. If $F$ is a graph and each vertex $v$ has
a nonempty list $X(v)\subseteq [q]$, we say that $(F,X)$ has the Hall property
if every independent set $I$ of $F$ admits an injective map
$\phi:I\to [q]$ with $\phi(v)\in X(v)$ for every $v\in I$. For
$A\subseteq [q]$, let $F_X[A]$ be the subgraph induced by the vertices whose
lists are contained in $A$.

\begin{lemma}\label{lem:hall-criterion}
The pair $(F,X)$ has the Hall property if and only if
$\alpha(F_X[A])\le |A|$ for every $A\subseteq [q]$.
\end{lemma}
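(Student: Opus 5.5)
The plan is to reduce the statement to Hall's marriage theorem, applied one independent set at a time.

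\emph{Necessity.} Suppose $(F,X)$ has the Hall property and fix $A\subseteq[q]$. Let $I$ be a maximum independent set of $F_X[A]$. Since $F_X[A]$ is an induced subgraph of $F$, the set $I$ is also independent in $F$. The Hall property therefore gives an injective $\phi:I\to[q]$ with $\phi(v)\in X(v)$ for each $v\in I$. Every $v\in I$ has $X(v)\subseteq A$, so $\phi$ maps $I$ injectively into $A$. Hence $\alpha(F_X[A])=|I|\le|A|$.

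\emph{Sufficiency.} Assume $\alpha(F_X[A])\le|A|$ for all $A\subseteq[q]$, and let $I$ be an independent set of $F$. Consider the bipartite graph between $I$ and $[q]$ in which $v$ is joined to each slot of $X(v)$. A map of the required kind is exactly a matching that saturates $I$. By Hall's theorem, it suffices to check that $|N(S)|\ge|S|$ for every $S\subseteq I$, where $N(S)=\bigcup_{v\in S}X(v)$.

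\emph{Checking Hall's condition.} Fix $S\subseteq I$ and put $A=N(S)$. Every $v\in S$ satisfies $X(v)\subseteq A$, so $S$ is a vertex set of $F_X[A]$. As a subset of $I$, it is independent there. Therefore $|S|\le\alpha(F_X[A])\le|A|=|N(S)|$, which is Hall's condition. When $S=\emptyset$ the inequality is trivial, and the hypothesis that lists are nonempty is not even needed.

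I do not expect a genuine obstacle. The only point needing care is the choice $A=N(S)$ in the sufficiency direction: $F_X[A]$ is defined through list containment, and it is this containment that makes every vertex of $S$ a vertex of $F_X[A]$.
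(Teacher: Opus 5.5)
Your proof is correct and follows essentially the same route as the paper: sufficiency by verifying Hall's condition with $A=\bigcup_{v\in S}X(v)$, so that $S$ is independent in $F_X[A]$, and necessity by noting that the representatives of an independent set of $F_X[A]$ must land in $A$. Your side remark also holds: the hypothesis at $A=\emptyset$ already forces every list to be nonempty.
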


\begin{proof}
Assume first that $\alpha(F_X[A])\le |A|$ for every $A\subseteq[q]$. Let $I$
be an independent set, and let $J\subseteq I$. Put
$S_J=\bigcup_{v\in J}X(v)$. Then $J$ is an independent set in $F_X[S_J]$, so
$|J|\le |S_J|$. This is exactly Hall's condition for choosing distinct
representatives from the lists $X(v)$, $v\in I$.

Conversely, assume that $(F,X)$ has the Hall property. Any independent set in
$F_X[A]$ has all its lists contained in $A$, so its chosen representatives lie
in $A$. Therefore it has size at most $|A|$.
\end{proof}

We now choose the graph. Name the vertices according to their first list
system:
\[
\begin{array}{c|cccccccc}
v&T&U_L&U_R&C_0&C_1&C_2&C_3&Q\\ \hline
X(v)&0123&01&23&0&1&2&3&12.
\end{array}
\]
As explained above, edges may be added only between vertices with disjoint
lists.

The Hall condition tells us where edges are needed. Looking only at slots
$0,1$, the vertices $C_0,C_1,U_L$ are present. Since $U_L$ shares a slot with
both $C_0$ and $C_1$, it cannot be adjacent to either. Therefore we add the
edge $C_0C_1$; otherwise these three vertices would form an independent set
using only two slots. The same argument for slots $1,2$ and $2,3$ forces
$C_1C_2$ and $C_2C_3$.

Next look at slots $0,1,2$. The vertices $C_0,C_2,U_L,Q$ would otherwise be
independent, so we need one more edge among pairs with disjoint lists. We add
$C_0Q$. Similarly, looking at slots $1,2,3$, we add $C_3Q$. These two choices
close the five-cycle $C_0-C_1-C_2-C_3-Q-C_0$, while still respecting the rule
that edges use disjoint lists.

Finally, $T$ uses every slot, so it is isolated. The five-cycle has
independence number two. Thus, without an edge between $U_L$ and $U_R$, we
could take $T$, both vertices $U_L,U_R$, and an independent pair from the
cycle. That would give an independent set of size five.

A single edge from one of $U_L,U_R$ to the cycle does not remove this
obstruction. Such an edge rules out only those independent pairs using one
specified cycle vertex, while the five-cycle still has an independent pair
avoiding that vertex. Therefore, if $U_L$ and $U_R$ are left nonadjacent, one
would need further edges to meet all independent pairs of the cycle. Since the
lists $01$ and $23$ are disjoint, the single edge $U_LU_R$ removes the
obstruction at once.

The resulting graph is
\[ F=C_5\sqcup K_2\sqcup K_1, \]
where the five-cycle is $C_0-C_1-C_2-C_3-Q-C_0$, the edge is $U_LU_R$, and
the isolated vertex is $T$. The largest independent sets in these three
components have sizes $2,1,1$, adding exactly to the four slots available. The
graph is shown in \Cref{fig:base-graph}.

\begin{figure}[H]
\centering
\begin{tikzpicture}[
vertex/.style={circle,draw,thick,fill=white,minimum size=8mm,inner sep=1pt,font=\small},
cyclev/.style={vertex,fill=blue!8},
pairv/.style={vertex,fill=green!10},
singlev/.style={vertex,fill=gray!10},
edge/.style={thick}
]
\node[cyclev] (C0) at (90:1.35) {$C_0$};
\node[cyclev] (C1) at (18:1.35) {$C_1$};
\node[cyclev] (C2) at (-54:1.35) {$C_2$};
\node[cyclev] (C3) at (-126:1.35) {$C_3$};
\node[cyclev] (Q) at (162:1.35) {$Q$};
\draw[edge] (C0)--(C1)--(C2)--(C3)--(Q)--(C0);

\node[pairv] (UL) at (3.3,0.45) {$U_L$};
\node[pairv] (UR) at (4.55,0.45) {$U_R$};
\draw[edge] (UL)--(UR);

\node[singlev] (T) at (3.93,-0.9) {$T$};
\end{tikzpicture}
\caption{The graph $F=C_5\sqcup K_2\sqcup K_1$.}
\label{fig:base-graph}
\end{figure}

\begin{lemma}\label{lem:base-x}
The pair $(F,X)$ has the Hall property, and vertices with a common $X$-slot
are nonadjacent.
\end{lemma}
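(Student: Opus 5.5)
The second claim, that vertices sharing a slot are nonadjacent, is checked first and is a direct inspection. The only edges of $F=C_5\sqcup K_2\sqcup K_1$ are the cycle edges $C_0C_1$, $C_1C_2$, $C_2C_3$, $C_3Q$, $QC_0$ and the edge $U_LU_R$. Their list pairs are $\{0\},\{1\}$; $\{1\},\{2\}$; $\{2\},\{3\}$; $\{3\},\{1,2\}$; $\{1,2\},\{0\}$; and $\{0,1\},\{2,3\}$. Each pair is disjoint, so every edge joins vertices with disjoint lists. Contrapositively, vertices with a common slot are nonadjacent.

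For the Hall property I will use \Cref{lem:hall-criterion}, which reduces the claim to checking $\alpha(F_X[A])\le |A|$ for every $A\subseteq[4]$. I will organise the check by $|A|$, since only lists contained in $A$ contribute vertices.

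If $A$ is not an interval, I split it into its maximal intervals $A_1,\dots,A_k$. Every list is an interval, so each vertex of $F_X[A]$ has its list inside a single $A_i$, and hence $\alpha(F_X[A])\le\sum_i\alpha(F_X[A_i])$. It is therefore enough to treat intervals.

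\begin{itemize}
\item For $|A|=1$, $F_X[A]=\{C_i\}$ and $\alpha=1$.
\item For $A=\{i,i+1\}$, the vertices are $C_i$, $C_{i+1}$ and, where the interval list $\{i,i+1\}$ exists, $U_L$, $Q$ or $U_R$. The edge $C_iC_{i+1}$ leaves $\alpha=2$.
\item For $A=\{0,1,2\}$, the vertex set is $C_0,C_1,C_2,U_L,Q$. The cycle part is the path $Q-C_0-C_1-C_2$, whose independence number is $2$. Since $U_L$ is the only vertex outside this path, $\alpha\le 3$.
\item For $A=\{1,2,3\}$, the same argument applies, with the path $C_1-C_2-C_3-Q$ together with $U_R$.
\item For $A=[4]$, $\alpha(F)=\alpha(C_5)+\alpha(K_2)+\alpha(K_1)=2+1+1=4$.
\end{itemize}

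I expect no real obstacle here. The one point needing care is the reduction from non-interval $A$ to its interval components, which relies on every list being an interval. Alternatively, I could list all sixteen subsets $A$ explicitly, since each case is immediate.
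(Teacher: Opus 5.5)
Your proof is correct and follows essentially the same route as the paper: the nonadjacency claim is read off from the fact that every edge joins disjoint lists, and the Hall property is checked through \Cref{lem:hall-criterion} by going through the subsets $A$. The only difference is organizational. You handle non-interval $A$ by splitting it into maximal runs, which is valid because every list is an interval. The paper instead lists all six $2$-subsets explicitly, and handles every $3$-subset at once by noting that $T$ is absent, the five-cycle contributes at most $2$, and at most one of $U_L,U_R$ is present.
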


\begin{proof}
The second statement holds by construction: every edge was added only between
vertices with disjoint $X$-lists.

We verify the Hall property using \Cref{lem:hall-criterion}. If $|A|\le 1$,
then $F_X[A]$ contains at most one vertex. If $|A|=2$, the possible induced
subgraphs are
\[
\begin{array}{c|c}
A&F_X[A]\\ \hline
01&C_0C_1\sqcup U_L\\
12&C_1C_2\sqcup Q\\
23&C_2C_3\sqcup U_R\\
02&C_0\sqcup C_2\\
03&C_0\sqcup C_3\\
13&C_1\sqcup C_3.
\end{array}
\]
Each has independence number at most two. If $|A|=3$, the vertex $T$ is not
present; the available part of the five-cycle has independence number at most
two, and at most one of $U_L,U_R$ is present. Thus the independence number is
at most three. Finally,
$\alpha(F)=\alpha(C_5)+\alpha(K_2)+\alpha(K_1)=2+1+1=4$.
\end{proof}

The first list system is enough to control the crossings in the
64-rectangle family, but it does not record all the information needed for the
next round. When a copy is made after swapping the two coordinates, the old
horizontal guide lines become vertical guide lines for that copy. Thus the
base drawing should also record which horizontal guide lines pass through each
rectangle. We encode this information by a second list system $Y$.

The requirements on $Y$ are gentler than the requirements on $X$. For $X$,
sharing a slot had to force nonadjacency, because $X$ controls the mixed
intersections in the first horizontal-vertical arrangement. For $Y$, we only
ask that independent sets can choose distinct horizontal slots. Adjacent
vertices may therefore share a horizontal slot, because they never have to be
assigned slots at the same time.

This turns the choice of $Y$ into a small assignment problem: how should four
horizontal slots be spent on the three components of $F$? A largest independent
set uses at most one vertex from the isolated component, at most one endpoint
of $U_LU_R$, and at most two vertices from the five-cycle. This suggests one
slot for $T$, one shared slot for $U_L,U_R$, and two slots for the five-cycle.
Put $T$ on slot $3$, and put both $U_L$ and $U_R$ on slot $2$.

It remains to place the five-cycle on slots $0$ and $1$. If every cycle vertex
were forced to a single one of these two slots, then some nonadjacent pair
would be forced to share a slot. We instead leave one cycle vertex flexible.
For the present labeling, put the adjacent pair $C_0,Q$ on slot $0$ and the
adjacent pair $C_1,C_2$ on slot $1$. The remaining vertex $C_3$ is adjacent to
one vertex in each pair, namely $Q$ and $C_2$. If $C_3$ is selected and both
slots $0$ and $1$ are already occupied, then the other selected cycle vertices
must be $C_0$ and $C_1$, but those two are adjacent. Thus giving $C_3$ the
choice $01$ lets it avoid whichever slot is already used.

Thus the second list system is
\[
\begin{array}{c|cccccccc}
v&T&U_L&U_R&C_0&C_1&C_2&C_3&Q\\ \hline
Y(v)&3&2&2&0&1&1&01&0.
\end{array}
\]

\begin{lemma}\label{lem:base-y}
The pair $(F,Y)$ has the Hall property.
\end{lemma}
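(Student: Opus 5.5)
I would prove the Hall property directly, slot by slot, rather than through \Cref{lem:hall-criterion}. The plan is to exploit the component structure $F=C_5\sqcup K_2\sqcup K_1$, whose parts have been given disjoint sets of horizontal slots. The isolated vertex $T$ has $Y(T)=\{3\}$. Both $U_L$ and $U_R$ have list $\{2\}$. Every cycle vertex has its list inside $\{0,1\}$.

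Fix an independent set $I$ of $F$. Because $U_LU_R$ is an edge, $I$ contains at most one of $U_L,U_R$, and we send it to slot $2$. If $T\in I$, we send $T$ to slot $3$. Since the three groups of lists are pairwise disjoint as slot sets, any conflict can only come from the cycle. So it remains to inject $I\cap C_5$ into $\{0,1\}$, respecting the lists $Y(C_0)=Y(Q)=\{0\}$, $Y(C_1)=Y(C_2)=\{1\}$ and $Y(C_3)=\{0,1\}$.

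We have $|I\cap C_5|\le\alpha(C_5)=2$, so the cycle part of $I$ is empty, a single vertex, or a nonadjacent pair. A single vertex can always be sent to any slot in its nonempty list. For a pair, I would list the five nonadjacent pairs of the cycle $C_0-C_1-C_2-C_3-Q-C_0$: these are $C_0C_2$, $C_0C_3$, $C_1C_3$, $C_1Q$ and $C_2Q$. For each pair I would exhibit the assignment:
\[
C_0\mapsto0,\ C_2\mapsto1;\quad C_0\mapsto0,\ C_3\mapsto1;\quad C_1\mapsto1,\ C_3\mapsto0;\quad Q\mapsto0,\ C_1\mapsto1;\quad Q\mapsto0,\ C_2\mapsto1.
\]
The underlying point is the one already noted in the text. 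The two slot-$0$ vertices $C_0,Q$ are adjacent, and so are the two slot-$1$ vertices $C_1,C_2$. Hence no independent pair is forced onto a single slot, and $C_3$ is flexible enough to take whichever slot its partner leaves free.

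Combining the three parts gives the required injection $\phi:I\to[4]$ with $\phi(v)\in Y(v)$. The only real care is to check that the five pairs listed are exactly the non-edges of $C_5$, which is routine, so I expect no genuine obstacle. As a sanity check one could instead verify $\alpha(F_Y[A])\le|A|$ via \Cref{lem:hall-criterion}. For example, for $A=\{0\}$ the subgraph $F_Y[A]$ consists of the adjacent pair $C_0,Q$, and for $A=\{1\}$ it is the edge $C_1C_2$. Both have independence number $1$, which matches the direct argument.
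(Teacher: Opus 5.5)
Your proof is correct and follows the paper's argument. You send $T$ to slot $3$ and the at most one vertex of $U_LU_R$ to slot $2$, then inject the at most two cycle vertices into $\{0,1\}$, using that $C_0,Q$ and $C_1,C_2$ are adjacent pairs while $C_3$ is flexible. The only difference is that you enumerate the five non-edges of the five-cycle explicitly (and correctly), whereas the paper handles $C_3$ with a short case remark.
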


\begin{proof}
Let $I$ be an independent set. If $T\in I$, assign it to slot $3$. Since
$U_LU_R$ is an edge, at most one of $U_L,U_R$ lies in $I$; assign it to
slot $2$ if present. The remaining vertices of $I$ lie on the five-cycle, so
there are at most two of them. The vertices forced to slot $0$, namely
$C_0,Q$, are adjacent, and the vertices forced to slot $1$, namely $C_1,C_2$,
are adjacent. The only vertex with a choice is $C_3$, whose list is $01$.
If $C_3$ is present and another selected cycle vertex has already used one of
slots $0,1$, assign $C_3$ to the other slot; if $C_3$ is the only selected
cycle vertex, assign it to either slot. This gives an injective assignment for
every independent set.
\end{proof}

\begin{proposition}\label{prop:base-realization}
The graph $F$ has a rectangle realization with four distinguished vertical
lines and four distinguished horizontal lines such that the lines passing
through each rectangle are exactly the lists $X$ and $Y$.
\end{proposition}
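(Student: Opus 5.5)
The plan is to write down explicit coordinates and then check all $\binom{8}{2}=28$ pairs directly. Put the vertical slot lines at $x=0,1,2,3$ and the horizontal slot lines at $y=0,1,2,3$. Each rectangle $[a,b]\times[c,d]$ will have its endpoints at non-integer positions, so the set of slot lines it meets is just the set of integers in $[a,b]$ (respectively $[c,d]$). Two closed axis-parallel rectangles intersect if and only if both their $x$-projections and their $y$-projections intersect. So the proof reduces to checking interval overlaps coordinate by coordinate. The guiding idea is to separate the three components $C_5$, $K_2$, $K_1$ vertically using $Y$, and then to realize the edges inside each component by rectangles that touch along a boundary.

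Concretely, I would take
\[
\begin{array}{c|c|c}
v & x\text{-range} & y\text{-range}\\ \hline
T & [-0.5,3.5] & [2.6,3.4]\\
U_L & [-0.4,1.5] & [1.6,2.4]\\
U_R & [1.5,3.4] & [1.6,2.4]\\
C_0 & [-0.5,0.6] & [-0.4,0.6]\\
C_1 & [0.5,1.5] & [0.6,1.4]\\
C_2 & [1.5,2.5] & [0.6,1.4]\\
C_3 & [2.5,3.4] & [-0.4,1.4]\\
Q & [0.6,2.5] & [-0.4,0.4]
\end{array}
\]

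The first step is to read off from the table that the integers contained in each range are exactly the lists $X(v)$ and $Y(v)$ of \Cref{lem:base-x} and \Cref{lem:base-y}. For example, $[0.6,2.5]\cap\mathbb Z=\{1,2\}=X(Q)$ and $[-0.4,1.4]\cap\mathbb Z=\{0,1\}=Y(C_3)$.

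The second step checks non-adjacency between components using only the $y$-ranges.
\begin{itemize}
\item The $y$-range of $T$ lies in $[2.6,3.4]$, which is disjoint from every other $y$-range, so $T$ is isolated.
\item The $y$-ranges of $U_L,U_R$ lie in $[1.6,2.4]$, while every cycle rectangle has $y$-range in $[-0.4,1.4]$, so no rectangle of the pair meets the cycle.
\item Inside the pair, $U_L$ and $U_R$ share the $y$-range and their $x$-ranges touch at $x=1.5$. This gives the edge $U_LU_R$.
\end{itemize}

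The third step treats the five-cycle, which is the only part that needs care.
\begin{itemize}
\item \emph{Edges.} Each of the five edges $C_0C_1$, $C_1C_2$, $C_2C_3$, $C_3Q$, $QC_0$ is realized by touching $x$-ranges (at $0.5$–$0.6$, $1.5$, $2.5$, $2.5$, $0.6$ respectively) together with overlapping $y$-ranges.
\item \emph{Non-edges $C_1Q$, $C_2Q$.} These pairs share an $X$-slot, so they must be separated in $y$. They are: $[0.6,1.4]$ is disjoint from $[-0.4,0.4]$.
\item \emph{Non-edges $C_0C_2$, $C_0C_3$, $C_1C_3$.} These pairs are separated in $x$.
\end{itemize}

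The main obstacle is exactly this tension inside the cycle. $Q$ must reach both $C_0$ and $C_3$ horizontally, yet avoid $C_1$ and $C_2$, whose $x$-ranges it overlaps. Likewise $C_0$ must meet both $C_1$ and $Q$ without meeting anything else. The way through is to place $Q$ low, in $y$-range $[-0.4,0.4]$, and $C_1,C_2$ high, in $y$-range $[0.6,1.4]$. Then $C_0$ and $C_3$ are stretched vertically so that they bridge the two levels: $C_0$ up to $0.6$, and $C_3$ over both levels.

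Once the table is verified pair by pair, the intersection graph is exactly $F=C_5\sqcup K_2\sqcup K_1$, with the slot lines meeting each rectangle giving $X$ and $Y$. This proves the proposition.
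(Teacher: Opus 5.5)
Your proof is correct and is essentially the paper's argument: give explicit coordinates, read the $X$- and $Y$-lists off the marker lines each interval contains, and check all $28$ pairs coordinatewise. Your table does give exactly $C_5\sqcup K_2\sqcup K_1$ with the right lists. One small slip in the description: for $C_0C_1$ it is the $y$-ranges that meet only at $0.6$, while the $x$-ranges overlap on $[0.5,0.6]$; the pair still intersects, so nothing changes.

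The paper's proof also records, for later use in the crossing rule, that every marker line lies at least one unit inside or outside each interval, with the drawing inside $[0,L]^2$. Every endpoint in your table is at distance at least $0.4$ from an integer, so translating and scaling by $5/2$ gives this property as well.
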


\begin{proof}
Work in the square $[0,30]^2$. Use the following eight rectangles, drawn in
\Cref{fig:base-rectangles}:

\begin{figure}[H]
\centering
\begin{tikzpicture}[scale=0.21, every node/.style={font=\scriptsize}]
\filldraw[fill=purple!20,draw=purple!70!black,thick,fill opacity=.55] (0,26) rectangle (30,28);
\node at (15,27) {$T$};

\filldraw[fill=green!25,draw=green!45!black,thick,fill opacity=.55] (0,20) rectangle (16,22);
\node at (8,21) {$U_L$};
\filldraw[fill=green!25,draw=green!45!black,thick,fill opacity=.55] (14,20) rectangle (30,22);
\node at (22,21) {$U_R$};

\filldraw[fill=blue!18,draw=blue!65!black,thick,fill opacity=.5] (0,2) rectangle (8,10);
\node at (4,6) {$C_0$};
\filldraw[fill=blue!18,draw=blue!65!black,thick,fill opacity=.5] (6,8) rectangle (16,14);
\node at (11,11) {$C_1$};
\filldraw[fill=blue!18,draw=blue!65!black,thick,fill opacity=.5] (14,8) rectangle (24,14);
\node at (19,11) {$C_2$};
\filldraw[fill=blue!18,draw=blue!65!black,thick,fill opacity=.5] (22,2) rectangle (30,12);
\node at (26,7) {$C_3$};
\filldraw[fill=orange!25,draw=orange!70!black,thick,fill opacity=.55] (6,0) rectangle (24,4);
\node at (15,2) {$Q$};

\draw[black,thick] (0,0) rectangle (30,30);

\foreach \x/\lab in {3/0,11/1,19/2,27/3} {
  \draw[dashed,gray!70] (\x,0) -- (\x,30);
  \node[below] at (\x,0) {$\lab$};
}
\foreach \y/\lab in {3/0,11/1,21/2,27/3} {
  \draw[dashed,gray!70] (0,\y) -- (30,\y);
  \node[left] at (0,\y) {$\lab$};
}
\end{tikzpicture}
\caption{The eight rectangles realizing $F$. The dashed vertical lines give
the $X$-lists, and the dashed horizontal lines give the $Y$-lists.}
\label{fig:base-rectangles}
\end{figure}

\[
\begin{array}{c|c|c}
 &x\text{-interval}&y\text{-interval}\\ \hline
T   &[0,30] &[26,28]\\
U_L &[0,16] &[20,22]\\
U_R &[14,30]&[20,22]\\
C_0 &[0,8]  &[2,10]\\
C_1 &[6,16] &[8,14]\\
C_2 &[14,24]&[8,14]\\
C_3 &[22,30]&[2,12]\\
Q   &[6,24] &[0,4].
\end{array}
\]
Let the distinguished vertical lines be $\xi=(3,11,19,27)$ and the
distinguished horizontal lines be $\eta=(3,11,21,27)$. Reading off containment
in the displayed intervals gives exactly the lists $X$ and $Y$.
We shall also use the following small separation fact: every distinguished
line is either at least one unit inside the relevant interval, or at least one
unit outside it. This is immediate from the table.

Checking interval overlap in both coordinates, the intersecting pairs are
$U_LU_R$ and
\[ C_0C_1,\ C_1C_2,\ C_2C_3,\ C_3Q,\ QC_0. \]
These are precisely the edges of $F$.
\end{proof}

\section{Repeating horizontal and vertical copies}\label{sec:crossbar}

The eight-rectangle drawing has two kinds of information. The rectangles
themselves give the graph $F$, while the distinguished lines give the two list
systems $X$ and $Y$. We now turn the preceding design into coordinates: one
drawing with such data produces a larger drawing with the same kind of data.

Suppose a rectangle family $P$ is drawn inside $[0,L]^2$ with $q$
distinguished vertical lines
$\xi_0<\xi_1<\cdots<\xi_{q-1}$ and $q$ distinguished horizontal lines
$\eta_0<\eta_1<\cdots<\eta_{q-1}$. The list $X(R)$ records the vertical
lines contained in $R$, and $Y(R)$ records the horizontal lines contained in
$R$. We assume both list systems have the Hall property. We also assume the
same separation used in \Cref{prop:base-realization}: each marker line is at
least one unit inside or at least one unit outside every relevant interval,
and consecutive marker lines are at least one unit apart in each direction.

Set $M=L+1$. For a rectangle $R=[x_1,x_2]\times[y_1,y_2]$ in $P$, define a
horizontal copy in row $i$ by
\[ H_i(R)=[Mx_1,Mx_2]\times[M\xi_i+y_1,M\xi_i+y_2], \]
and define a vertical copy in column $j$ by
\[ V_j(R)=[M\xi_j+y_1,M\xi_j+y_2]\times[Mx_1,Mx_2]. \]
The vertical copy is a transposed copy of $R$. Let $C(P)$ be the family of all
rectangles $H_i(R)$ and $V_j(R)$, where $R\in P$ and $i,j\in[q]$. We call
this the crossbar operation. All these rectangles lie in $[0,L']^2$, where
$L'=ML+L$. The same spacing makes the row blocks pairwise disjoint and the
column blocks pairwise disjoint, so distinct horizontal copies do not meet one
another, and distinct vertical copies do not meet one another. Thus $C(P)$ has
$2q|P|$ rectangles.

For the base drawing, the first crossbar $P_1=C(P_0)$ is the 64-rectangle
family shown in \Cref{fig:p1-rectangles}.

\begin{figure}[H]
\centering
\begin{tikzpicture}[
scale=1,
hrect/.style={fill=blue!35,draw=blue!75!black,line width=.22pt,fill opacity=.28},
vrect/.style={fill=orange!42,draw=orange!80!black,line width=.22pt,fill opacity=.28},
strip/.style={fill=gray!8,draw=gray!45,line width=.18pt},
marker/.style={gray!45,densely dashed,line width=.18pt},
copylabel/.style={font=\scriptsize}
]
\def\s{0.4}
\def\w{0.68}

\draw[gray!55,line width=.25pt] (0,0) rectangle (12,12);

\foreach \i/\xi in {0/3,1/11,2/19,3/27} {
  \pgfmathsetmacro{\c}{\s*\xi}
  \pgfmathsetmacro{\a}{\c-\w/2}
  \pgfmathsetmacro{\b}{\c+\w/2}
  \draw[strip] (0,\a) rectangle (12,\b);
  \node[copylabel,anchor=east] at (-.18,\c) {$H_{\i}$};
}

\foreach \j/\xi in {0/3,1/11,2/19,3/27} {
  \pgfmathsetmacro{\c}{\s*\xi}
  \pgfmathsetmacro{\a}{\c-\w/2}
  \pgfmathsetmacro{\b}{\c+\w/2}
  \draw[strip] (\a,0) rectangle (\b,12);
  \node[copylabel,anchor=south] at (\c,12.18) {$V_{\j}$};
}

\foreach \i/\xi in {0/3,1/11,2/19,3/27} {
  \pgfmathsetmacro{\row}{\s*\xi-\w/2}
  \foreach \xa/\xb/\ya/\yb in {
    0/30/26/28,
    0/16/20/22,
    14/30/20/22,
    0/8/2/10,
    6/16/8/14,
    14/24/8/14,
    22/30/2/12,
    6/24/0/4
  } {
    \pgfmathsetmacro{\xone}{\s*\xa}
    \pgfmathsetmacro{\xtwo}{\s*\xb}
    \pgfmathsetmacro{\yone}{\row+\w*\ya/30}
    \pgfmathsetmacro{\ytwo}{\row+\w*\yb/30}
    \filldraw[hrect] (\xone,\yone) rectangle (\xtwo,\ytwo);
  }
}

\foreach \j/\xi in {0/3,1/11,2/19,3/27} {
  \pgfmathsetmacro{\col}{\s*\xi-\w/2}
  \foreach \xa/\xb/\ya/\yb in {
    0/30/26/28,
    0/16/20/22,
    14/30/20/22,
    0/8/2/10,
    6/16/8/14,
    14/24/8/14,
    22/30/2/12,
    6/24/0/4
  } {
    \pgfmathsetmacro{\xone}{\col+\w*\ya/30}
    \pgfmathsetmacro{\xtwo}{\col+\w*\yb/30}
    \pgfmathsetmacro{\yone}{\s*\xa}
    \pgfmathsetmacro{\ytwo}{\s*\xb}
    \filldraw[vrect] (\xone,\yone) rectangle (\xtwo,\ytwo);
  }
}
\foreach \i/\xi in {0/3,1/11,2/19,3/27} {
  \pgfmathsetmacro{\c}{\s*\xi}
  \draw[marker] (0,\c) -- (12,\c);
}
\foreach \j/\xi in {0/3,1/11,2/19,3/27} {
  \pgfmathsetmacro{\c}{\s*\xi}
  \draw[marker] (\c,0) -- (\c,12);
}
\end{tikzpicture}
\caption{The 64 rectangles of $P_1=C(P_0)$. Blue rectangles are the four
horizontal copies of $P_0$, and orange rectangles are the four transposed
vertical copies. The long gaps have been compressed and the copied strips
enlarged to make all rectangles visible.}
\label{fig:p1-rectangles}
\end{figure}

\Cref{fig:p1-fat} gives a clearer drawing of the same 64-rectangle graph. It
uses compact coordinates that keep the horizontal and vertical copy structure
visible and leave room for the marker-grid overlay in
\Cref{fig:p1-piercing-overlay}. The formal recursion is still the coordinate
construction just defined.

\begin{figure}[H]
\centering
\begin{tikzpicture}[
scale=1.5, transform shape, 
x=.075cm,y=.075cm,
hrect/.style={fill=blue!35,draw=blue!75!black,line width=.22pt,fill opacity=.28},
vrect/.style={fill=orange!45,draw=orange!80!black,line width=.22pt,fill opacity=.28},
marker/.style={black!45,dash pattern=on 1.1pt off .9pt,line width=.28pt},
copylabel/.style={font=\scriptsize},
slotlabel/.style={font=\tiny,scale=.72,transform shape}
]
\draw[gray!55,line width=.25pt] (-1,-1) rectangle (111,111);

\foreach \i in {0,1,2,3} {
  \pgfmathsetmacro{\yc}{30*\i+10}
  \node[copylabel,anchor=east] at (-3,\yc) {$H_{\i}$};
  \foreach \xa/\xb/\ya/\yb in {
    0/11/9/10,
    0/6/7/8,
    5/11/7/8,
    0/3/1/3.2,
    2/6/2.8/6,
    5/9/2.8/6,
    8/11/1/6,
    2/9/0/2.4
  } {
    \pgfmathsetmacro{\xone}{10*\xa}
    \pgfmathsetmacro{\xtwo}{10*\xb}
    \pgfmathsetmacro{\yone}{30*\i+\ya+5}
    \pgfmathsetmacro{\ytwo}{30*\i+\yb+5}
    \filldraw[hrect] (\xone,\yone) rectangle (\xtwo,\ytwo);
  }
}

\foreach \j in {0,1,2,3} {
  \pgfmathsetmacro{\xc}{30*\j+10}
  \node[copylabel,anchor=south] at (\xc,113) {$V_{\j}$};
  \foreach \xa/\xb/\ya/\yb in {
    0/11/9/10,
    0/6/7/8,
    5/11/7/8,
    0/3/1/3.2,
    2/6/2.8/6,
    5/9/2.8/6,
    8/11/1/6,
    2/9/0/2.4
  } {
    \pgfmathsetmacro{\xone}{30*\j+\ya+5}
    \pgfmathsetmacro{\xtwo}{30*\j+\yb+5}
    \pgfmathsetmacro{\yone}{10*\xa}
    \pgfmathsetmacro{\ytwo}{10*\xb}
    \filldraw[vrect] (\xone,\yone) rectangle (\xtwo,\ytwo);
  }
}

\foreach \j in {0,1,2,3} {
  \foreach \t/\pos in {0/1.4,1/3.6,2/7.5,3/9.5} {
    \pgfmathsetmacro{\mx}{30*\j+5+\pos}
    \draw[marker] (\mx,-1) -- (\mx,111);
    \node[slotlabel,anchor=north] at (\mx,-1.25) {$\t$};
  }
}
\foreach \i in {0,1,2,3} {
  \foreach \b/\pos in {0/1.4,1/3.6,2/7.5,3/9.5} {
    \pgfmathsetmacro{\my}{30*\i+5+\pos}
    \draw[marker] (-1,\my) -- (111,\my);
    \node[slotlabel,anchor=west] at (111.15,\my) {$\b$};
  }
}
\node[slotlabel,anchor=north east] at (-1.15,-1.25) {$t$};
\node[slotlabel,anchor=west] at (111.15,112) {$b$};
\end{tikzpicture}
\caption{A fatter drawing of the same graph as \Cref{fig:p1-rectangles}. The
compact coordinates keep the $H_i$ and $V_j$ copies visible; the dashed lines
show the marker grid used again in \Cref{fig:p1-piercing-overlay}. In column
$V_j$, the labels below the drawing give $t$, so the vertical marker lines are named
$\xi'_{(j,t)}$; in row $H_i$, the right labels give $b$, so the horizontal
marker lines are named $\eta'_{(i,b)}$.}
\label{fig:p1-fat}
\end{figure}

For the coordinate construction defining $C(P)$, the reason for the scaling is
that each old marker used to index a row or column has been expanded to a whole
block of length $L$ in the relevant coordinate. This makes the crossing rule
exact, and we prove it formally as a lemma.

\begin{lemma}[Crossing rule]\label{lem:crossing-rule}
For rectangles $R,S\in P$ and slots $i,j\in[q]$,
\[ H_i(R)\cap V_j(S)\ne\emptyset
\quad\Longleftrightarrow\quad
j\in X(R)\text{ and }i\in X(S). \]
\end{lemma}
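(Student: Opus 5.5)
Since both $H_i(R)$ and $V_j(S)$ are axis-parallel rectangles, they meet if and only if their $x$-projections meet and their $y$-projections meet. I will therefore treat the two coordinates separately and show that the $x$-condition is equivalent to $j\in X(R)$ and the $y$-condition to $i\in X(S)$.

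Write $R=[x_1,x_2]\times[y_1,y_2]$ and $S=[s_1,s_2]\times[t_1,t_2]$. Then
\[
H_i(R)=[Mx_1,Mx_2]\times[M\xi_i+y_1,M\xi_i+y_2],\qquad
V_j(S)=[M\xi_j+t_1,M\xi_j+t_2]\times[Ms_1,Ms_2].
\]
The $x$-projections are $[Mx_1,Mx_2]$ and $[M\xi_j+t_1,M\xi_j+t_2]$. Since $0\le t_1\le t_2\le L<M$, the second interval lies inside $[M\xi_j,M\xi_j+L]$.

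\emph{Case $\xi_j\in[x_1,x_2]$.} Then $M\xi_j\in[Mx_1,Mx_2]$. The separation hypothesis in the crossbar setup (each marker line is either inside the interval or at least one unit outside) applies here. When the marker is inside we only need $M\xi_j+t_1\le Mx_2$, and I would rather argue directly. If $\xi_j<x_2$, the separation gives $x_2\ge \xi_j+1$, so $Mx_2\ge M\xi_j+M> M\xi_j+t_1$. Also $Mx_1\le M\xi_j\le M\xi_j+t_2$. Hence the intervals meet. The boundary case $\xi_j=x_2$ is excluded by the separation fact, which puts the marker at least one unit inside.

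\emph{Case $\xi_j\notin[x_1,x_2]$.} Separation gives either $\xi_j\le x_1-1$ or $\xi_j\ge x_2+1$.
\begin{itemize}
\item If $\xi_j\le x_1-1$, then $M\xi_j+t_2\le M\xi_j+L<M(\xi_j+1)\le Mx_1$, so the intervals are disjoint.
\item If $\xi_j\ge x_2+1$, then $M\xi_j+t_1\ge M\xi_j\ge Mx_2+M>Mx_2$, so again they are disjoint.
\end{itemize}
Thus the $x$-projections meet if and only if $\xi_j\in R$'s $x$-interval, that is, if and only if $j\in X(R)$.

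The $y$-projections are $[M\xi_i+y_1,M\xi_i+y_2]$ and $[Ms_1,Ms_2]$. This is the same situation with the roles exchanged, so the identical argument shows they meet if and only if $\xi_i\in[s_1,s_2]$, that is, $i\in X(S)$.

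Combining the two coordinates gives the stated equivalence. The only delicate point is using the strict inequality $L<M=L+1$ together with the one-unit separation. These keep a translated block of width at most $L$ entirely on one side when the marker is outside, and force overlap when it is inside. The separation hypothesis is exactly what rules out the tangential boundary cases.
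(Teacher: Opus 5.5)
Your proof is correct and follows essentially the same route as the paper. Both arguments split into the $x$- and $y$-projections, note that the $x$-interval of $V_j(S)$ lies in the block $[M\xi_j,M\xi_j+L]$, and use the one-unit separation together with $M=L+1$ to get overlap when $\xi_j$ lies in $R$'s $x$-interval and disjointness otherwise. The only cosmetic difference is in the inside case: the paper shows the whole block is contained in $[Mx_1,Mx_2]$, whereas you check the two endpoint inequalities directly. Your sub-case $\xi_j<x_2$ is automatic there, since separation already gives $x_1+1\le\xi_j\le x_2-1$.
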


\begin{proof}
The $x$-interval of $V_j(S)$ lies inside the block
$[M\xi_j,M\xi_j+L]$. If $j\in X(R)$, then $\xi_j$ lies at least one unit
inside the $x$-interval of $R$, so $[Mx_1,Mx_2]$ contains the whole block
$[M\xi_j,M\xi_j+L]$. If $j\notin X(R)$, the same separation makes
$[Mx_1,Mx_2]$ disjoint from that block. Thus the two new rectangles overlap
in the $x$-coordinate exactly when $j\in X(R)$.

The $y$-coordinate is the same argument with $S$ in place of $R$: the
$y$-interval of $H_i(R)$ lies inside the block
$[M\xi_i,M\xi_i+L]$, while the $y$-interval of $V_j(S)$ is the scaled
$x$-interval of $S$. They overlap exactly when $i\in X(S)$.
Together, the two coordinates carry the two slot tests separately, giving the
claimed equivalence.
\end{proof}

The new drawing also has marker lines. Each old marker has been expanded to a
block that contains a copy of the old drawing, so a new marker is labelled by
two pieces of data: the old block, and the old marker inside that block. Index
the new markers by ordered pairs in $[q]\times[q]$ and set
\[ \xi'_{(j,t)}=M\xi_j+\eta_t,\qquad
\eta'_{(i,t)}=M\xi_i+\eta_t. \]
They are ordered lexicographically by the first coordinate.

The formulas for the new lists now come directly from the coordinates. The
rectangle $H_i(R)$ is stretched in the $x$-direction, so whenever $R$ contained
the old vertical marker $j$, the copy $H_i(R)$ contains the whole new block
over $j$, and hence all markers $(j,t)$. This gives
$X'(H_i(R))=X(R)\times[q]$. In the $y$-direction, the same rectangle lives only
in row $i$, and inside that row it sees the old horizontal markers of $R$; this
gives $Y'(H_i(R))=\{i\}\times Y(R)$. The vertical copies are transposed, so the
same reading swaps the old horizontal and vertical directions. Altogether,
\[
\begin{aligned}
X'(H_i(R))&=X(R)\times[q],&
Y'(H_i(R))&=\{i\}\times Y(R),\\
X'(V_j(R))&=\{j\}\times Y(R),&
Y'(V_j(R))&=X(R)\times[q].
\end{aligned}
\]
The entry $X'(V_j(R))=\{j\}\times Y(R)$ is the point where old horizontal
data becomes new vertical data. This is the precise place where the second
list system is used.

The same one-unit separation is preserved: after scaling by $M=L+1$, every
old one-unit margin becomes a gap of length at least $L+1$, and inserting a
copy of the old $[0,L]^2$ drawing still leaves room on the required side of
each new marker. The marker spacing is preserved as well. Inside one old block
it is the old spacing among the markers $\eta_t$; between consecutive old
blocks it is at least $M(\xi_{j+1}-\xi_j)-L\ge 1$. Thus the construction can be
repeated\footnote{For the first step, this bookkeeping has a very nice and concrete graph-level picture, and we record that in
\Cref{app:first-crossbar-schematics}, which shows the sixteen crossings of $P_1$ and the complete bipartite pieces that appear inside them.}.

We now show that the repetition preserves the Hall properties.

\begin{lemma}\label{lem:repetition-hall}
If both $(P,X)$ and $(P,Y)$ have the Hall property, then both
$(C(P),X')$ and $(C(P),Y')$ have the Hall property.
\end{lemma}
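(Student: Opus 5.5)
Let $I$ be an independent set of $C(P)$. We construct injective slot assignments directly; \Cref{lem:hall-criterion} is not needed. The new slot set is $[q]\times[q]$. Split $I$ into its horizontal part and its vertical part. For each row $i$, let $I^H_i=\{R\in P: H_i(R)\in I\}$. For each column $j$, let $I^V_j=\{S\in P: V_j(S)\in I\}$. Each copy $H_i$ or $V_j$ is an isometric (possibly transposed) image of $P$, so every $I^H_i$ and every $I^V_j$ is independent in $P$.

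\textbf{The $Y'$ system.} This case is the easy one because there are no interactions across rows or columns.
\begin{itemize}
\item Fix row $i$. The Hall property of $(P,Y)$ gives an injection $\psi_i:I^H_i\to[q]$ with $\psi_i(R)\in Y(R)$. Set $\phi(H_i(R))=(i,\psi_i(R))\in\{i\}\times Y(R)=Y'(H_i(R))$. These values are distinct within row $i$, and different rows use different first coordinates.
\item For the vertical copies, $Y'(V_j(S))=X(S)\times[q]$. These lists may collide with the horizontal images, so they need care. The idea is to give $V_j(S)$ the slot $(a,j)$, where $a\in X(S)$ is chosen by the Hall property of $(P,X)$ applied to $I^V_j$. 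Distinct columns then differ in the second coordinate, and within a column the first coordinates differ.
\item It remains to rule out collisions between the two families. Suppose $(i,\psi_i(R))=(a,j)$. Then $i=a\in X(S)$ and $j=\psi_i(R)\in Y(R)$. The crossing rule, \Cref{lem:crossing-rule}, only tells us that $H_i(R)$ and $V_j(S)$ are disjoint when $j\notin X(R)$ or $i\notin X(S)$. Here $i\in X(S)$, so disjointness forces $j\notin X(R)$. That is not a contradiction, since $j\in Y(R)$.
\end{itemize}
So this naive pairing of the two coordinate systems can fail, and the construction must be rethought. The cleaner route is to choose coordinates so that the horizontal and vertical contributions use incompatible data, and then use the crossing rule for the one remaining collision type.

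\textbf{The $X'$ system.} Here $X'(H_i(R))=X(R)\times[q]$ and $X'(V_j(S))=\{j\}\times Y(S)$.
\begin{itemize}
\item For the vertical copies, use $(j,\psi'_j(S))$, where $\psi'_j$ comes from the Hall property of $(P,Y)$ applied to $I^V_j$. These are distinct within a column and across columns.
\item For the horizontal copies, use $(\chi_i(R),i)$, where $\chi_i:I^H_i\to[q]$ comes from the Hall property of $(P,X)$, so $\chi_i(R)\in X(R)$. These are distinct within a row by the first coordinate and across rows by the second.
\item A collision $(\chi_i(R),i)=(j,\psi'_j(S))$ means $j=\chi_i(R)\in X(R)$ and $i=\psi'_j(S)\in Y(S)$. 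The crossing rule says $H_i(R)$ meets $V_j(S)$ exactly when $j\in X(R)$ and $i\in X(S)$. Since the two rectangles are disjoint and $j\in X(R)$, we get $i\notin X(S)$.
\end{itemize}
So the collision is again not excluded outright: the obstruction is a pair with $i\in Y(S)\setminus X(S)$.

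\textbf{Main obstacle.} Resolving these cross-collisions is the hard step. I would handle it by a Hall argument on the bipartite conflict structure rather than a fixed formula.
\begin{itemize}
\item Fix the horizontal part. For each column $j$, the horizontal elements occupying slots of the form $(j,\cdot)$ are exactly those $H_i(R)$ with $j\in X(R)$ assigned to block $j$.
\item Any such $H_i(R)$ forces every $V_j(S)\in I$ to satisfy $i\notin X(S)$.
\item I would then try to show that the vertical copies in column $j$, together with the horizontal ones placed in block $j$, still satisfy Hall's condition inside $\{j\}\times[q]$. The intended tool is the Hall property of $(P,Y)$ combined with a counting of the available rows.
\item If that fails, the fallback is a global Hall check on $[q]\times[q]$. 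For any subset of slots $A$, bound the elements of $I$ whose lists lie in $A$ by decomposing $A$ into rows and columns and applying the two base Hall properties row by row and column by column.
\end{itemize}
I expect this global Hall verification, which exploits the crossing rule to exclude overlapping demand, to be the crux of the proof.
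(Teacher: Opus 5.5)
Your proposal has a genuine gap: it never proves the lemma. You correctly show that the fixed-formula assignments $(\chi_i(R),i)$ and $(a,j)$ can collide. But the decisive step is left as a plan (``I would then try to show\ldots'', ``if that fails, the fallback is a global Hall check''), and neither route is carried out. Your formulas also waste the one freedom the construction gives you. Since $X'(H_i(R))=X(R)\times[q]$, once a horizontal rectangle has an outer column $j\in X(R)$ it may use \emph{any} inner slot $(j,t)$. So its inner coordinate should not be fixed to $i$. It should be chosen after the vertical rectangles in block $j$ have been placed.

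The missing idea is the count that makes this fill-in work, and it uses the Hall property of $(P,X)$ on the \emph{vertical} part, not only $(P,Y)$.

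Fix a column $j$ and put $A_j=\bigcup_{S\in I^V_j}X(S)$. Place the verticals at $(j,\psi'_j(S))$ as you did; they occupy $|I^V_j|$ inner slots.

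\emph{Vertical count.} $I^V_j$ is independent in $P$, so the $X$-Hall property gives it distinct representatives, all lying in $A_j$. Hence $|I^V_j|\le|A_j|$.

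\emph{Horizontal count.} Suppose $H_i(R)\in I$ has $\chi_i(R)=j$, so $j\in X(R)$. The crossing rule and independence force $i\notin X(S)$ for every $S\in I^V_j$, that is, $i\notin A_j$. Injectivity of $\chi_i$ allows at most one such rectangle per row.

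So at most $q-|A_j|\le q-|I^V_j|$ horizontals are assigned to block $j$. That is no more than the number of free inner slots there, and they can be placed arbitrarily. Different blocks are disjoint, which gives the injection for $X'$.

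The $Y'$ case is the mirror image:
\begin{itemize}
\item Place each horizontal $H_i(R)$ at $(i,\psi_i(R))$.
\item Give each vertical $V_j(S)$ an outer row $a\in X(S)$ using $X$-Hall within its column.
\item In row block $i$, let $B_i=\bigcup_{R\in I^H_i}X(R)$. Then $|I^H_i|\le|B_i|$, verticals assigned to block $i$ come from columns outside $B_i$ (at most one per column), and $Y'(V_j(S))=X(S)\times[q]$ leaves their inner coordinate free.
\end{itemize}

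Your remark that any such $H_i(R)$ forces $i\notin X(S)$ for the verticals in column $j$ is exactly half of this argument. The other half, the bound $|I^V_j|\le|A_j|$, is what you did not identify.
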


\begin{proof}
We prove the statement for $X'$. Let $I$ be an independent set in $C(P)$.
First consider a fixed vertical copy $V_j(P)$. Since $(P,Y)$ has the Hall
property, the rectangles of $I\cap V_j(P)$ can be assigned distinct old
$Y$-slots. If $V_j(S)$ receives the old slot $t\in Y(S)$, place it in the new
slot $(j,t)$. Here $Y$ is used to choose distinct inner positions in the
column $j$.

Now consider a fixed horizontal copy $H_i(P)$. Since $(P,X)$ has the Hall
property, the rectangles of $I\cap H_i(P)$ can be assigned distinct old
$X$-slots. Think of such an assignment as choosing an outer column $j$ for
each selected horizontal rectangle.

Fix a column $j$. Let
\[ A_j=\bigcup_{V_j(S)\in I} X(S). \]
The same vertical rectangles also block rows, and this blocking is measured
by their old $X$-lists. The rectangles of $I\cap V_j(P)$ are independent
inside the old graph, so the Hall property for $X$ gives
$|I\cap V_j(P)|\le |A_j|$. If $i\in A_j$, then some selected $V_j(S)$ has
$i\in X(S)$. By \Cref{lem:crossing-rule}, no selected horizontal rectangle in
row $i$ can be assigned to column $j$, since that assignment would mean
$j\in X(R)$ and would create an intersection.

Thus the selected horizontal rectangles assigned to column $j$ can come only
from the $q-|A_j|$ rows outside $A_j$. The old $X$-assignment is injective
inside each horizontal copy, so there is at most one such rectangle from each
row. There are already $|I\cap V_j(P)|$ occupied slots among the $q$ slots
$(j,0),\ldots,(j,q-1)$, so at least $q-|I\cap V_j(P)|$ remain free. Since
$q-|A_j|\le q-|I\cap V_j(P)|$, the horizontal rectangles assigned to column
$j$ can be placed into these free inner slots. This is allowed because
$X'(H_i(R))=X(R)\times[q]$: once $H_i(R)$ has been assigned to column $j$, it
may use any inner slot $(j,t)$. Doing this for every $j$ injects all of $I$
into the $X'$-slots.

The proof for $Y'$ is the same with the roles of horizontal and vertical
copies interchanged; the formulas for $Y'$ are obtained from the formulas for
$X'$ by making exactly that coordinate swap. Equivalently, the column-by-column
assignment above becomes a row-by-row assignment, with old $X$- and $Y$-slots
interchanged.
\end{proof}

Starting with the base drawing $P_0$ from \Cref{prop:base-realization}, define
\[ P_{r+1}=C(P_r). \]
Let $q_r$ be the number of slots in each direction for $P_r$. Then
\[ q_0=4,\qquad q_{r+1}=q_r^2,\qquad q_r=4^{2^r}. \]
By \Cref{lem:repetition-hall}, every $P_r$ has the Hall property in both
directions.

\begin{proposition}\label{prop:independence-recursion}
For every $r\ge 0$, $\alpha(P_r)=q_r$ and $|P_r|=2^{r+1}q_r$.
\end{proposition}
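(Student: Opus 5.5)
We argue by induction on $r$, treating the size count and the independence number separately. The size count is immediate. We have $|P_0|=8=2\cdot 4=2^{1}q_0$, and the crossbar operation gives $|C(P)|=2q|P|$ because distinct copies are disjoint as sets of rectangles. Hence
\[
|P_{r+1}|=2q_r\cdot 2^{r+1}q_r=2^{r+2}q_r^2=2^{r+2}q_{r+1}.
\]

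For the upper bound $\alpha(P_r)\le q_r$, we use that every $P_r$ has the Hall property for $X$; this follows from \Cref{lem:base-x} and \Cref{lem:repetition-hall} by induction. Any independent set $I$ of $P_r$ then injects into the $q_r$ vertical slots, so $|I|\le q_r$. No further work is needed here; the whole difficulty has already been absorbed into \Cref{lem:repetition-hall}.

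The substantive part is the lower bound $\alpha(P_r)\ge q_r$, which requires an explicit independent set. At $r=0$ we take $\{T,U_L,C_0,C_2\}$, which is independent in $F$ by \Cref{prop:base-realization}. For the inductive step we choose a maximum independent set $I$ of $P_r$, of size $q_r$, and aim for $q_r^2=q_{r+1}$ rectangles in $C(P_r)$. The natural attempt is to take all horizontal copies $H_i(I)$ for $i\in[q_r]$. Two rectangles in the same row copy are disjoint because $I$ is independent. Different rows are disjoint because the row blocks are separated. This gives $q_r\cdot q_r$ pairwise disjoint rectangles, so $\alpha(P_{r+1})\ge q_{r+1}$. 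Since no vertical copies are used, the crossing rule of \Cref{lem:crossing-rule} never needs to be checked.

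The only point requiring care, and the one I would verify explicitly, is the claim that distinct horizontal copies $H_i(R)$ and $H_{i'}(R')$ with $i\ne i'$ are disjoint. Their $y$-intervals lie in the blocks $[M\xi_i,M\xi_i+L]$ and $[M\xi_{i'},M\xi_{i'}+L]$. Since consecutive markers are at least one unit apart and $M=L+1$, these blocks are separated by a gap of at least $M-L=1$. This matches the remark after the definition of $C(P)$, so the inductive step goes through and yields $\alpha(P_r)=q_r$.
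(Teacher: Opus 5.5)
Your proof is correct and follows the paper's argument. The Hall property gives $\alpha(P_r)\le q_r$, the $q_r$ pairwise disjoint horizontal copies of a maximum independent set give $\alpha(P_{r+1})\ge q_r\alpha(P_r)$, and the size formula comes from $|P_{r+1}|=2q_r|P_r|$. One small citation fix: \Cref{lem:repetition-hall} requires both $(P,X)$ and $(P,Y)$ to have the Hall property, so the base of that induction also needs \Cref{lem:base-y}, not only \Cref{lem:base-x}.
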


\begin{proof}
The Hall property gives $\alpha(P_r)\le q_r$. For the reverse inequality,
notice that $P_{r+1}$ contains $q_r$ pairwise disjoint horizontal copies of
$P_r$. Taking a maximum independent set in each of these copies gives
$\alpha(P_{r+1})\ge q_r\alpha(P_r)$. Since
$\alpha(P_0)=\alpha(F)=4=q_0$, induction gives $\alpha(P_r)=q_r$ for all
$r$.

The size formula follows from $|P_0|=8=2q_0$ and
$|P_{r+1}|=2q_r|P_r|$.
\end{proof}

\begin{proof}[Proof of \Cref{thm:counterexample}]
The first repeated drawing $P_1$ has
\[ |P_1|=2\cdot 4\cdot 8=64 \]
rectangles. By \Cref{prop:independence-recursion},
$\nu(P_1)=\alpha(P_1)=q_1=16$.

It remains to check triangle-freeness. A triangle cannot lie inside one copy,
because $P_0$ is the triangle-free graph $C_5\sqcup K_2\sqcup K_1$.
Distinct horizontal copies are disjoint from one another, and distinct
vertical copies are disjoint from one another. Hence any remaining triangle
would have two rectangles from one copy and one rectangle from a copy in the
other direction.

Suppose two rectangles $H_i(R_1),H_i(R_2)$ in the same horizontal copy both
meet a rectangle $V_j(S)$. By \Cref{lem:crossing-rule}, both old rectangles
$R_1$ and $R_2$ have $j$ in their $X$-lists. By \Cref{lem:base-x}, they are
nonadjacent in the base graph, so their horizontal copies do not intersect.
The case with two rectangles in one vertical copy is identical. Therefore
$P_1$ is triangle-free.

Since no point can lie in three rectangles of a triangle-free rectangle
family, every piercing point meets at most two rectangles. Thus
\[ \tau(P_1)\ge \frac{64}{2}=32. \]
But $2\nu(P_1)-1=31$, so $\tau(P_1)>2\nu(P_1)-1$.
The exact value of $\tau(P_1)$ is not needed for this conclusion; the lower
bound $32$ already crosses the conjectured threshold.

This is the only place where the repeated family is used through
triangle-freeness. The later levels retain the recursive slot data and the
identity $\nu(P_r)=q_r$, which are the features used for the LP and piercing
results below.
\end{proof}

\section{Finite LP gaps}\label{sec:finite-lp-gaps}

The same separation into local copies and mixed crossings also makes the LP
checks small. We work with the clique relaxation from
\Cref{sec:overview}; for rectangle families, this is the standard point
relaxation. The guiding idea is to judge a weighting on one copy by two
features at once: the total value it contributes, and the amount of clique
weight that can be seen through each slot when another copy crosses it. The
latter quantities will be defined below as loads. A high-value weighting is
useful only if its exposed loads can be paired with small enough loads from the
copy crossing it.

The first repeated family $P_1$ already gives a feasible LP value equal to
$2\nu(P_1)$: since $P_1$ is triangle-free, assigning weight $1/2$ to every
rectangle is feasible and has value $32=2\nu(P_1)$. At the next level there is
a short hand-checkable weighting whose value is strictly larger than
$2\nu(P_2)$. We describe this weighting before the larger $P_3$ construction,
because it shows the mechanism in its smallest form.

We first set up the bookkeeping used to check feasibility when weights are
placed on horizontal and vertical copies.

For a nonnegative weight vector $w$ on a rectangle family $P$, write $w(K)$
for the sum of the weights on a clique $K$. If $P$ has $x$-slots and
$y$-slots, the rectangles using a fixed $x$-slot $s$ form the $x$-slot class
of $s$. Define
\[
X_w(s)=\max\{w(K):K\text{ is a clique and every rectangle in }K
\text{ uses }x\text{-slot }s\},
\]
\[
Y_w(t)=\max\{w(K):K\text{ is a clique and every rectangle in }K
\text{ uses }y\text{-slot }t\},
\]
and
\[
Z_w(s,t)=\max\{w(K):K\text{ is a clique and every rectangle in }K
\text{ uses both slots }s,t\}.
\]
We call these numbers the $x$-load at $s$, the $y$-load at $t$, and the
joint load at $(s,t)$, respectively. The empty clique is allowed in these
maxima and has weight zero.

\begin{lemma}\label{lem:lp-crossbar-loads}
Let $P$ be one of the recursive rectangle families, and put weights $h_i$ on
the horizontal copies $H_i(P)$ and weights $v_j$ on the vertical copies
$V_j(P)$ inside $C(P)$. Assume that each $h_i$ and $v_j$ is feasible on its
own copy. If
\[ X_{h_i}(j)+X_{v_j}(i)\le 1 \]
for every pair $i,j$, then the resulting weight vector on $C(P)$ is
clique-feasible.

Moreover, if $X'$ denotes the $x$-load function of the resulting weight vector,
then for the new $x$-slot $(j,t)$ in $C(P)$,
\[
X'(j,t)\le
\max\left\{
Y_{v_j}(t),
\max_i\bigl(X_{h_i}(j)+Z_{v_j}(i,t)\bigr)
\right\}.
\]
\end{lemma}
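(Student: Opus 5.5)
The proof will rest on the structure of cliques in $C(P)$. Take any clique $K$ of $C(P)$. Distinct horizontal copies are pairwise disjoint, and so are distinct vertical copies (\Cref{sec:crossbar}). Hence $K$ meets at most one horizontal copy $H_i(P)$ and at most one vertical copy $V_j(P)$. Write $K=K_H\cup K_V$ with $K_H\subseteq H_i(P)$ and $K_V\subseteq V_j(P)$. Each part is the image of a clique of $P$, because the maps $H_i$ and $V_j$ (the latter a transpose) preserve intersection within a copy.

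For feasibility there are two cases. If one part is empty, the bound $w(K)\le 1$ is just feasibility of $h_i$ or $v_j$ on its own copy. If both parts are nonempty, every rectangle $H_i(R)\in K_H$ meets every $V_j(S)\in K_V$. By \Cref{lem:crossing-rule} this forces $j\in X(R)$ for all $R$ in $K_H$ and $i\in X(S)$ for all $S$ in $K_V$. So $K_H$ is a clique all of whose members use $x$-slot $j$, giving $h_i(K_H)\le X_{h_i}(j)$. Likewise $v_j(K_V)\le X_{v_j}(i)$. Adding these and using the hypothesis gives $w(K)\le 1$.

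For the load bound, let $K$ be a clique all of whose members use the new slot $(j,t)$. From the list formulas, $X'(H_i(R))=X(R)\times[q]$ contains $(j,t)$ exactly when $j\in X(R)$. Also $X'(V_{j'}(S))=\{j'\}\times Y(S)$ contains $(j,t)$ exactly when $j'=j$ and $t\in Y(S)$. So the vertical part lies in $V_j(P)$ and all its members use $y$-slot $t$. Again there are two cases.
\begin{itemize}
\item If $K_H=\emptyset$, then $w(K)\le Y_{v_j}(t)$.
\item If $K_H\subseteq H_i(P)$ is nonempty, then each $R$ in $K_H$ uses $x$-slot $j$, so $h_i(K_H)\le X_{h_i}(j)$. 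If $K_V$ is also nonempty, the crossing rule gives $i\in X(S)$ for each $S$ in $K_V$, and we already have $t\in Y(S)$. So $K_V$ is a clique using both slots $i$ and $t$, and $v_j(K_V)\le Z_{v_j}(i,t)$. This bound also holds when $K_V$ is empty, since the empty clique is allowed. Together, $w(K)\le X_{h_i}(j)+Z_{v_j}(i,t)$.
\end{itemize}
Taking the maximum over all cases gives the stated inequality.

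The main point needing care is the reduction to at most one horizontal and one vertical copy, together with the fact that a clique inside a copy corresponds exactly to a clique of $P$. Both follow from the block spacing in the construction of $C(P)$. The only other thing to keep straight is which list (old $X$ or old $Y$) governs each slot test after transposition.
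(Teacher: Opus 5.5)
Your proof is correct and takes the same route as the paper. You split a clique into at most one horizontal and one vertical part, bound the mixed case by $X_{h_i}(j)+X_{v_j}(i)$ via the crossing rule, and for the load bound separate the purely vertical case ($Y_{v_j}(t)$) from the case with a horizontal part ($X_{h_i}(j)+Z_{v_j}(i,t)$, with the empty clique covering an absent vertical part); you are only more explicit than the paper in reading off from the new list formulas which copies can use slot $(j,t)$ and in noting that $H_i$ and $V_j$ carry cliques of $P$ to cliques of the copy.
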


\begin{proof}
Distinct horizontal copies are disjoint from one another, and distinct vertical
copies are disjoint from one another. Hence every clique in $C(P)$ is either
contained in one copy, or lies in $H_i(P)\cup V_j(P)$ for a unique pair $i,j$.
The local cliques are feasible by assumption. In the mixed case,
\Cref{lem:crossing-rule} says that the
horizontal part is contained in the old $x$-slot class for $j$, and the
vertical part is contained in the old $x$-slot class for $i$. Thus its weight
is at most
$X_{h_i}(j)+X_{v_j}(i)$.

For the displayed bound, a clique in the new $x$-slot $(j,t)$ is either
wholly inside $V_j(P)$, giving the term $Y_{v_j}(t)$, or has a horizontal part
inside some $H_i(P)$. In the second case its horizontal part lies in the old
$x$-slot class for $j$. Its vertical part, if present, lies in the old joint
slot class for $(i,t)$; if it is absent, the empty clique contributes zero to
$Z_{v_j}(i,t)$.
\end{proof}

The lemma also suggests a useful compression. In $P_1$, the $x$-slots are
pairs $(a,b)\in[4]^2$. The first coordinate records the old block, while the
second coordinate records which base slot lies inside that block. The
weightings below treat the old blocks uniformly, so their loads depend only on
the second coordinate. We call this coordinate the final digit of the slot.
In $P_2=C(P_1)$, the horizontal and vertical copies of $P_1$ are indexed by
the sixteen slots of $P_1$, and the final digit of a copy means the final digit
of its index.

This convention keeps the finite checks small. Although there are sixteen
copies of $P_1$ in each direction inside $P_2$, the weightings below have only
four types, according to this final digit.

The two subsections below have different roles. The first gives a direct
feasible weighting on $P_2$ with value larger than $2\nu(P_2)$. The second
constructs a more evenly distributed vector on $P_2$, whose $x$-loads are all
at most $1/2$; placing that vector in every copy at the next level gives the
$P_3$ gap from \Cref{thm:p3-gap}.
The one-copy load checks used in both subsections are collected in
\Cref{app:profile-checks}.

\subsection{A first gap above 2}\label{subsec:first-gap-above-two}

We begin with a feasible vector on $P_2$ whose value is already larger than
$2\nu(P_2)$. The ingredients are six weightings of $P_1$.

Let $\mathcal C=\{C_0,C_1,C_2,C_3,Q\}$ be the five-cycle in the base graph,
and let $\mathcal U=\{U_L,U_R\}$. Inside a copy of $P_1$, there are four
horizontal copies of $P_0$ and four vertical copies of $P_0$. We use a small
class of weightings chosen to follow the distinctions made by the final-digit
checks. The horizontal half contributes in the same way to every final digit. In
the vertical half, digits $0$ and $1$ see the five-cycle part, digit $2$ sees
the pair $U_L,U_R$, and digit $3$ sees $T$. This suggests four weights: let
$h$ be the weight on every rectangle in the horizontal half of $P_1$; in the
vertical half, let $c$ be the weight on the five-cycle vertices $\mathcal C$,
let $u$ be the weight on the two vertices in $\mathcal U$, and let $t$ be the
weight on $T$.

The four horizontal copies contribute $4\cdot 8h$, while the four vertical
copies contribute $4\cdot 5c+4\cdot 2u+4t$. Thus such a weighting has total value
\[ 32h+20c+8u+4t. \]

With these four weights, the relevant $x$-loads are easy to read. For a
slot $(a,b)$ of $P_1$, only the final digit $b$ matters. If $b=0$ or $1$, the
vertical vertices using that old horizontal slot form a path of two possible
edges in the five-cycle. A clique using the slot either lies wholly in that
vertical copy, giving weight at most $2c$, or also uses one horizontal
rectangle, giving weight at most $h+c$. Thus
\[
X(0)=X(1)=\max\{h+c,2c\}.
\]
The same reasoning for the pair $U_L,U_R$ gives
\[
X(2)=\max\{h+u,2u\},
\]
and the slot containing only $T$ gives
\[
X(3)=h+t.
\]

So the choice of profiles becomes a small load-budget problem. When a
horizontal copy of final digit $b$ faces a vertical copy of final digit $d$ in
$P_2$, \Cref{lem:lp-crossbar-loads} asks that the $d$-load of the horizontal
profile and the $b$-load of the vertical profile add to at most one.

The table below can be found by working backwards from this budget. The first
useful place to look is the high-value end of this four-parameter class. The
uniform horizontal weight may be set to $h=1/2$, and cannot be made larger,
because each horizontal copy contains edges. The same reasoning lets us set the
five-cycle weight to $c=1/2$. We keep $t=0$, since $T$ spends the
final-digit-$3$ budget and contributes only four rectangles. The remaining
useful dial is $u$, which changes the final-digit-$2$ load. This leads to the
one-parameter family
\[
(h,c,u,t)=(1/2,1/2,u,0).
\]
For $0\le u\le 1/4$, which is the range used below, this family has value
$26+8u$ and load vector
\[
(X(0),X(1),X(2),X(3))=(1,1,1/2+u,1/2).
\]
Thus it spends the full budget in final digits $0$ and $1$, spends only half
the budget in final digit $3$, and leaves a tunable amount in final digit $2$.
Trying to use this family in the vertical copies of $P_2$ immediately forces
the horizontal rows with final digits $0$ and $1$ to be zero. The row with
final digit $3$ may carry load $1/2$ in every digit, giving the balanced
profile $B$ below. The row with final digit $2$ is where the remaining freedom
lies.

The row with final digit $2$ must now mediate between these choices. In two
columns it should use half the budget, so only the choice $u=0$ from the
high-value family fits opposite it. In one column it should use little budget,
so the largest choice from the same family fits. In the remaining column it
should use an intermediate amount. The first two requirements set the first
two entries to $1/2$, and the largest choice $u=1/4$ sets the last entry to
$1/4$. The middle entry may be any value between $1/4$ and $1/2$; changing it
only shifts digit-$2$ weight from the row profile to the middle vertical
profile. We take the midpoint, giving the target
\[
\ell=(1/2,1/2,3/8,1/4).
\]
Pushing the four weights under these caps gives
$(h,c,u,t)=(1/4,1/4,1/8,0)$: the cap in final digit $3$ permits
$h=1/4$ with no weight on $T$; then the caps in final digits $0,1$ permit
$c=1/4$, and the cap in final digit $2$ permits $u=1/8$. This is the profile
$A$. Now a vertical column of final digit $d$ has remaining budget
$1-\ell_d$, where $\ell_d$ is the $d$th entry of $\ell$. Since the high-value
family contributes $1/2+u$ in row $2$, we need $u\le 1/2-\ell_d$. Hence
digits $0$ and $1$ force $u=0$, digit $2$ allows $u=1/8$, and digit $3$
allows $u=1/4$. These are the profiles $C,D,E$.

The six profiles below record this solution. A bare number in the table means
that every rectangle in that half receives that weight; omitted parts receive
weight zero.

\begin{equation}\label{eq:p2-profiles}
\begin{array}{c|c|c|c|c}
\text{name}&\text{horizontal half}&\text{vertical half}&
\text{value}&(X(0),X(1),X(2),X(3))\\ \hline
O&0&0&0&(0,0,0,0)\\
A&1/4&\mathcal C:1/4,\ \mathcal U:1/8,\ T:0&
14&(1/2,1/2,3/8,1/4)\\
B&3/8&1/8&16&(1/2,1/2,1/2,1/2)\\
C&1/2&\mathcal C:1/2,\ \mathcal U:0,\ T:0&
26&(1,1,1/2,1/2)\\
D&1/2&\mathcal C:1/2,\ \mathcal U:1/8,\ T:0&
27&(1,1,5/8,1/2)\\
E&1/2&\mathcal C:1/2,\ \mathcal U:1/4,\ T:0&
28&(1,1,3/4,1/2).
\end{array}
\end{equation}

In the last column, $X(b)$ denotes an upper bound on $X_w(a,b)$ for any first
digit $a$; the bound depends only on the final digit $b$. For the displayed
profiles, these bounds are exactly the numbers given by the formulas above.
For instance, profile $A$ has
$(h,c,u,t)=(1/4,1/4,1/8,0)$, and hence
$X(2)=h+u=3/8$. Profile $E$ has
$(h,c,u,t)=(1/2,1/2,1/4,0)$, and hence
$X(2)=h+u=3/4$.

Each of the six weightings is feasible on $P_1$.
Feasibility on $P_1$ is immediate from triangle-freeness: every clique is a
vertex or an edge, and every edge in the listed supports has total weight at
most one. The mixed table below shows how the local bounds spend the available
slack.

In \Cref{fig:p2-lp-bookkeeping}, $b$ denotes the final digit of the horizontal
copy index and $d$ denotes the final digit of the vertical copy index.

\begin{figure}[H]
\centering
\begin{tikzpicture}[
font=\small,
panel/.style={anchor=north,align=center,inner sep=0pt}
]
\node[panel] at (0,0) {
profile assignment\\[3pt]
$\begin{array}{c|cccc}
&0&1&2&3\\ \hline
\text{horizontal}&O&O&A&B\\
\text{vertical}&C&C&D&E
\end{array}$
};
\node[panel] at (6.2,0) {
	mixed clique sums\\[3pt]
	$\begin{array}{c|ccc}
	&d=0,1&d=2&d=3\\ \hline
	b=0,1&0+1&0+1&0+1\\
	b=2&1/2+1/2&3/8+5/8&1/4+3/4\\
	b=3&1/2+1/2&1/2+1/2&1/2+1/2
	\end{array}$
	};
\end{tikzpicture}
\caption{Bookkeeping for the $P_2$ weighting. The final digits are the four
base slot types, and the right panel records how the chosen profiles spend the
mixed-clique load budget in \Cref{prop:p2-gap}.}
\label{fig:p2-lp-bookkeeping}
\end{figure}

We now show that $P_2$ actually beats $2$.

\begin{proposition}\label{prop:p2-gap}
The second recursive family satisfies
\[ \frac{\alpha^*(P_2)}{\nu(P_2)}\ge\frac{137}{64}>2. \]
\end{proposition}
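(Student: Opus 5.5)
The plan is to write down an explicit clique-feasible weight vector on $P_2=C(P_1)$ and compute its value. Feasibility will come from \Cref{lem:lp-crossbar-loads}, and the bound on $\nu(P_2)$ from \Cref{prop:independence-recursion}. The vector is the one recorded in \Cref{fig:p2-lp-bookkeeping}. Each horizontal copy $H_{(a,b)}(P_1)$ receives profile $O,O,A,B$ according as its final digit $b$ is $0,1,2,3$. Each vertical copy $V_{(c,d)}(P_1)$ receives profile $C,C,D,E$ according as its final digit $d$ is $0,1,2,3$. The profiles are those of \eqref{eq:p2-profiles}.

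\textbf{Value.} There are four copies of each final digit in each direction. The horizontal copies therefore contribute $4(0+0+14+16)=120$, and the vertical copies contribute $4(26+26+27+28)=428$, for a total of $548$. Since $\nu(P_2)=q_2=256$ by \Cref{prop:independence-recursion}, the ratio is $548/256=137/64>2$. It remains to prove feasibility.

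\textbf{Local feasibility.} Each profile must be feasible on its own copy of $P_1$. Since $P_1$ is triangle-free (proof of \Cref{thm:counterexample}), every clique is a single rectangle or an edge, so it suffices to check that all weights are at most $1/2$. Every listed weight is indeed at most $1/2$, so every edge has total weight at most $1$.

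\textbf{Mixed cliques.} Next I verify the hypothesis $X_{h_i}(j)+X_{v_j}(i)\le 1$ of \Cref{lem:lp-crossbar-loads}. The first step is to justify the load formulas $X(0)=X(1)=\max\{h+c,2c\}$, $X(2)=\max\{h+u,2u\}$ and $X(3)=h+t$ as upper bounds on $X_w(a,b)$ for every first digit $a$. A clique in the $x$-slot class of $(a,b)$ in $P_1$ has one of two forms.
\begin{itemize}
\item It is an edge inside the vertical copy $V_a(P_0)$ among base rectangles whose $Y$-list contains $b$. From the table of $Y$, these are the path $C_0Q$ or $C_1C_2$ together with the flexible vertex $C_3$ for $b\in\{0,1\}$, the edge $U_LU_R$ for $b=2$, and the single vertex $T$ for $b=3$.
\item It contains a horizontal rectangle $H_i(R)$ with $a\in X(R)$. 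By \Cref{lem:base-x}, two such horizontal rectangles are nonadjacent, so the clique contains at most one horizontal rectangle together with at most one vertical rectangle.
\end{itemize}
Plugging in the parameters $(h,c,u,t)$ of each profile gives the load column of \eqref{eq:p2-profiles}.

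The mixed check then reduces to the $3\times 3$ table of \Cref{fig:p2-lp-bookkeeping}. A horizontal copy with final digit $b$ meeting a vertical copy with final digit $d$ contributes the $d$-th load of its horizontal profile. The vertical copy contributes the $b$-th load of its profile. Every entry of the table sums to at most $1$. Note that rows $b=0,1$ use profile $O$, which is why columns $C,C,D,E$ with load $1$ in digits $0,1$ are allowed. \Cref{lem:lp-crossbar-loads} then yields clique-feasibility on all of $P_2$.

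\textbf{Main obstacle.} The delicate step is the load computation. One must confirm that the slot class really contains only the cliques listed above, using \Cref{lem:crossing-rule} and the $Y$-lists. One must also confirm that $C_3$, which lies in both digit-$0$ and digit-$1$ classes, creates no heavier clique; here every cycle edge inside a slot class has weight $2c$, which is covered by the formula.
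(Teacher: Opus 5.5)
Your proposal is correct and follows the paper's proof essentially step for step. You use the same $O,O,A,B$ and $C,C,D,E$ assignment by final digit, local feasibility from the triangle-freeness of $P_1$, the mixed-clique check through \Cref{lem:lp-crossbar-loads} and the $3\times 3$ load table, and the value $548/256=137/64$, while your re-derivation of the final-digit load formulas from the $Y$-lists and \Cref{lem:base-x} makes explicit what the paper establishes just before the proposition and in \Cref{app:profile-checks}.
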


\begin{proof}
In a horizontal copy $H_i(P_1)$ of $P_2$, place the weightings
\[
O,\ O,\ A,\ B
\]
when the final digit of $i$ is $0,1,2,3$, respectively. In a vertical copy
$V_j(P_1)$, place
\[
C,\ C,\ D,\ E
\]
when the final digit of $j$ is $0,1,2,3$, respectively.

Local cliques are feasible by the discussion above. For a mixed clique
between $H_i(P_1)$ and $V_j(P_1)$, let $b$ be the final digit of $i$ and $d$
the final digit of $j$. By \Cref{lem:lp-crossbar-loads}, it is enough to
check the following sums:
\begin{equation}\label{eq:p2-mixed-loads}
\begin{array}{c|ccc}
&d=0,1&d=2&d=3\\ \hline
b=0,1&0+1&0+1&0+1\\
b=2&1/2+1/2&3/8+5/8&1/4+3/4\\
b=3&1/2+1/2&1/2+1/2&1/2+1/2.
\end{array}
\end{equation}
Every entry is at most one, so the whole weight vector is clique-feasible.

Each final digit occurs four times among the sixteen copies. The total weight
is therefore
\[
4(14)+4(16)+8(26)+4(27)+4(28)
=548.
\]
By \Cref{prop:independence-recursion}, $\nu(P_2)=q_2=256$. Hence
\[ \frac{\alpha^*(P_2)}{\nu(P_2)}\ge\frac{548}{256}=\frac{137}{64}. \]
\end{proof}

\subsection{A finite gap above the previous bound}
\label{subsec:gap-above-previous-bound}

For the next finite improvement, we need a vector that can be placed in every
horizontal and every vertical copy of $P_2$ inside $P_3$. The previous
weighting beats $2$ by matching heavier profiles with lighter profiles in
particular final digits. That is enough for one level. A reusable vector needs
a uniform cap instead: every old $x$-slot class should contribute at most
$1/2$, so that a mixed clique in $P_3$ sees at most $1/2+1/2$.

The same level $P_2$ contains such a vector. Its total value is smaller than
the one in the previous subsection, but the uniform $1/2$ bound on its
$x$-loads makes it reusable.

We use four simpler weightings on $P_1$, together with the zero profile, which
we denote by $0$; again, omitted parts receive weight zero. The profiles $R,S$
are used in horizontal copies, while $U,V$ are used in vertical copies; the
table is arranged so that the mixed clique sums and the new $x$-loads can both
be checked from the final digit:
\begin{equation}\label{eq:p3-profiles}
\begin{array}{c|c|c|c}
\text{name}&\text{definition}&\text{value}&(X(0),X(1),X(2),X(3))\\ \hline
R&\text{horizontal half: }1/4;\ \text{vertical }\mathcal C:1/4&13&
(1/2,1/2,1/4,1/4)\\
S&\text{horizontal half: }1/2;\ \text{vertical half: }0&16&
(1/2,1/2,1/2,1/2)\\
U&\text{vertical }\mathcal C:1/2;\ \text{all else: }0&10&
(1,1,0,0)\\
V&\text{vertical }\mathcal C:1/2,\ \mathcal U:1/4,\ T:0;\ \text{horizontal half: }0&
12&(1,1,1/2,0).
\end{array}
\end{equation}
For $U$ and $V$ we shall also use
\begin{equation}\label{eq:p3-extra-loads}
Y_U(t),Y_V(t)\le\frac12,\qquad
Z_U\le(1/2,1/2,0,0),\qquad
Z_V\le(1/2,1/2,1/4,0),
\end{equation}
where the entries of $Z_U$ and $Z_V$ are again indexed by the final digit of
the $x$-slot, uniformly in the $y$-slot. These bounds follow from the same
one-copy check as before, recorded in \Cref{app:profile-checks}. In a vertical
copy, fixing a $y$-slot fixes an old $X$-slot in the base, because
$Y'(V_j(R))=X(R)\times[4]$; vertices using one old $X$-slot are independent.

In \Cref{fig:a2-lp-bookkeeping}, $b$ denotes the final digit of the horizontal
copy index and $d$ denotes the final digit of the vertical copy index.

\begin{figure}[H]
\centering
\begin{tikzpicture}[
font=\small,
panel/.style={anchor=north,align=center,inner sep=0pt}
]
\node[panel] at (0,0) {
profile assignment\\[3pt]
$\begin{array}{c|cccc}
&0&1&2&3\\ \hline
\text{horizontal}&0&0&R&S\\
\text{vertical}&U&U&V&V
\end{array}$
};
\node[panel] at (5.05,0) {
mixed clique checks\\[3pt]
$\begin{array}{c|cc}
&d=0,1&d=2,3\\ \hline
b=0,1&1&1\\
b=2&1/2&3/4\\
b=3&1/2&1/2
\end{array}$
};
\node[panel] at (9.75,0) {
new $x$-load checks\\[3pt]
$\begin{array}{c|cc}
&V_j=U&V_j=V\\ \hline
b=0,1&\le1/2&\le1/2\\
b=2&\le1/2&\le1/2\\
b=3&\le1/2&\le1/2
\end{array}$
};
\end{tikzpicture}
\caption{Bookkeeping for the reusable vector $A_2$. The profile assignment is
shown on the left. The two checks record the mixed-clique bounds inside $P_2$
and the uniform $1/2$ bound on the new $x$-loads.}
\label{fig:a2-lp-bookkeeping}
\end{figure}

\begin{lemma}\label{lem:a2-half-load}
There is a clique-feasible weight vector $A_2$ on $P_2$ with total value
$292$ such that $X_{A_2}(s)\le 1/2$ for every $x$-slot $s$ of $P_2$.
\end{lemma}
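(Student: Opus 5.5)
The plan is to build $A_2$ directly from the profile assignment in \Cref{fig:a2-lp-bookkeeping} and to check everything through \Cref{lem:lp-crossbar-loads}. In $P_2=C(P_1)$, I place on a horizontal copy $H_i(P_1)$ the profile $0,0,R,S$ according as the final digit of $i$ is $0,1,2,3$. On a vertical copy $V_j(P_1)$ I place $U,U,V,V$ according to the final digit of $j$. Each profile in \eqref{eq:p3-profiles} is feasible on $P_1$ by triangle-freeness, because every edge in its support carries total weight at most one. The main point to check is the horizontal edges in $R$ and $S$, which have $1/4+1/4$ and $1/2+1/2$.

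For the value, I use that each final digit occurs four times among the sixteen copies in each direction. This gives
\[
4\cdot 13+4\cdot 16+8\cdot 10+8\cdot 12=52+64+80+96=292 .
\]

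For clique-feasibility, by \Cref{lem:lp-crossbar-loads} it suffices to check $X_{h_i}(j)+X_{v_j}(i)\le 1$. I read these sums off the load columns of \eqref{eq:p3-profiles}. Since the loads depend only on final digits, let $b$ be the final digit of $i$ and $d$ that of $j$.
\begin{itemize}
\item For $b=0,1$ the horizontal load is $0$ and the vertical load is at most $1$.
\item For $b=2$ the horizontal profile is $R$. If $d=0,1$, the vertical profile is $U$, so the sum is $X_R(d)+X_U(2)=1/2+0$. If $d=2,3$, the vertical profile is $V$, so the sum is $1/4+X_V(2)=1/4+1/2=3/4$.
\item For $b=3$ the horizontal profile is $S$, so the sum is $1/2+X_U(3)$ or $1/2+X_V(3)$, both equal to $1/2$.
\end{itemize}
These are the entries of the middle panel of \Cref{fig:a2-lp-bookkeeping}, and all are at most one.

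For the half-load property, I apply the second part of \Cref{lem:lp-crossbar-loads} to a new $x$-slot $(j,t)$. The term $Y_{v_j}(t)$ is at most $1/2$ by \eqref{eq:p3-extra-loads}. For the mixed term $X_{h_i}(j)+Z_{v_j}(i,t)$, I again split on the final digit $b$ of $i$.
\begin{itemize}
\item If $b=0,1$, then $h_i=0$ and $Z\le 1/2$.
\item If $b=2$ and $v_j=U$, the bound is $X_R(j)+Z_U(2)\le 1/2+0$.
\item If $b=2$ and $v_j=V$, then $j$ has final digit $2$ or $3$. So $X_R(j)=1/4$ and $Z_V(2)=1/4$, giving $1/2$.
\item If $b=3$, the bound is $1/2+Z(3)=1/2+0$.
\end{itemize}
Hence $X_{A_2}(s)\le 1/2$ for every $s$.

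The main obstacle is not this arithmetic but the one-copy load bounds it relies on. These are the $X$-columns of \eqref{eq:p3-profiles} and the $Y$ and $Z$ bounds in \eqref{eq:p3-extra-loads}, as functions of the final digit only. They require describing, inside $P_1$, which rectangles share a given $x$-slot $(a,b)$ or a joint slot. I would verify them as in the $P_2$ computation. An $x$-slot with final digit $b$ meets the whole horizontal half only through base $X$-lists. It meets the vertical half through those base vertices whose $Y$-list contains $b$, namely $\mathcal C$-vertices for $b=0,1$, $\mathcal U$ for $b=2$, and $T$ for $b=3$. A clique there is at most one horizontal rectangle plus an edge or vertex of the vertical part. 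For $Y$ and $Z$ of $U,V$, I would use that a fixed $y$-slot in a vertical copy fixes an old base $X$-slot, whose vertices are independent by \Cref{lem:base-x}. Any such clique therefore contains at most one weighted vertex.
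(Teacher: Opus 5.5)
Your proposal is correct and follows the paper's proof essentially line for line. It uses the same placement ($0,0,R,S$ on horizontal copies, $U,U,V,V$ on vertical copies by final digit), the same count giving $292$, the same mixed-sum table from the first part of \Cref{lem:lp-crossbar-loads}, and the same case split on $b$ for the new $x$-loads from its second part.

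One small caution about your sketch of the one-copy bounds. Because $P_1$ is triangle-free, a clique inside an $x$-slot class is a single rectangle, a horizontal--vertical pair, or a vertical edge. It is never a horizontal rectangle together with a vertical edge. That is what gives $X_R(0)=\max\{h+c,2c\}=1/2$ rather than $3/4$, and your half-load case $b=2$, $v_j=U$ depends on exactly this value.
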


\begin{proof}
In the horizontal copies of $P_1$ inside $P_2$, place
\[
0,\ 0,\ R,\ S
\]
according to final digit $0,1,2,3$. In the vertical copies, place
\[
U,\ U,\ V,\ V.
\]
The value is
\[
4(13)+4(16)+8(10)+8(12)=292.
\]

Feasibility follows from \Cref{lem:lp-crossbar-loads} and the table
\begin{equation}\label{eq:a2-mixed-loads}
\begin{array}{c|cc}
&d=0,1&d=2,3\\ \hline
b=0,1&1&1\\
b=2&1/2&3/4\\
b=3&1/2&1/2
\end{array}
\end{equation}
where $b$ is the final digit of the horizontal copy and $d$ is the final
digit of the vertical copy. For example, the entry $3/4$ is
$X_R(2)+X_V(2)\le 1/4+1/2$.

It remains to bound the new $x$-loads. Fix a new $x$-slot $(j,t)$ of
$P_2$. If the clique is wholly inside the vertical copy $V_j(P_1)$, the load
is at most $1/2$ by \eqref{eq:p3-extra-loads}. If the clique has a horizontal
part, the second part of \Cref{lem:lp-crossbar-loads} and the bounds above give
\begin{equation}\label{eq:a2-x-loads}
\begin{array}{c|cc}
&V_j=U&V_j=V\\ \hline
b=0,1&\le1/2&\le1/2\\
b=2&1/2&1/2\\
b=3&1/2&1/2.
\end{array}
\end{equation}
Here the entries are the resulting upper bounds. For $b=0,1$ the horizontal
term is zero and the vertical term is at most $1/2$ by
\eqref{eq:p3-extra-loads}; the remaining entries come from
$1/2+0$, $1/4+1/4$, $1/2+0$, and $1/2+0$. Every entry is at most $1/2$.
\end{proof}

\begin{proof}[Proof of \Cref{thm:p3-gap}]
Form $P_3=C(P_2)$, and put one copy of the vector $A_2$ from
\Cref{lem:a2-half-load} in every horizontal and every vertical copy of
$P_2$. Local cliques are feasible. A mixed clique meets one horizontal copy
and one vertical copy, and by \Cref{lem:a2-half-load} each side contributes
at most $1/2$ to the relevant old $x$-slot class. Thus every mixed clique has
weight at most one.

The total value is
\[ 2q_2\cdot 292=2\cdot256\cdot292=149504. \]
Since $\nu(P_3)=q_3=65536$,
\[
\frac{\alpha^*(P_3)}{\nu(P_3)}
\ge\frac{149504}{65536}
=\frac{73}{32}.
\]
Finally, compared with the bound of
\cite[Theorem~15]{ajwani2026counterexamplewegnersconjectureaxisparallel},
\[ \frac{73}{32}-\frac{17891}{8064}=\frac{505}{8064}>0. \]
\end{proof}

\section{Fractional solutions approaching \texorpdfstring{$5/2$}{5/2}}\label{sec:recursive-lower}

The vector $A_2$ from \Cref{lem:a2-half-load} leaves us with a useful clue.
Its total value gives the finite gap at $P_3$, but the more important feature
is the uniform bound $X_{A_2}(s)\le 1/2$. Once a weighting has this half-load
property, it can be placed in every horizontal and every vertical copy at the
next level: every mixed clique then sees at most $1/2$ from each side.

This suggests the question for the rest of the section. Can the half-load
property be propagated while the normalized value keeps growing? If we can
build, on each $P_r$, a half-load weighting whose normalized value tends to
$5/4$, then putting it in every copy of $P_{r+1}$ gives ratios tending to
$5/2$.

At first it may seem that such a recursion has to remember every slot at every
level. The construction has a simpler trace. In the base slots, split
\[ \mathsf L=\{0,1\},\qquad \mathsf H=\{2,3\}. \]
For a slot of $P_{r+1}$, written as an ordered pair $(j,t)$ of old slots, its
class is the class of the inner slot $t$. Thus every level has two equally
large slot classes, low and high. The finite constructions above used the
whole final digit; the limiting construction only needs this low/high
information.

This split is already visible in the profile that starts the recursion. The
five-cycle vertices of $P_0$ use only the horizontal slots $0$ and $1$. When
those vertices are placed in vertical copies, old horizontal slots become new
$x$-slots. Thus the first source of fractional weight naturally lives on the
low $x$-side and leaves the high $x$-side empty. The recursion keeps exploiting
this simple imbalance: low slots carry weight, high slots provide room, and
the next crossing only needs to know which of these two situations it is
seeing.

For a weighting $w$ on $P_r$, write $X_w(\mathsf L)$ for the maximum of
$X_w(s)$ over low $x$-slots $s$, and define $X_w(\mathsf H)$ similarly. We
use the same convention for $Y_w$. For joint loads, $Z_w(\mathsf L,\mathsf H)$
means the maximum of $Z_w(s,t)$ over low $x$-slots $s$ and high $y$-slots $t$.
Rows of a displayed $Z$-matrix are indexed by the $x$-class, and columns by
the $y$-class.

The next lemma records the old crossing calculation after this compression.
It is the only formal bookkeeping needed in the section: once the loads are
known in the two classes, the next level can be checked without remembering
the individual slots.

\begin{lemma}\label{lem:two-class-loads}
Let $P=P_r$. Suppose that $h_{\mathsf L},h_{\mathsf H}$ and
$v_{\mathsf L},v_{\mathsf H}$ are feasible weightings on $P$. In $C(P)$,
place $h_b$ in each horizontal copy whose index has class $b$, and place
$v_d$ in each vertical copy whose index has class $d$. If
\[ X_{h_b}(d)+X_{v_d}(b)\le 1 \]
for all $b,d\in\{\mathsf L,\mathsf H\}$, then the resulting weighting on
$C(P)$ is feasible.

Moreover, its new class loads satisfy
\[
X'_f\le
\max_{b,d}\{Y_{v_d}(f),\,X_{h_b}(d)+Z_{v_d}(b,f)\},
\]
\[
Y'_e\le
\max_{b,d}\{Y_{h_b}(e),\,X_{v_d}(b)+Z_{h_b}(d,e)\},
\]
and
\[
Z'_{f,e}\le
\max_{b,d}\{Z_{h_b}(d,e)+Z_{v_d}(b,f)\},
\]
where $b,d,e,f\in\{\mathsf L,\mathsf H\}$. Here $X'_f$ is the maximum new
$x$-load over slots of class $f$, and $Y'_e$ and $Z'_{f,e}$ are interpreted
similarly.
\end{lemma}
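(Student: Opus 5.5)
This lemma is a two-class compression of \Cref{lem:lp-crossbar-loads}, so I would prove it by re-running that lemma's clique classification with the slot data reduced to classes. The needed fact is how classes transform. A new slot $(j,t)$ has class equal to the class of its inner slot $t$, while the outer index $j$ is an old slot whose class is $b$ or $d$. So a horizontal copy $H_i(P)$ carries the profile $h_b$ with $b$ the class of $i$, and a vertical copy $V_j(P)$ carries $v_d$ with $d$ the class of $j$.

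\emph{Feasibility.} Distinct horizontal copies are pairwise disjoint, and so are distinct vertical copies. Hence every clique of $C(P)$ either lies in one copy, where it is feasible by assumption, or lies in $H_i(P)\cup V_j(P)$ for a unique pair $i,j$. In the mixed case, \Cref{lem:crossing-rule} says the horizontal part lies in the old $x$-slot class of $j$ and the vertical part in the old $x$-slot class of $i$. Its weight is therefore at most $X_{h_b}(j)+X_{v_d}(i)\le X_{h_b}(d)+X_{v_d}(b)\le 1$, where $b,d$ are the classes of $i,j$ and the class loads are maxima over slots of that class.

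\emph{The load bounds.} Fix a new $x$-slot $(j,t)$ of class $f$. Since $X'(H_i(R))=X(R)\times[q]$ and $X'(V_j(R))=\{j\}\times Y(R)$, a clique using $(j,t)$ can have a vertical part only in $V_j(P)$, with all its members using old $y$-slot $t$. It can have horizontal parts only in copies whose rectangles contain old $x$-slot $j$, and by disjointness of row copies at most one $H_i(P)$ occurs. There are two cases.
\begin{itemize}
\item If there is no horizontal part, the weight is at most $Y_{v_d}(t)\le Y_{v_d}(f)$.
\item Otherwise the horizontal part uses old $x$-slot $j$, contributing at most $X_{h_b}(d)$. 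Each vertical member meets the horizontal part, so $i\in X(S)$ by \Cref{lem:crossing-rule}; together with $t\in Y(S)$ this puts the vertical part in the joint class $(i,t)$, bounded by $Z_{v_d}(b,f)$. The empty clique covers the case where the vertical part is absent.
\end{itemize}
Maximizing over $b,d$ gives the $X'$ bound.

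The $Y'$ bound is the transposed argument. Here $Y'(H_i(R))=\{i\}\times Y(R)$ and $Y'(V_j(R))=X(R)\times[q]$, so the horizontal copy plays the role of the vertical one, with its joint load indexed by $(d,e)$. For $Z'_{f,e}$, take a clique using new $x$-slot $(j,t)$ of class $f$ and new $y$-slot $(i,s)$ of class $e$. Its vertical members lie in $V_j(P)$ and must contain old $x$-slot $i$ and old $y$-slot $t$. Its horizontal members lie in $H_i(P)$ and must contain old $x$-slot $j$ and old $y$-slot $s$. The two parts therefore contribute at most $Z_{v_d}(b,f)$ and $Z_{h_b}(d,e)$, and since $Z\ge0$ this dominates the one-sided cases.

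The main obstacle is the orientation bookkeeping rather than any real difficulty. One has to check carefully which old coordinate becomes which new one under transposition, so that each $Z$ entry is indexed as (row $=x$-class, column $=y$-class). I would verify this once against the explicit list formulas above before writing the three cases.
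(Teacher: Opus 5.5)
Your proposal is correct and takes the same route as the paper. The paper obtains feasibility and the $X'$ bound from \Cref{lem:lp-crossbar-loads} by maximizing over slot classes, gets $Y'$ by swapping coordinates, and bounds $Z'$ by placing the horizontal part in old joint class $(j,u)$ and the vertical part in $(i,t)$; you re-run that clique classification explicitly instead of citing the earlier lemma, and your orientation bookkeeping is right.
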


\begin{proof}
The feasibility statement is \Cref{lem:lp-crossbar-loads}, after taking the
maximum over all slots in the relevant classes.

The bound for $X'$ is also the bound from \Cref{lem:lp-crossbar-loads}, again
maximized over the two slot classes. The bound for $Y'$ is the same argument
with the two coordinates interchanged. For the joint load, fix a new $x$-slot
$(j,t)$ and a new $y$-slot $(i,u)$. A clique using both slots has a horizontal
part using the old joint slot class $(j,u)$ and a vertical part using the old
joint slot class $(i,t)$. Empty parts contribute zero, so the displayed upper
bound follows after passing to the classes of $i,j,t,u$.
\end{proof}

Since $P_r$ is obtained by a horizontal-vertical construction, every $P_r$ with
$r\ge 1$ is invariant under swapping the two coordinates. If $w$ is a
weighting on such a $P_r$, let $w^T$ denote the weighting obtained by this
swap. The coordinate swap sends horizontal copies to vertical copies and
interchanges the two marker systems. Thus $w^T$ has the same total value; its
$X$- and $Y$-loads are interchanged, and its $Z$-matrix is transposed. We will
also use scalar multiples of
weightings, with all weights and all loads scaled by the same factor.

We now describe the four jobs that the recursion keeps separate. A profile has
normalized value $a$ if its total weight is $a q_r$. The table below should be
read as a list of promises a weighting makes to the next round: how much total
value it carries, and how much clique weight can be seen through low and high
slots.

The profile $M_r$ is the carrier. It is the one we can place everywhere at the
next level, because its $x$-load is at most $1/2$ in both classes. The profile
$C_r$ is the stable source of weight described above: it has value $5/8$, puts
weight on the low $x$-side, and leaves the high $x$-side empty. The profile
$R_r$ softens a carrier by lowering the high $x$-load, and $D_r$ carries extra
value while keeping enough control in the joint loads. The recursion moves
weight through these three supporting shapes until it returns to a larger
value for the $M$-profile.

\begin{equation}\label{eq:recursive-profile-caps}
\begin{array}{c|c|c|c}
\text{profile}&X&Y&Z\\ \hline
M&
(1/2,1/2)&(1,1)&
\begin{pmatrix}1/2&1/2\\1/2&1/2\end{pmatrix}\\[2mm]
R&
(1/2,1/4)&(1/2,1/2)&
\begin{pmatrix}1/2&1/2\\1/4&1/4\end{pmatrix}\\[2mm]
C&
(1,0)&(1/2,1/2)&
\begin{pmatrix}1/2&1/2\\0&0\end{pmatrix}\\[2mm]
D&
(1,3/4)&(1/2,1/2)&
\begin{pmatrix}1/2&1/2\\1/4&1/4\end{pmatrix}.
\end{array}
\end{equation}
The entries in the $X$- and $Y$-columns are ordered as
$(\mathsf L,\mathsf H)$.

\begin{lemma}\label{lem:source-profile}
There is a profile $C_1$ on $P_1$ with normalized value $5/8$ and with the
$C$-bounds in \eqref{eq:recursive-profile-caps}. The zero vector on $P_1$
serves as $M_1,R_1,D_1$.
\end{lemma}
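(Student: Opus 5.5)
The plan is to take for $C_1$ the profile $U$ from \eqref{eq:p3-profiles}. This profile puts weight $1/2$ on the five-cycle vertices $\mathcal C$ in each of the four vertical copies $V_j(P_0)$ of $P_1$, and weight $0$ on everything else. Its value is $4\cdot 5\cdot \tfrac12=10$. Since $q_1=16$, the normalized value is $10/16=5/8$, as required.

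Feasibility is immediate because $P_1$ is triangle-free (proof of \Cref{thm:counterexample}). Every clique is therefore a vertex or an edge, and every edge has weight at most $1/2+1/2=1$.

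Next I check the three load bounds against the $C$-row of \eqref{eq:recursive-profile-caps}, using the list formulas for $C(P)$.

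\emph{$x$-loads.} For vertical copies, $X'(V_j(R))=\{j\}\times Y(R)$. The $Y$-lists of the cycle vertices are $C_0,Q\mapsto 0$, $C_1,C_2\mapsto 1$ and $C_3\mapsto 01$. Hence every weighted rectangle uses only $x$-slots $(j,t)$ with $t\in\{0,1\}=\mathsf L$, and the high class has $X(\mathsf H)=0$. In the low class, every clique has weight at most $1$ by feasibility, which gives $X(\mathsf L)\le 1$. This agrees with the final-digit loads $(1,1,0,0)$ already recorded for $U$.

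\emph{$y$-loads.} For vertical copies, $Y'(V_j(R))=X(R)\times[4]$. Fix a $y$-slot $(i,u)$. A clique all of whose rectangles use it cannot contain weighted rectangles from two distinct vertical copies, since distinct vertical copies are disjoint. Within one vertical copy, the weighted rectangles using it all have $i\in X(R)$, and by \Cref{lem:base-x} they are pairwise nonadjacent. So the clique contains at most one weighted rectangle, and $Y(\mathsf L),Y(\mathsf H)\le 1/2$. This is the bound $Y_U\le 1/2$ in \eqref{eq:p3-extra-loads}.

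\emph{Joint loads.} A clique using both an $x$-slot and a $y$-slot is in particular a clique using the $x$-slot. If the $x$-slot is high, the clique has weight $0$. If it is low, the $y$-slot bound gives weight at most $1/2$. This yields the matrix $\begin{pmatrix}1/2&1/2\\0&0\end{pmatrix}$, matching $Z_U\le(1/2,1/2,0,0)$.

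For the second assertion, the zero vector is feasible and all of its loads vanish, so it meets every cap for $M,R,D$ in \eqref{eq:recursive-profile-caps}, with normalized value $0$.

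I expect the $y$-load step to be the only point needing care. It is the one place where the argument uses that vertices sharing an $X$-slot are nonadjacent (\Cref{lem:base-x}), together with the disjointness of distinct vertical copies. The other checks are direct readings of the list formulas.
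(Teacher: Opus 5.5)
Your proposal is correct and matches the paper's proof: the same weighting ($1/2$ on the five-cycle vertices in each vertical copy of $P_0$, i.e.\ the profile $U$), the same reading of the low/high $x$-classes from the $Y$-lists, and the same use of the fact that base vertices sharing an $X$-slot are nonadjacent to bound the $y$-loads and joint loads by $1/2$. The only cosmetic difference is that you derive feasibility and $X(\mathsf L)\le 1$ from the triangle-freeness of $P_1$. The paper instead argues that weighted rectangles lie in a single copy of the five-cycle, whose cliques have at most two vertices.
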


\begin{proof}
In each of the four vertical copies of $P_0$ inside $P_1$, give weight $1/2$
to the five-cycle vertices $\mathcal C=\{C_0,C_1,C_2,C_3,Q\}$, and give weight
zero to every other rectangle. The total value is
\[ 4\cdot 5\cdot\frac12=10=\frac58q_1. \]
This weighting is feasible: positive weight appears only inside vertical
copies of the five-cycle, and every clique in a five-cycle has at most two
vertices.

Only the old $Y$-slots $0$ and $1$ occur among these weighted vertices, so the
high $x$-class has load zero. In a low $x$-slot, the possible weighted
vertices in one vertical copy form either the path $C_0-Q-C_3$ or the path
$C_1-C_2-C_3$; every clique in either path has weight at most one. Thus
$X_{C_1}\le(1,0)$.

Fixing a $y$-slot in a vertical copy fixes an old $X$-slot of the base
configuration. By \Cref{lem:base-x}, the base vertices using one old $X$-slot
are independent, so such a clique contains at most one weighted vertex. This
gives $Y_{C_1}\le(1/2,1/2)$. If both an $x$-slot and a $y$-slot are fixed, the
same argument gives at most one weighted vertex; in the high $x$-class there
is no weighted vertex at all. This gives the displayed $Z$-bounds for $C$.
The zero vector plainly satisfies the remaining profile bounds.
\end{proof}

With the four profiles in hand, the recursive move is easy to state before
checking. The source $C_r$ is preserved. A diluted copy of $M_r$, paired with a
half-weight transposed copy of $C_r$, makes the more balanced helper
$R_{r+1}$. A half-weight $C_r$ and the old $D_r$ make the next $D$-profile.
Finally, $R_r,C_r,D_r$ are assembled into the next carrier $M_{r+1}$. The
figure records the roles of the four moves; the precise low/high placement in
horizontal and vertical copies is given immediately afterward.

\begin{figure}[H]
\centering
\begin{tikzpicture}[
font=\small,
input/.style={draw,rounded corners=2pt,fill=gray!6,inner xsep=5pt,inner ysep=3pt,minimum width=2.6cm},
output/.style={draw,rounded corners=2pt,fill=blue!5,inner xsep=5pt,inner ysep=3pt,minimum width=1.5cm},
movelabel/.style={font=\scriptsize,fill=white,inner sep=1pt},
arrow/.style={-{Latex[length=2mm]},line width=.45pt}
]
\node[input] (i1) at (0,0) {$C_r$};
\node[output] (o1) at (6.2,0) {$C_{r+1}$};
\draw[arrow] (i1) -- node[movelabel,above]{preserve source} (o1);

\node[input] (i2) at (0,-1.05) {$\frac12M_r,\ \frac12C_r^T$};
\node[output] (o2) at (6.2,-1.05) {$R_{r+1}$};
\draw[arrow] (i2) -- node[movelabel,above]{soften $M$} (o2);

\node[input] (i3) at (0,-2.1) {$\frac12C_r,\ D_r^T$};
\node[output] (o3) at (6.2,-2.1) {$D_{r+1}$};
\draw[arrow] (i3) -- node[movelabel,above]{carry value} (o3);

\node[input] (i4) at (0,-3.15) {$R_r,\ C_r,\ D_r$};
\node[output] (o4) at (6.2,-3.15) {$M_{r+1}$};
\draw[arrow] (i4) -- node[movelabel,above]{build carrier} (o4);
\end{tikzpicture}
\caption{The four moves in the recursive fractional construction. The diagram
shows which old profiles feed each new profile; the exact low/high
horizontal-vertical placement is given in \eqref{eq:recursive-substitutions}.}
\label{fig:recursive-profile-flow}
\end{figure}

\begin{lemma}\label{lem:recursive-profile-step}
Assume that, on $P_r$ with $r\ge 1$, there are profiles
$M_r,R_r,C_r,D_r$ satisfying \eqref{eq:recursive-profile-caps}, with
normalized values $m_r,r_r,c,d_r$, where $c=5/8$. Then there are profiles
$M_{r+1},R_{r+1},C_{r+1},D_{r+1}$ satisfying the same bounds on $P_{r+1}$,
with $c$ unchanged and
\[
r_{r+1}=\frac{m_r+c}{2},\qquad
d_{r+1}=\frac{d_r}{2}+\frac{3c}{4},\qquad
m_{r+1}=\frac{r_r+c+d_r}{2}.
\]
\end{lemma}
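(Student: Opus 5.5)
The plan is a direct application of \Cref{lem:two-class-loads}. For each of the four new profiles I would choose horizontal weightings $h_{\mathsf L},h_{\mathsf H}$ and vertical weightings $v_{\mathsf L},v_{\mathsf H}$ on $P_r$. Each is a scalar multiple of one of $M_r,R_r,C_r,D_r$ or of a transpose, and transposes are available since $r\ge1$. I would then check three things:
\begin{itemize}
\item the four mixed sums $X_{h_b}(d)+X_{v_d}(b)\le1$;
\item the three displayed bounds for $X'$, $Y'$ and $Z'$ against the corresponding row of \eqref{eq:recursive-profile-caps};
\item the value.
\end{itemize}
For the transposes I use $X_{w^T}=Y_w$, $Y_{w^T}=X_w$ and $Z_{w^T}(s,t)=Z_w(t,s)$.

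For the value, note that each class contains $q_r/2$ copies in each direction. A profile of normalized value $a$ placed in all copies of one class and one direction therefore contributes $a q_r\cdot q_r/2$. Since $q_{r+1}=q_r^2$, this is $a/2$ of normalized value on $P_{r+1}$.

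Two of the four moves can be fixed at once.

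\textbf{The source $C_{r+1}$.} Put $C_r^T$ in every vertical copy and zero in every horizontal copy, giving value $c$.
\begin{itemize}
\item The mixed sums are $X_{C^T}(b)=Y_C(b)=1/2$.
\item $X'_f\le Y_{C^T}(f)=X_C(f)=(1,0)$.
\item $Y'_e\le X_{C^T}(b)=1/2$.
\item $Z'_{f,e}\le Z_C(f,b)$, which is $1/2$ for $f=\mathsf L$ and $0$ for $f=\mathsf H$.
\end{itemize}
These are exactly the $C$-caps.

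\textbf{The helper $R_{r+1}$.} Put $\tfrac12M_r$ in every horizontal copy and $\tfrac12C_r^T$ in every vertical copy, giving value $(m_r+c)/2$.
\begin{itemize}
\item The mixed sums are $1/4+1/4$.
\item $X'_f\le\max\{\tfrac12X_C(f),\ \tfrac14+\tfrac12Z_C(f,b)\}=(1/2,1/4)$.
\item $Y'_e\le\max\{\tfrac12,\ \tfrac14+\tfrac14\}=1/2$.
\item $Z'_{f,e}\le\tfrac14+\tfrac12Z_C(f,b)$, which reproduces the $R$-matrix.
\end{itemize}

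\textbf{The profiles $D_{r+1}$ and $M_{r+1}$.} These are where I expect the real difficulty, and I would find the placements by solving the small budget problem class by class.

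For $D_{r+1}$, the value $d_r/2+3c/4$ says that $D_r^T$ occupies one vertical class, contributing $d_r/2$. The remaining $3c/4$ comes from $C$-type weight, for instance a full $C_r^{(T)}$ on one class ($c/2$) plus $\tfrac12C_r^{(T)}$ on one class ($c/4$), or $\tfrac12C_r$ on three classes.

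The binding constraint is $Y'_e\le 1/2$. In the bound $X_{v_d}(b)+Z_{h_b}(d,e)$, every row $b$ carrying horizontal weight with nonzero $Z$ must face vertical profiles whose $X$-load in class $b$ vanishes. The only profile with a vanishing class load is $C_r$, whose $X_{C}(\mathsf H)=0$. So I would try putting the nonzero horizontal weight in the high rows and using untransposed $\tfrac12C_r$ or $C_r$ vertically. I would then test the alternatives by the same table until all of $X'\le(1,3/4)$, $Y'\le(1/2,1/2)$ and the $D$-matrix hold.

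For $M_{r+1}$, with value $(r_r+c+d_r)/2$, each of $R_r,C_r,D_r$, possibly transposed, sits on exactly one of the four class slots, and the fourth slot is zero. Since the required $Y'$ is only $(1,1)$, the delicate constraints are the mixed sums and $X'\le(1/2,1/2)$. This forces the vertical profiles to have $Y$-loads at most $1/2$, i.e.\ $R_r,C_r,D_r$ untransposed vertically. It also forces every pair $(b,d)$ to satisfy $X_{h_b}(d)+Z_{v_d}(b,f)\le1/2$.

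My first attempt would be:
\begin{itemize}
\item $C_r$ and $D_r$ in the two vertical classes;
\item $R_r^T$ (or zero) in the horizontal low rows;
\item zero (or $R_r^T$) in the high rows;
\end{itemize}
choosing the orientation so that nonzero horizontal load meets only $Z$-rows equal to $0$ or $1/4$. I would then finish by checking all the remaining entries.
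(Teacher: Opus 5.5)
\paragraph{Verdict.} Your constructions of $C_{r+1}$ and $R_{r+1}$ are exactly the paper's, and your load checks for them are correct. The gap is in $D_{r+1}$ and $M_{r+1}$, which carry the content of the lemma. For these you only describe a search, and the concrete placements you commit to fail the checks you list.

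\paragraph{The profile $M_{r+1}$.} You propose $R_r^T$ in one horizontal class, opposite $C_r,D_r$. But $X_{R^T}=Y_R=(1/2,1/2)$. In the low rows the mixed sum is $1/2+X_{v_d}(\mathsf L)=1/2+1$. In the high rows it meets $X_D(\mathsf H)=3/4$ in whichever vertical class $D_r$ occupies, giving $5/4$. If $D_r$ is high, it also breaks your own half-load test, since $X_{R^T}(\mathsf H)+Z_D(\mathsf H,f)=3/4$. The paper uses the untransposed $R_r$ in the high rows: $(h_{\mathsf L},h_{\mathsf H})=(0,R_r)$ and $(v_{\mathsf L},v_{\mathsf H})=(C_r,D_r)$. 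The point of $R_r$ is its small high load $X_R(\mathsf H)=1/4$. This pairs with $X_D(\mathsf H)=3/4$ in the mixed sum and with $Z_D(\mathsf H,f)=1/4$ in the new $x$-load. Meanwhile $X_R(\mathsf L)=1/2$ faces the empty high side of $C_r$, where $X_C(\mathsf H)=Z_C(\mathsf H,f)=0$. Transposing $R_r$ destroys exactly this asymmetry.

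\paragraph{The profile $D_{r+1}$.} The constraint $Y'\le 1/2$ does not require vanishing vertical $X$-loads. It requires only $X_{v_d}(b)+Z_{h_b}(d,e)\le 1/2$. The direction you suggest fails:
\begin{itemize}
\item A vertical $C_r$ already gives the $Y'$ bound $X_C(\mathsf L)=1$.
\item With $\tfrac12C_r$ vertical instead, and horizontal weight only in the high rows, your value count forces a full $C_r$ there. Its transpose is excluded by its $Y$-load $1$.
\item Then $Y'\le 1/2$ forces $D_r^T$ into the high vertical class, since it needs $Z_C(d,e)=0$. The new high $x$-load then picks up $X_C(\mathsf L)+\tfrac12 Z_C(\mathsf H,\mathsf H)=1>3/4$.
\end{itemize}

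The paper spreads the source over both horizontal classes and transposes it vertically: $(h_{\mathsf L},h_{\mathsf H})=(\tfrac12C_r,\tfrac12C_r)$ and $(v_{\mathsf L},v_{\mathsf H})=(\tfrac12C_r^T,D_r^T)$. The checks then go through:
\begin{itemize}
\item $D_r^T$, with $X$-load $(1/2,1/2)$, faces the zero high $Z$-row of $\tfrac12C_r$.
\item $\tfrac12C_r^T$, with $X$-load $(1/4,1/4)$, faces $Z$-entries equal to $1/4$, so $Y'\le 1/2$.
\item The mixed sums are $3/4$ and $1/2$.
\item $X'\le(1,3/4)$ follows from $Y_{D^T}=X_D$ and from $1/2+1/4$.
\item $Z'$ reproduces the $D$-matrix.
\end{itemize}
Without these two explicit placements and their verification, the proposal does not establish the lemma.
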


\begin{proof}
The four profiles at level $r+1$ are obtained by placing level-$r$ profiles
in the low and high classes of horizontal and vertical copies as follows:
\begin{equation}\label{eq:recursive-substitutions}
\begin{array}{c|c|c|c}
\text{output}&(h_{\mathsf L},h_{\mathsf H})&
(v_{\mathsf L},v_{\mathsf H})&\text{normalized value}\\ \hline
C_{r+1}&(0,0)&(C_r^T,C_r^T)&c\\
R_{r+1}&(\frac12M_r,\frac12M_r)&
(\frac12C_r^T,\frac12C_r^T)&(m_r+c)/2\\
D_{r+1}&(\frac12C_r,\frac12C_r)&
(\frac12C_r^T,D_r^T)&d_r/2+3c/4\\
M_{r+1}&(0,R_r)&(C_r,D_r)&(r_r+c+d_r)/2.
\end{array}
\end{equation}
The value column follows because each of the two slot classes contains
$q_r/2$ copy indices in each direction.

It remains to check feasibility and the new load bounds. Applying
\Cref{lem:two-class-loads} to the bounds in
\eqref{eq:recursive-profile-caps} gives the following table. In the mixed-load
column, rows are horizontal-copy classes and columns are vertical-copy
classes, both ordered as $\mathsf L,\mathsf H$.

For instance, the row producing $M_{r+1}$ is the payoff. Low horizontal copies
are left empty, high horizontal copies receive $R_r$, low vertical copies
receive $C_r$, and high vertical copies receive $D_r$. The mixed loads are
\[
\begin{pmatrix}
0+1&0+1\\
1/2+0&1/4+3/4
\end{pmatrix},
\]
so no mixed clique exceeds one. The tight high-horizontal/high-vertical entry
is
\[
X_R(\mathsf H)+X_D(\mathsf H)=1/4+3/4=1.
\]
The new $x$-loads ask for a different check. The vertical-only contribution is
at most $1/2$. If the clique also has a horizontal part, then the
high-horizontal/high-vertical case uses the joint-load bound of $D_r$:
\[
X_R(\mathsf H)+Z_D(\mathsf H,f)\le 1/4+1/4=1/2
\]
for $f\in\{\mathsf L,\mathsf H\}$. The other high-horizontal case, with a low
vertical copy, uses the empty high $x$-side of $C_r$:
\[
X_R(\mathsf L)+Z_C(\mathsf H,f)\le 1/2+0=1/2.
\]
Low horizontal copies contribute nothing. Hence every new $x$-load is at most
$1/2$, which is the half-load property needed for the next round. The table
records the same check for all four outputs.

\begin{center}
\small
\setlength{\arraycolsep}{5pt}
\renewcommand{\arraystretch}{1.45}
\begin{tabular}{c|c|c|c|c}
output&mixed loads&$X'$&$Y'$&$Z'$\\ \hline
$C$&
$\begin{pmatrix}1/2&1/2\\1/2&1/2\end{pmatrix}$&
$(1,0)$&$(1/2,1/2)$&
$\begin{pmatrix}1/2&1/2\\0&0\end{pmatrix}$\\[2mm]
$R$&
$\begin{pmatrix}1/2&1/2\\1/2&1/2\end{pmatrix}$&
$(1/2,1/4)$&$(1/2,1/2)$&
$\begin{pmatrix}1/2&1/2\\1/4&1/4\end{pmatrix}$\\[2mm]
$D$&
$\begin{pmatrix}3/4&1/2\\3/4&1/2\end{pmatrix}$&
$(1,3/4)$&$(1/2,1/2)$&
$\begin{pmatrix}1/2&1/2\\1/4&1/4\end{pmatrix}$\\[2mm]
$M$&
$\begin{pmatrix}1&1\\1/2&1\end{pmatrix}$&
$(1/2,1/2)$&$(1,1)$&
$\begin{pmatrix}1/2&1/2\\1/2&1/2\end{pmatrix}$.
\end{tabular}
\end{center}

Every entry in the mixed-load column is at most one, so all four output
profiles are feasible. The remaining columns are exactly the required profile
bounds. This proves the recursive step.
\end{proof}

At this point the geometric part of the recursion is complete: every profile
needed at level $r+1$ has been made from profiles at level $r$. What remains
is to follow the total values around this loop.

\begin{proposition}\label{prop:recursive-profile-values}
For every $r\ge 1$, there is a feasible profile $M_r$ on $P_r$ with
$X_{M_r}(s)\le 1/2$ for every $x$-slot $s$ and with normalized value
\[
m_r=\frac54-\frac{30r+55+5(-1)^r}{2^{r+5}}.
\]
\end{proposition}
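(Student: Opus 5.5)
The plan is to combine the existence statement from \Cref{lem:source-profile} and \Cref{lem:recursive-profile-step} with an explicit solution of the value recursion.

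\emph{Existence and the half-load property.} At level $r=1$, \Cref{lem:source-profile} supplies $C_1$ with normalized value $c=5/8$. The zero vector serves as $M_1,R_1,D_1$, so $m_1=r_1=d_1=0$, and all four profiles satisfy the caps in \eqref{eq:recursive-profile-caps}. Applying \Cref{lem:recursive-profile-step} inductively then produces, for every $r\ge 1$, feasible profiles $M_r,R_r,C_r,D_r$ on $P_r$ obeying \eqref{eq:recursive-profile-caps}. The $M$-row of that table gives $X_{M_r}(\mathsf L),X_{M_r}(\mathsf H)\le 1/2$. Since every $x$-slot is either low or high, this yields $X_{M_r}(s)\le 1/2$ for all $x$-slots $s$. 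Feasibility of $M_r$ is part of the conclusion of the lemma. So only the value formula remains.

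\emph{Solving for $d_r$.} The recursion $d_{r+1}=d_r/2+3c/4=d_r/2+15/32$ decouples, with fixed point $15/16$. Together with $d_1=0$ this gives
\[
d_r=\frac{15}{16}\bigl(1-2^{-(r-1)}\bigr),
\]
which can be checked by a one-line induction.

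\emph{A two-step recursion for $m_r$.} Substituting $r_{r+1}=(m_r+c)/2$ into $m_{r+2}=(r_{r+1}+c+d_{r+1})/2$ eliminates the $R$-values:
\[
m_{r+2}=\frac{m_r}{4}+\frac{3c}{4}+\frac{d_{r+1}}{2}
=\frac{m_r}{4}+\frac{15}{16}-\frac{15}{32}\cdot 2^{-r}.
\]
The initial values are $m_1=0$ and $m_2=(r_1+c+d_1)/2=c/2=5/16$. The claimed formula reproduces both: at $r=1$ it gives $5/4-80/64=0$, and at $r=2$ it gives $5/4-120/128=5/16$.

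\emph{Verifying the closed form.} It then suffices to check that
\[
f(r)=\frac54-\frac{30r+55+5(-1)^r}{2^{r+5}}
\]
satisfies $f(r+2)=f(r)/4+15/16-\tfrac{15}{32}2^{-r}$. The constant parts match, since $5/4=\tfrac14\cdot\tfrac54+\tfrac{15}{16}$. For the remaining terms, multiply through by $2^{r+7}$. The left side contributes $-(30r+60+55+5(-1)^r)$. The right side contributes $-(30r+55+5(-1)^r)-60$. These agree, because $(-1)^{r+2}=(-1)^r$; this period-two term is exactly what the parity structure of a two-step recursion forces.

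The induction from the two base cases, separately along odd and even $r$, then completes the proof. I expect no real obstacle here, since all geometric content was already discharged in \Cref{lem:recursive-profile-step}. The only care needed is in the bookkeeping of indices, namely that the recursions start at $r=1$ with the zero profiles, and in the arithmetic matching the $(-1)^r$ term.
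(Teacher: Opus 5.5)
Your proposal is correct and follows the paper's proof essentially step for step. Existence and the half-load bound come from \Cref{lem:source-profile} and \Cref{lem:recursive-profile-step}; you then solve the $d$-recurrence and eliminate $r_r$ to reach the same two-step recurrence as the paper (yours is shifted by one index), and check the closed form at the base cases $r=1,2$ and by substitution, with all the arithmetic (including $m_2=5/16$ and the $(-1)^r$ term) correct.
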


\begin{proof}
Begin with the profiles from \Cref{lem:source-profile} and apply
\Cref{lem:recursive-profile-step} repeatedly. The values satisfy
\[
r_{r+1}=\frac{m_r+5/8}{2},\qquad
d_{r+1}=\frac{d_r}{2}+\frac{15}{32},\qquad
m_{r+1}=\frac{r_r+5/8+d_r}{2},
\]
with $m_1=r_1=d_1=0$. The second recurrence gives
\[ d_r=\frac{15}{16}\left(1-\frac{1}{2^{r-1}}\right). \]
Combining this with $r_r=(m_{r-1}+5/8)/2$ gives the two-step recurrence\footnote{The two-step form has a simple meaning in the profile loop. The carrier
profile $M$ does not make the next carrier directly: it first passes through
the helper profile $R$, and that $R$ is used one round later to build a new
$M$. Thus, after $d_r$ has been written explicitly, the values of $M$ advance
by two levels at a time. The even and odd levels approach the same limit along
slightly different tracks; this is the source of the alternating term in the
displayed formula.}
\[
m_{r+1}=\frac14m_{r-1}+\frac{15}{16}-\frac{15}{2^{r+4}}.
\]
It remains only to check the closed form. The displayed
formula for $m_r$ holds at $r=1$ and $r=2$, and substituting it into this
two-step recurrence gives the same formula at level $r+1$. The $X$-load bound
is part of the $M$-profile bounds.
\end{proof}

Finally, the point of the $M$-profile is that it can be used uniformly in the
next crossbar. This also gives us the lower bound.

\begin{proposition}\label{prop:recursive-lp-lower}
For every $r\ge 1$,
\[
\frac{\alpha^*(P_{r+1})}{\nu(P_{r+1})}
\ge
\frac52-\frac{30r+55+5(-1)^r}{2^{r+4}}.
\]
Consequently,
\[ \liminf_{r\to\infty}\frac{\alpha^*(P_r)}{\nu(P_r)}\ge\frac52. \]
\end{proposition}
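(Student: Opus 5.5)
The plan is to place the carrier profile $M_r$ from \Cref{prop:recursive-profile-values} in every horizontal copy $H_i(P_r)$ and every vertical copy $V_j(P_r)$ of $P_{r+1}=C(P_r)$. This is exactly the argument used for \Cref{thm:p3-gap} with $A_2$ replaced by $M_r$.

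\textbf{Feasibility.} We apply \Cref{lem:lp-crossbar-loads} with $h_i=v_j=M_r$ for all $i,j$.
\begin{itemize}
\item Each copy carries a feasible weighting, so local cliques are fine.
\item For a mixed clique between $H_i(P_r)$ and $V_j(P_r)$, the lemma bounds the weight by $X_{M_r}(j)+X_{M_r}(i)$.
\item By \Cref{prop:recursive-profile-values}, $X_{M_r}(s)\le 1/2$ for every $x$-slot $s$, so this sum is at most $1/2+1/2=1$.
\end{itemize}
Hence the assembled vector is clique-feasible on $P_{r+1}$. By the equivalence recalled in \Cref{sec:overview}, its total weight is then a lower bound for $\alpha^*(P_{r+1})$.

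\textbf{Value.} There are $2q_r$ copies in total, each of value $m_r q_r$, so the total weight is $2q_r^2 m_r=2m_r q_{r+1}$. By \Cref{prop:independence-recursion}, $\nu(P_{r+1})=q_{r+1}$. Therefore
\[
\frac{\alpha^*(P_{r+1})}{\nu(P_{r+1})}\ge 2m_r=\frac52-\frac{30r+55+5(-1)^r}{2^{r+4}},
\]
where the last equality substitutes the closed form for $m_r$ and doubles it; the denominator $2^{r+5}$ becomes $2^{r+4}$.

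\textbf{Limit.} The correction term is at most $(30r+60)/2^{r+4}$, which tends to $0$ as $r\to\infty$. This gives
\[
\liminf_{r\to\infty}\frac{\alpha^*(P_r)}{\nu(P_r)}\ge\frac52.
\]

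All the substantive work sits in the earlier lemmas. The only point needing care is that the half-load bound of $M_r$ is uniform over all $x$-slots, not just the two classes, and this is exactly what \Cref{prop:recursive-profile-values} provides.
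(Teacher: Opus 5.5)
Your proof is correct and follows the paper's argument essentially verbatim. You place $M_r$ in all $2q_r$ copies, use the uniform half-load bound so every mixed clique weighs at most $1/2+1/2$, and compare the total $2m_rq_{r+1}$ with $\nu(P_{r+1})=q_{r+1}$, where doubling $m_r$ turns the denominator $2^{r+5}$ into $2^{r+4}$.
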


\begin{proof}
Place a copy of the profile $M_r$ from \Cref{prop:recursive-profile-values} in
every horizontal and every vertical copy of $P_r$ inside $P_{r+1}=C(P_r)$.
Local cliques are feasible. A mixed clique meets one horizontal copy and one
vertical copy, and each side contributes at most $1/2$ to the relevant old
$x$-slot class. Hence every mixed clique has weight at most one.

There are $2q_r$ copies, each carrying total weight $m_rq_r$. Since
$q_{r+1}=q_r^2$, the total weight is $2m_rq_{r+1}$. By
\Cref{prop:independence-recursion}, $\nu(P_{r+1})=q_{r+1}$, so the ratio is at
least $2m_r$, which is the displayed bound.
\end{proof}

Since $\alpha^*(P_r)\le\tau(P_r)$ for rectangle families, this also gives
\[ \liminf_{r\to\infty}\frac{\tau(P_r)}{\nu(P_r)}\ge\frac52. \]
This proves the lower-bound half of \Cref{thm:endpoint}. The next section gives
the matching piercing sets, showing that this recursive family cannot exceed
$5/2$.

\section{The matching upper bound}\label{sec:upper-bound}

The previous section built large fractional packings. We now show that the
same recursive family has piercing sets of the matching asymptotic size. The
upper bound comes from using the same lines that were built into the
construction: all chosen points will be intersections of the vertical and
horizontal marker lines.

For a rectangle $R\in P_r$, write $X_r(R)$ and $Y_r(R)$ for the vertical and
horizontal marker slots contained in $R$. The marker-grid point $(s,t)$ is the
intersection of vertical marker $s$ and horizontal marker $t$, and it lies in
$R$ exactly when
\[ s\in X_r(R)\quad\text{and}\quad t\in Y_r(R). \]
Thus a set $S\subseteq[q_r]\times[q_r]$ gives a piercing set for $P_r$ if
\[ S\cap\bigl(X_r(R)\times Y_r(R)\bigr)\ne\emptyset \]
for every $R\in P_r$.

A well-chosen grid set can pierce one level. To pass to the next level, we
keep one additional piece of information: how many chosen points lie on each marker.
If there are $q$ vertical markers and the target size is $(5/2)q$, then the
average number of chosen points on a vertical marker is $5/2$, and the same
average holds for the horizontal markers. Since these counts are integral, the
natural shape is that every marker carries either two or three chosen points,
with the extra point assigned to exactly half of the markers in each
direction. For
$S\subseteq[q]\times[q]$, define
\[
d_S^x(s)=|\{t:(s,t)\in S\}|,\qquad
d_S^y(t)=|\{s:(s,t)\in S\}|.
\]
These are the numbers of chosen grid points on a vertical marker and on a
horizontal marker.

At the next crossbar step, it matters which markers carry the third point. A
horizontal copy and a vertical copy must contribute the
same number of local choices inside their common cell, so that the choices can
be paired. We therefore keep two patterns, allowing the third points to be
placed on one half of the markers or on the other half.

The base case is small enough to see directly. The two patterns below each
have ten chosen entries. Rows are indexed by the vertical slot and columns by
the horizontal slot:
\[
M^0=
\begin{pmatrix}
1&0&1&1\\
0&1&1&0\\
1&1&1&0\\
1&0&0&1
\end{pmatrix},
\qquad
M^1=
\begin{pmatrix}
1&0&0&1\\
1&1&1&0\\
0&1&0&1\\
0&1&1&1
\end{pmatrix}.
\]
Let $S_0^\delta$ be the set of entries equal to one in $M^\delta$.

\begin{figure}[H]
\centering
\begin{tikzpicture}[
scale=.78,
gridline/.style={gray!55,line width=.25pt},
dot/.style={circle,fill=black,inner sep=2.1pt},
lab/.style={font=\scriptsize}
]
\begin{scope}
\node[font=\small] at (1.5,4.65) {$S_0^0$};
\foreach \x in {0,1,2,3,4} {
  \draw[gridline] (\x,0) -- (\x,4);
  \draw[gridline] (0,\x) -- (4,\x);
}
\foreach \t in {0,1,2,3} \node[lab] at (\t+.5,4.25) {$\t$};
\foreach \s/\y in {0/3.5,1/2.5,2/1.5,3/.5}
  \node[lab] at (-.35,\y) {$\s$};
\foreach \s/\t in {0/0,0/2,0/3,1/1,1/2,2/0,2/1,2/2,3/0,3/3}
  \node[dot] at (\t+.5,3.5-\s) {};
\end{scope}

\begin{scope}[xshift=5.2cm]
\node[font=\small] at (1.5,4.65) {$S_0^1$};
\foreach \x in {0,1,2,3,4} {
  \draw[gridline] (\x,0) -- (\x,4);
  \draw[gridline] (0,\x) -- (4,\x);
}
\foreach \t in {0,1,2,3} \node[lab] at (\t+.5,4.25) {$\t$};
\foreach \s/\y in {0/3.5,1/2.5,2/1.5,3/.5}
  \node[lab] at (-.35,\y) {$\s$};
\foreach \s/\t in {0/0,0/3,1/0,1/1,1/2,2/1,2/3,3/1,3/2,3/3}
  \node[dot] at (\t+.5,3.5-\s) {};
\end{scope}
\end{tikzpicture}
\caption{The two base marker-grid patterns. A dot in row $s$ and column $t$
means that the marker-grid point $(s,t)$ is chosen.}
\label{fig:base-piercing-patterns}
\end{figure}

\Cref{fig:base-piercing-geometry-both} places both patterns directly on the
base drawing. The same dots appear in \Cref{fig:base-piercing-patterns}; the
geometric picture shows them as points in the plane, while the grid picture
records the row and column counts. Each panel should be read separately:
either one of these two dot sets is already a piercing set.

\begin{figure}[H]
\centering
\begin{tikzpicture}[
scale=0.18,
rect/.style={line width=.8pt,fill opacity=.16},
marker/.style={gray!65,dashed,line width=.35pt},
pt/.style={circle,fill=black,draw=white,line width=.45pt,inner sep=1.55pt},
lab/.style={font=\scriptsize}
]
\begin{scope}
\node[font=\small] at (15,33.5) {$S_0^0$};
\filldraw[rect,fill=purple!30,draw=purple!70!black] (0,26) rectangle (30,28);
\node[lab,text=purple!70!black] at (15,27) {$T$};

\filldraw[rect,fill=green!35,draw=green!45!black] (0,20) rectangle (16,22);
\node[lab,text=green!35!black] at (8,21) {$U_L$};
\filldraw[rect,fill=green!35,draw=green!45!black] (14,20) rectangle (30,22);
\node[lab,text=green!35!black] at (22,21) {$U_R$};

\filldraw[rect,fill=blue!25,draw=blue!65!black] (0,2) rectangle (8,10);
\node[lab,text=blue!65!black] at (4,6) {$C_0$};
\filldraw[rect,fill=blue!25,draw=blue!65!black] (6,8) rectangle (16,14);
\node[lab,text=blue!65!black] at (9.1,11) {$C_1$};
\filldraw[rect,fill=blue!25,draw=blue!65!black] (14,8) rectangle (24,14);
\node[lab,text=blue!65!black] at (20.9,11) {$C_2$};
\filldraw[rect,fill=blue!25,draw=blue!65!black] (22,2) rectangle (30,12);
\node[lab,text=blue!65!black] at (26,7) {$C_3$};
\filldraw[rect,fill=orange!30,draw=orange!80!black] (6,0) rectangle (24,4);
\node[lab,text=orange!70!black] at (15,2) {$Q$};

\draw[black,line width=.55pt] (0,0) rectangle (30,30);
\foreach \x/\labx in {3/0,11/1,19/2,27/3} {
  \draw[marker] (\x,0) -- (\x,30);
  \node[lab,below] at (\x,0) {$\labx$};
}
\foreach \y/\laby in {3/0,11/1,21/2,27/3} {
  \draw[marker] (0,\y) -- (30,\y);
  \node[lab,left] at (0,\y) {$\laby$};
}
\foreach \x/\y in {
  3/3,3/21,3/27,
  11/11,11/21,
  19/3,19/11,19/21,
  27/3,27/27
} {
  \node[pt] at (\x,\y) {};
}
\end{scope}

\begin{scope}[xshift=43cm]
\node[font=\small] at (15,33.5) {$S_0^1$};
\filldraw[rect,fill=purple!30,draw=purple!70!black] (0,26) rectangle (30,28);
\node[lab,text=purple!70!black] at (15,27) {$T$};

\filldraw[rect,fill=green!35,draw=green!45!black] (0,20) rectangle (16,22);
\node[lab,text=green!35!black] at (8,21) {$U_L$};
\filldraw[rect,fill=green!35,draw=green!45!black] (14,20) rectangle (30,22);
\node[lab,text=green!35!black] at (22,21) {$U_R$};

\filldraw[rect,fill=blue!25,draw=blue!65!black] (0,2) rectangle (8,10);
\node[lab,text=blue!65!black] at (4,6) {$C_0$};
\filldraw[rect,fill=blue!25,draw=blue!65!black] (6,8) rectangle (16,14);
\node[lab,text=blue!65!black] at (9.1,11) {$C_1$};
\filldraw[rect,fill=blue!25,draw=blue!65!black] (14,8) rectangle (24,14);
\node[lab,text=blue!65!black] at (20.9,11) {$C_2$};
\filldraw[rect,fill=blue!25,draw=blue!65!black] (22,2) rectangle (30,12);
\node[lab,text=blue!65!black] at (26,7) {$C_3$};
\filldraw[rect,fill=orange!30,draw=orange!80!black] (6,0) rectangle (24,4);
\node[lab,text=orange!70!black] at (15,2) {$Q$};

\draw[black,line width=.55pt] (0,0) rectangle (30,30);
\foreach \x/\labx in {3/0,11/1,19/2,27/3} {
  \draw[marker] (\x,0) -- (\x,30);
  \node[lab,below] at (\x,0) {$\labx$};
}
\foreach \y/\laby in {3/0,11/1,21/2,27/3} {
  \draw[marker] (0,\y) -- (30,\y);
  \node[lab,left] at (0,\y) {$\laby$};
}
\foreach \x/\y in {
  3/3,3/27,
  11/3,11/11,11/21,
  19/11,19/27,
  27/11,27/21,27/27
} {
  \node[pt] at (\x,\y) {};
}
\end{scope}
\end{tikzpicture}
\caption{Both base marker-grid patterns placed on the base drawing, shown in
separate panels. Each panel is a piercing set for the eight base rectangles.}
\label{fig:base-piercing-geometry-both}
\end{figure}

The patterns must also meet the rectangles while keeping the right row and
column counts. The full check is short; one possible choice of a point for
each rectangle is
\[
\begin{array}{c|c|c}
R&\text{point in }S_0^0&\text{point in }S_0^1\\ \hline
T&(0,3)&(0,3)\\
U_L&(0,2)&(1,2)\\
U_R&(2,2)&(3,2)\\
C_0&(0,0)&(0,0)\\
C_1&(1,1)&(1,1)\\
C_2&(2,1)&(2,1)\\
C_3&(3,0)&(3,1)\\
Q&(2,0)&(1,0).
\end{array}
\]
Each displayed point lies in $X_0(R)\times Y_0(R)$. If
\[ \varepsilon_0(s)=s\pmod 2, \]
then the row and column counts of the two patterns satisfy
\[
d_{S_0^\delta}^x(s)=d_{S_0^\delta}^y(s)
=2+\mathbf 1_{\varepsilon_0(s)=\delta}.
\]
Thus every row and every column contains either two or three chosen points,
and exactly half of the slots receive the third point.

The recursive statement is the following.

\begin{lemma}\label{lem:recursive-piercing-patterns}
For every $r\ge 0$, there is a balanced map
\[ \varepsilon_r:[q_r]\to\{0,1\}, \]
where balanced means that each value occurs $q_r/2$ times. There are also two
marker-grid piercing sets
$S_r^0,S_r^1\subseteq[q_r]\times[q_r]$ such that, for every
$s\in[q_r]$ and $\delta\in\{0,1\}$,
\[
d_{S_r^\delta}^x(s)=d_{S_r^\delta}^y(s)
=2+\mathbf 1_{\varepsilon_r(s)=\delta}.
\]
In particular, $|S_r^\delta|=(5/2)q_r$.
\end{lemma}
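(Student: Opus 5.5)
The proof is by induction on $r$. The base case $r=0$ is the pair $S_0^0,S_0^1$ displayed above with $\varepsilon_0(s)=s\bmod 2$, whose piercing property and row and column counts were already checked. For the inductive step I would build $S_{r+1}^\delta$ cell by cell from copies of $S_r^0,S_r^1$. A new marker-grid point is a pair $((j,t),(i,u))$, where $(j,t)$ is a new vertical slot and $(i,u)$ is a new horizontal slot. I think of the grid as split into $q_r^2$ cells indexed by the outer pair $(j,i)$, with the inner pair $(t,u)$ giving a position inside the cell.

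\emph{What each rectangle needs.} From the list formulas for $C(P)$:
\begin{itemize}
\item $H_i(R)$ contains $((j,t),(i,u))$ exactly when $j\in X_r(R)$ and $u\in Y_r(R)$, with $t$ arbitrary.
\item $V_j(R)$ contains it exactly when $i\in X_r(R)$ and $t\in Y_r(R)$, with $u$ arbitrary.
\end{itemize}
So horizontal copy $i$ only sees the projection of the chosen points onto $(j,u)$ within row $i$. Vertical copy $j$ only sees the projection onto $(i,t)$ within column $j$.

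\emph{Assigning patterns to copies.} For a target $\delta$, I assign to row $i$ the pattern $A_i=S_r^{\varepsilon_r(i)\oplus\delta}\subseteq[q_r]\times[q_r]$, read in coordinates $(j,u)$. I assign to column $j$ the pattern $B_j=S_r^{\varepsilon_r(j)\oplus\delta}$, read in coordinates $(i,t)$. In cell $(j,i)$, the row demands the $u$-set $U_{ij}=\{u:(j,u)\in A_i\}$ and the column demands the $t$-set $T_{ij}=\{t:(i,t)\in B_j\}$. By induction,
\[
|U_{ij}|=2+\mathbf 1_{\varepsilon_r(j)=\varepsilon_r(i)\oplus\delta},\qquad
|T_{ij}|=2+\mathbf 1_{\varepsilon_r(i)=\varepsilon_r(j)\oplus\delta}.
\]
The two indicator conditions are the same condition, $\varepsilon_r(i)\oplus\varepsilon_r(j)=\delta$, so $|U_{ij}|=|T_{ij}|$. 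This equal-count pairing is the step I expect to be the crux; the $\oplus\delta$ twist in both the row and column assignments is chosen precisely to make it hold.

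\emph{Defining the new set.} In each cell I fix a bijection $\pi_{ij}:T_{ij}\to U_{ij}$ and put the points $((j,t),(i,\pi_{ij}(t)))$ for $t\in T_{ij}$ into $S_{r+1}^\delta$. Then:
\begin{itemize}
\item $H_i(R)$ is pierced. Since $A_i$ pierces $P_r$, there is $(j,u)\in A_i$ with $j\in X_r(R)$ and $u\in Y_r(R)$, and cell $(j,i)$ contains a point whose last coordinate is $u$.
\item $V_j(R)$ is pierced symmetrically, using $B_j$.
\end{itemize}

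\emph{Counts.} Each $t\in T_{ij}$ is used exactly once in cell $(j,i)$. So the number of chosen points on vertical marker $(j,t)$ equals $|\{i:(i,t)\in B_j\}|$, the column count of $S_r^{\varepsilon_r(j)\oplus\delta}$ at $t$. By induction this is $2+\mathbf 1_{\varepsilon_r(t)=\varepsilon_r(j)\oplus\delta}$. Likewise, each $u\in U_{ij}$ is used exactly once, so horizontal marker $(i,u)$ carries $|\{j:(j,u)\in A_i\}|=2+\mathbf 1_{\varepsilon_r(u)=\varepsilon_r(i)\oplus\delta}$ points. Setting
\[
\varepsilon_{r+1}(a,b)=\varepsilon_r(a)\oplus\varepsilon_r(b),
\]
both counts become $2+\mathbf 1_{\varepsilon_{r+1}(s)=\delta}$, as required. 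The map $\varepsilon_{r+1}$ is balanced: for each fixed $a$, the map $b\mapsto\varepsilon_r(a)\oplus\varepsilon_r(b)$ is balanced because $\varepsilon_r$ is. Summing the counts over the $q_{r+1}$ markers gives $|S_{r+1}^\delta|=(5/2)q_{r+1}$. The rest is routine bookkeeping: the row-versus-column convention for $d^x,d^y$, and the identification of marker grid points via $\xi',\eta'$.
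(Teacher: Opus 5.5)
Your proposal is correct and follows essentially the same route as the paper. It uses the same assignment of old patterns $S_r^{\varepsilon_r(\cdot)\oplus\delta}$ to the horizontal and vertical copies, the same cell-by-cell bijection between the equally sized local choice sets, and the same map $\varepsilon_{r+1}(a,b)=\varepsilon_r(a)\oplus\varepsilon_r(b)$ for both the counts and balancedness. The only difference is cosmetic: you orient the bijection from the column's $t$-set to the row's $u$-set, while the paper goes the other way.
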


The proof idea is to build the next pattern cell by cell. Consider one cell in
the next crossbar arrangement, where a horizontal copy $H_i$ meets a vertical
copy $V_j$. A point chosen in the horizontal copy has local form $(j,b)$: it
uses outer column $j$ and an old horizontal slot $b$. A point chosen in the
vertical copy has local form $(i,t)$: it uses outer row $i$ and an old
horizontal slot $t$ after the coordinate swap. A new marker-grid point inside
that cell records both local choices by
\[ (j,b),\ (i,t)\quad\longmapsto\quad ((j,t),(i,b)). \]
The figure records this pairing step. The proof will choose the two old
patterns in each copy so that the available $(j,b)$ choices and $(i,t)$
choices come in equal numbers for every pair $(i,j)$; once that happens, any
bijection gives the needed marker-grid points.

\begin{figure}[H]
\centering
\begin{tikzpicture}[
font=\small,
arrow/.style={-{Latex[length=2mm]},line width=.45pt},
box/.style={draw,rounded corners=2pt,fill=gray!5,inner xsep=6pt,inner ysep=5pt},
cell/.style={draw,line width=.45pt,fill=blue!3},
dot/.style={circle,fill=black,inner sep=1.8pt},
tinylabel/.style={font=\scriptsize}
]
\node[box,align=center] (h) at (0,0.85) {
from a horizontal copy $H_i$\\[2pt]
local point $(j,b)$
};
\node[box,align=center] (v) at (0,-0.85) {
from a vertical copy $V_j$\\[2pt]
local point $(i,t)$
};
\node[box,align=center] (pair) at (4.2,0) {
pair $b$ with $t$\\[2pt]
inside the cell
};
\node[box,align=center] (new) at (8.25,0) {
new marker-grid point\\[2pt]
$((j,t),(i,b))$
};

\begin{scope}[xshift=10.75cm,yshift=-1.1cm]
\draw[cell] (0,0) rectangle (2.2,2.2);
\draw[gray!55,line width=.25pt] (.55,0) -- (.55,2.2);
\draw[gray!55,line width=.25pt] (1.1,0) -- (1.1,2.2);
\draw[gray!55,line width=.25pt] (1.65,0) -- (1.65,2.2);
\draw[gray!55,line width=.25pt] (0,.55) -- (2.2,.55);
\draw[gray!55,line width=.25pt] (0,1.1) -- (2.2,1.1);
\draw[gray!55,line width=.25pt] (0,1.65) -- (2.2,1.65);
\coordinate (gridwest) at (0,1.1);
\node[tinylabel] at (1.1,2.48) {new outer cell $(j,i)$};
\node[dot] at (1.55,.72) {};
\node[tinylabel,anchor=west] at (1.7,.72) {inner $(t,b)$};
\node[tinylabel,anchor=north] at (1.55,-.08) {$t$};
\node[tinylabel,anchor=east] at (-.08,.72) {$b$};
\end{scope}

\draw[arrow] (h.east) -- (pair.west);
\draw[arrow] (v.east) -- (pair.west);
\draw[arrow] (pair.east) -- (new.west);
\draw[arrow] (new.east) -- (gridwest);
\end{tikzpicture}
\caption{How one paired local choice becomes a marker-grid point at the next
level. In the new grid, the outer coordinates are $(j,i)$ and the inner
coordinates are $(t,b)$.}
\label{fig:piercing-pairing}
\end{figure}

The first repeated family shows the rule in action. The pairing rule produces
forty marker-grid points, shown in \Cref{fig:p1-piercing-overlay} on the
fatter realization from \Cref{fig:p1-fat}. The inner coordinates in this
picture are old horizontal slots, whose marker positions are
$\eta=(1.4,3.6,7.5,9.5)$. Thus the new $x$-slot $(j,t)$ is drawn at
$30j+5+\eta_t$, and the new $y$-slot $(i,b)$ is drawn at $30i+5+\eta_b$.
This is the coordinate rule from \Cref{fig:piercing-pairing}, now placed
directly on the rectangles.

\begin{figure}[H]
\centering
\begin{tikzpicture}[
scale=1.5, transform shape, 
x=.09cm,y=.09cm,
hrect/.style={fill=blue!35,draw=blue!75!black,line width=.24pt,fill opacity=.20},
vrect/.style={fill=orange!45,draw=orange!80!black,line width=.24pt,fill opacity=.20},
marker/.style={black!55,dash pattern=on 1.1pt off .9pt,line width=.34pt},
pierce/.style={circle,fill=black,draw=white,line width=.15pt,inner sep=.82pt},
copylabel/.style={font=\scriptsize},
slotlabel/.style={font=\tiny,scale=.72,transform shape}
]
\draw[gray!55,line width=.25pt] (-1,-1) rectangle (111,111);

\foreach \i in {0,1,2,3} {
  \pgfmathsetmacro{\yc}{30*\i+10}
  \node[copylabel,anchor=east] at (-3,\yc) {$H_{\i}$};
  \foreach \xa/\xb/\ya/\yb in {
    0/11/9/10,
    0/6/7/8,
    5/11/7/8,
    0/3/1/3.2,
    2/6/2.8/6,
    5/9/2.8/6,
    8/11/1/6,
    2/9/0/2.4
  } {
    \pgfmathsetmacro{\xone}{10*\xa}
    \pgfmathsetmacro{\xtwo}{10*\xb}
    \pgfmathsetmacro{\yone}{30*\i+5+\ya}
    \pgfmathsetmacro{\ytwo}{30*\i+5+\yb}
    \filldraw[hrect] (\xone,\yone) rectangle (\xtwo,\ytwo);
  }
}

\foreach \j in {0,1,2,3} {
  \pgfmathsetmacro{\xc}{30*\j+10}
  \node[copylabel,anchor=south] at (\xc,113) {$V_{\j}$};
  \foreach \xa/\xb/\ya/\yb in {
    0/11/9/10,
    0/6/7/8,
    5/11/7/8,
    0/3/1/3.2,
    2/6/2.8/6,
    5/9/2.8/6,
    8/11/1/6,
    2/9/0/2.4
  } {
    \pgfmathsetmacro{\xone}{30*\j+5+\ya}
    \pgfmathsetmacro{\xtwo}{30*\j+5+\yb}
    \pgfmathsetmacro{\yone}{10*\xa}
    \pgfmathsetmacro{\ytwo}{10*\xb}
    \filldraw[vrect] (\xone,\yone) rectangle (\xtwo,\ytwo);
  }
}

\foreach \j in {0,1,2,3} {
  \foreach \t/\pos in {0/1.4,1/3.6,2/7.5,3/9.5} {
    \pgfmathsetmacro{\mx}{30*\j+5+\pos}
    \draw[marker] (\mx,-1) -- (\mx,111);
    \node[slotlabel,anchor=north] at (\mx,-1.25) {$\t$};
  }
}
\foreach \i in {0,1,2,3} {
  \foreach \b/\pos in {0/1.4,1/3.6,2/7.5,3/9.5} {
    \pgfmathsetmacro{\my}{30*\i+5+\pos}
    \draw[marker] (-1,\my) -- (111,\my);
    \node[slotlabel,anchor=west] at (111.15,\my) {$\b$};
  }
}
\node[slotlabel,anchor=north east] at (-1.15,-1.25) {$t$};
\node[slotlabel,anchor=west] at (111.15,112) {$b$};

\foreach \j/\t/\i/\b in {
  0/0/0/0,0/2/0/2,0/3/0/3,1/0/0/1,1/3/0/2,
  2/0/0/0,2/2/0/1,2/3/0/2,3/0/0/0,3/3/0/3,
  0/1/1/0,0/2/1/3,1/0/1/0,1/1/1/1,1/2/1/2,
  2/1/1/1,2/2/1/3,3/0/1/1,3/1/1/2,3/2/1/3,
  0/0/2/0,0/1/2/2,0/2/2/3,1/1/2/1,1/3/2/2,
  2/0/2/0,2/1/2/1,2/2/2/2,3/1/2/0,3/3/2/3,
  0/0/3/0,0/3/3/3,1/1/3/0,1/2/3/1,1/3/3/2,
  2/0/3/1,2/3/3/3,3/1/3/1,3/2/3/2,3/3/3/3
} {
  \pgfmathsetmacro{\xinner}{(\t==0 ? 1.4 : (\t==1 ? 3.6 : (\t==2 ? 7.5 : 9.5)))}
  \pgfmathsetmacro{\yinner}{(\b==0 ? 1.4 : (\b==1 ? 3.6 : (\b==2 ? 7.5 : 9.5)))}
  \pgfmathsetmacro{\px}{30*\j+5+\xinner}
  \pgfmathsetmacro{\py}{30*\i+5+\yinner}
  \node[pierce] at (\px,\py) {};
}
\end{tikzpicture}
\caption{The first recursive piercing pattern $S_1^0$ overlaid on the
fatter 64-rectangle drawing. The black dots are the forty marker-grid points
produced by the pairing rule; the dashed lines show the marker-grid lines
$x=30j+5+\eta_t$ and $y=30i+5+\eta_b$, labelled by $t$ below the drawing and
by $b$ on the right.}
\label{fig:p1-piercing-overlay}
\end{figure}

The picture suggests the induction. At each level there will again be two
piercing patterns. Numerically, what has to be preserved is the same
two-or-three count on every vertical and horizontal marker.

Here and in the proof, $\oplus$ denotes addition modulo two. The pattern
choices in the proof are arranged so that, inside each cell $(i,j)$, the
horizontal and vertical sides receive their extra local choice under the same
condition, namely $\varepsilon_r(i)\oplus\varepsilon_r(j)=\delta$.

\begin{proof}
For $r=0$, take $\varepsilon_0(s)=s\pmod 2$. The witness table above shows
that both displayed patterns pierce $P_0$, and the row and column counts give
the required formula for $d^x$ and $d^y$.

Assume the statement at level $r$, and put $q=q_r$. Fix
$\delta\in\{0,1\}$. The new pattern will be assembled one pair of copies at a
time. In the horizontal copy with index $i$, use the old pattern
\[ \phi_i=\delta\oplus\varepsilon_r(i), \]
and in the vertical copy with index $j$, use the old pattern
\[ \psi_j=\delta\oplus\varepsilon_r(j). \]

Fix a horizontal index $i$ and a vertical index $j$. Let
\[
A_{i,j}=\{b:(j,b)\in S_r^{\phi_i}\},\qquad
B_{i,j}=\{t:(i,t)\in S_r^{\psi_j}\}.
\]
The row and column counts give
\[
|A_{i,j}|=2+\mathbf 1_{\varepsilon_r(j)=\phi_i}
=2+\mathbf 1_{\varepsilon_r(i)\oplus\varepsilon_r(j)=\delta},
\]
and
\[
|B_{i,j}|=2+\mathbf 1_{\varepsilon_r(i)=\psi_j}
=2+\mathbf 1_{\varepsilon_r(i)\oplus\varepsilon_r(j)=\delta}.
\]
Thus $A_{i,j}$ and $B_{i,j}$ have the same size. This equality is the point of
the choices of $\phi_i$ and $\psi_j$: the local choices available from the two
copies have been counted by the same $\oplus$-condition and can now be matched.
Choose an arbitrary bijection $\beta_{i,j}:A_{i,j}\to B_{i,j}$. For each
$b\in A_{i,j}$, put the new marker-grid point
\[ \bigl((j,\beta_{i,j}(b)),(i,b)\bigr) \]
into $S_{r+1}^\delta$. Doing this over all choices of $i$ and $j$ defines
$S_{r+1}^\delta$.

The points are distinct. For a fixed pair $(i,j)$, the values of $b$ are
distinct and the map $\beta_{i,j}$ is injective. Points from different pairs
have different outer coordinates.

Now let $H_i(R)$ be a horizontal rectangle in $P_{r+1}$. The old pattern
$S_r^{\phi_i}$ pierces $R$, so it contains a point $(j,b)$ with
$j\in X_r(R)$ and $b\in Y_r(R)$. The paired point
$((j,\beta_{i,j}(b)),(i,b))$ lies in $H_i(R)$, because
\[
X_{r+1}(H_i(R))=X_r(R)\times[q],\qquad
Y_{r+1}(H_i(R))=\{i\}\times Y_r(R).
\]
For a vertical rectangle, use the same pairing in the other direction. If
$S_r^{\psi_j}$ pierces $R$ at a point $(i,t)$, then $t\in B_{i,j}$, so there
is a $b\in A_{i,j}$ with $\beta_{i,j}(b)=t$. The resulting point
$((j,t),(i,b))$ lies in $V_j(R)$, using
\[
X_{r+1}(V_j(R))=\{j\}\times Y_r(R),\qquad
Y_{r+1}(V_j(R))=X_r(R)\times[q].
\]
Hence $S_{r+1}^\delta$ pierces every rectangle of $P_{r+1}$.

Define
\[ \varepsilon_{r+1}(j,t)=\varepsilon_r(j)\oplus\varepsilon_r(t). \]
Fix a new $x$-slot $(j,t)$. Its count in $S_{r+1}^\delta$ is the number of
indices $i$ for which $(i,t)\in S_r^{\psi_j}$. By the old count for the
horizontal marker $t$, this is
\[
2+\mathbf 1_{\varepsilon_r(t)=\psi_j}
=2+\mathbf 1_{\varepsilon_r(j)\oplus\varepsilon_r(t)=\delta}
=2+\mathbf 1_{\varepsilon_{r+1}(j,t)=\delta}.
\]
The calculation for a new $y$-slot $(i,b)$ is identical, with $i$ and $b$ in
place of $j$ and $t$.

Finally, $\varepsilon_{r+1}$ is balanced. For each fixed $j$, exactly half of
the values of $t$ make $\varepsilon_r(j)\oplus\varepsilon_r(t)$ equal to
zero, and half make it equal to one, because $\varepsilon_r$ is balanced.
This completes the induction.

The size formula follows by summing the $x$-counts:
\[
|S_r^\delta|=\sum_{s\in[q_r]}d_{S_r^\delta}^x(s)
=2q_r+\frac{q_r}{2}=\frac52q_r.
\]
\end{proof}

The same marker-grid patterns also explain why the number $5/4$ is the
ceiling for the reusable half-load profiles from the previous section. The
profiles $M_r$ were built so that every $x$-slot carries load at most $1/2$.
The piercing patterns show that this condition already imposes the right
upper bound on their possible total value.

\begin{proposition}[The half-load bound]\label{prop:half-load-bound}
Let $w$ be a nonnegative weighting on $P_r$ such that $X_w(s)\le 1/2$ for
every $x$-slot $s$ of $P_r$. Then $w(P_r)\le (5/4)q_r$.
\end{proposition}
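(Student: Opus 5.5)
Take a piercing pattern $S=S_r^0$ from \Cref{lem:recursive-piercing-patterns}, with $|S|=(5/2)q_r$, and charge the weight of every rectangle to a grid point of $S$. The bound will follow from two facts. First, each rectangle $R$ contains some point of $S$. Second, the set of rectangles containing a fixed grid point $(s,t)$ is a clique all of whose members use the $x$-slot $s$.

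For the charging, choose for each $R\in P_r$ one point $p(R)\in S\cap\bigl(X_r(R)\times Y_r(R)\bigr)$; this exists because $S$ is a marker-grid piercing set. Then
\[
w(P_r)=\sum_{p\in S}\ \sum_{R:\,p(R)=p} w(R).
\]
Fix $p=(s,t)\in S$. Every $R$ with $p(R)=p$ contains the grid point $(s,t)$, so these rectangles pairwise intersect and form a clique $K_p$. Each member contains vertical marker $s$, so $s\in X_r(R)$ and $K_p$ lies in the $x$-slot class of $s$. Hence
\[
w(K_p)\le X_w(s)\le \tfrac12 .
\]

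Summing over $p\in S$ gives
\[
w(P_r)\le \tfrac12|S|=\tfrac12\cdot\tfrac52 q_r=\tfrac54 q_r .
\]
Feasibility of $w$ is never used; only the half-load hypothesis and nonnegativity are needed. Nonnegativity is used implicitly, since the rectangles whose charge goes to $p$ form a sub-clique of the full point clique, and $X_w(s)$ bounds every clique in the class, including the empty one.

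The main obstacle is only notational: confirming that $X_w(s)$, as defined in \Cref{sec:finite-lp-gaps}, is a maximum over cliques in which every rectangle uses $x$-slot $s$. Containing the grid point $(s,t)$ means exactly $s\in X_r(R)$ and $t\in Y_r(R)$, as recorded at the start of \Cref{sec:upper-bound}. Once this is checked, the argument is a one-line double count, and the same count with $Y_w$ would give the symmetric statement.
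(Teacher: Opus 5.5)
Your proof is correct and is essentially the paper's argument. Both fix one pattern $S_r^\delta$, assign each rectangle to one grid point of it that pierces it, note that the rectangles assigned to $(s,t)$ form a clique in the $x$-slot class of $s$ and so weigh at most $X_w(s)\le 1/2$, and sum over the $(5/2)q_r$ points. One small correction: nonnegativity is not actually needed, since each charged set $K_p$ is itself a clique in the class of $s$, so $w(K_p)\le X_w(s)$ holds whatever the signs of the weights.
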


\begin{proof}
Choose one of the marker-grid piercing sets $S_r^\delta$ from
\Cref{lem:recursive-piercing-patterns}. For each rectangle of positive
weight, choose one point of $S_r^\delta$ that pierces it, and assign the
rectangle to that point. This is possible because $S_r^\delta$ pierces
$P_r$.

Fix a marker-grid point $(s,t)\in S_r^\delta$. All rectangles assigned to
this point contain the same point of the plane, so they form a clique. They
also all use the $x$-slot $s$. By the definition of $X_w(s)$, the total
weight assigned to $(s,t)$ is at most $X_w(s)\le 1/2$. Summing over the
$|S_r^\delta|=(5/2)q_r$ chosen marker-grid points gives
$w(P_r)\le (1/2)(5/2)q_r=(5/4)q_r$.
\end{proof}

Thus the value reached in \Cref{prop:recursive-profile-values} is the largest
possible asymptotic value for any half-load weighting. When such a weighting
is placed in both the horizontal and vertical copies at the next crossbar
step, this $5/4$ ceiling becomes the $5/2$ endpoint in
\Cref{thm:endpoint}.

\begin{proposition}\label{prop:matching-upper-bound}
For every $r\ge 0$,
\[
\tau(P_r)\le \frac52 q_r.
\]
Consequently,
\[
\frac{\tau(P_r)}{\nu(P_r)}\le\frac52
\qquad\text{and}\qquad
\frac{\alpha^*(P_r)}{\nu(P_r)}\le\frac52.
\]
\end{proposition}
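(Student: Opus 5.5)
The bound follows almost directly from \Cref{lem:recursive-piercing-patterns}. I will take either marker-grid pattern, say $S_r^0\subseteq[q_r]\times[q_r]$, and turn it into an actual point set in the plane. For each pair $(s,t)\in S_r^0$ I place the point $(\xi_s,\eta_t)$, where $\xi_s$ and $\eta_t$ are the $s$th vertical and $t$th horizontal marker coordinates of $P_r$. These coordinates are produced recursively by the crossbar formulas $\xi'_{(j,t)}=M\xi_j+\eta_t$ and $\eta'_{(i,t)}=M\xi_i+\eta_t$.

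The point $(\xi_s,\eta_t)$ lies in a rectangle $R\in P_r$ exactly when $s\in X_r(R)$ and $t\in Y_r(R)$. This holds because the list $X_r(R)$ is by definition the set of vertical markers contained in the $x$-interval of $R$, and likewise for $Y_r(R)$. The paper records this equivalence at the start of \Cref{sec:upper-bound}, and it needs nothing beyond the separation property preserved by the crossbar step. Since $S_r^0$ meets $X_r(R)\times Y_r(R)$ for every $R$, the resulting point set pierces $P_r$. Hence
\[
\tau(P_r)\le |S_r^0|=\tfrac52 q_r .
\]

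The two ratio bounds then follow from earlier results. By \Cref{prop:independence-recursion}, $\nu(P_r)=\alpha(P_r)=q_r$, so $\tau(P_r)/\nu(P_r)\le 5/2$. The clique-cover inequality $\alpha^*(G(\R))\le\tau(\R)$ from \Cref{sec:overview} gives $\alpha^*(P_r)\le\tau(P_r)$. For completeness I would sketch why that inequality holds: by the Helly property, the rectangles containing a fixed piercing point form a clique. Assigning each rectangle to one piercing point that meets it partitions $P_r$ into at most $\tau(P_r)$ cliques. Summing the LP constraints over these cliques bounds the total weight of any feasible vector by $\tau(P_r)$.

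I expect no real obstacle here, since the substantive work is already contained in \Cref{lem:recursive-piercing-patterns}. The one point to handle with care is justifying that abstract membership in $X_r(R)\times Y_r(R)$ coincides with geometric containment of the grid point at every level. This needs the marker lists recorded after \Cref{lem:crossing-rule} to be exactly the sets of marker lines inside each rectangle, which holds by induction from \Cref{prop:base-realization} together with the list formulas $X'(H_i(R))=X(R)\times[q]$ and the others given there.
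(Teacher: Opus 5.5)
Your proposal is correct and is essentially the paper's proof: realize either pattern $S_r^\delta$ from \Cref{lem:recursive-piercing-patterns} as marker-grid points to get $\tau(P_r)\le\frac52 q_r$, then use $\nu(P_r)=q_r$ and $\alpha^*(P_r)\le\tau(P_r)$. One small quibble: rectangles through a common point form a clique trivially, so that step does not use the Helly property. Helly is needed only for the converse, namely that every clique is pierced by a single point, which is why the clique and point relaxations coincide.
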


\begin{proof}
By \Cref{lem:recursive-piercing-patterns}, either set $S_r^\delta$ gives
$(5/2)q_r$ marker-grid points that pierce $P_r$. Thus
$\tau(P_r)\le(5/2)q_r$. By \Cref{prop:independence-recursion},
$\nu(P_r)=q_r$, giving the first ratio bound. The LP bound follows from
$\alpha^*(P_r)\le\tau(P_r)$.
\end{proof}

\begin{proof}[Completion of \Cref{thm:endpoint}]
\Cref{prop:recursive-lp-lower} gives
\[
\liminf_{r\to\infty}\frac{\alpha^*(P_r)}{\nu(P_r)}\ge\frac52.
\]
Since $\alpha^*(P_r)\le\tau(P_r)$, this also gives
\[
\liminf_{r\to\infty}\frac{\tau(P_r)}{\nu(P_r)}\ge\frac52.
\]
The upper bounds in \Cref{prop:matching-upper-bound} give the reverse
inequalities for both limsups. Therefore both limits exist and equal $5/2$.
\end{proof}

\section{Interpolation}\label{sec:interpolation}

We end with a small adjustment argument. The recursive families give ratios
arbitrarily close to $5/2$. If a desired rational value $t$ is below $5/2$,
we may first choose a family whose ratio is above $t$, and then lower the
ratio by adding isolated rectangles. An isolated rectangle contributes one to
the packing number, one to the piercing number, and one to the fractional
packing optimum. Thus it has ratio exactly one, and adding many such
rectangles lets us tune an existing ratio downward.

We shall use this only for separated disjoint unions. If two families
$\mathcal A$ and $\mathcal B$ are placed in far apart regions of the plane,
then
\[
\nu(\mathcal A\sqcup\mathcal B)=\nu(\mathcal A)+\nu(\mathcal B),
\]
\[
\tau(\mathcal A\sqcup\mathcal B)=\tau(\mathcal A)+\tau(\mathcal B),
\]
and
\[
\alpha^*(\mathcal A\sqcup\mathcal B)
=\alpha^*(\mathcal A)+\alpha^*(\mathcal B).
\]
For $\nu$, maximum disjoint subfamilies from the two sides can be combined,
and any disjoint subfamily splits into its two parts. For $\tau$, no point can
hit rectangles on both sides once the two regions are separated. For
$\alpha^*$, every clique lies entirely in one side, so the clique relaxation
splits into two independent linear programs.

\begin{proof}[Proof of \Cref{thm:interpolation}]
Let $X$ denote either $\alpha^*$ or $\tau$. We first consider a rational
number $t$ with $1<t<5/2$. By \Cref{thm:endpoint}, there is a member
$P_r$ of the recursive family such that
\[
\frac{X(P_r)}{\nu(P_r)}>t.
\]
For $X=\alpha^*$ this follows directly from the first limit in
\Cref{thm:endpoint}; for $X=\tau$ it follows from the second. Write
\[
a=\nu(P_r),\qquad b=X(P_r).
\]
Here $a$ is a positive integer. Also, $b$ is rational: for $X=\tau$ it is an
integer, and for $X=\alpha^*$ it is the optimum of a finite linear program
with rational coefficients.

Take $M$ separated copies of $P_r$ and add $N$ further isolated rectangles,
also placed far away from all the copies and from one another. By the
additivity just discussed, the resulting family has ratio
\[
\frac{Mb+N}{Ma+N}.
\]
We want this to equal $t$. Solving for $N$ gives
\[
N=M\frac{b-ta}{t-1}.
\]
Since $b/a>t$ and $t>1$, the factor $(b-ta)/(t-1)$ is positive. It is also
rational, so we choose $M$ large enough that $N$ is a positive integer. Then
the separated union $\R$ of these $M$ copies of $P_r$ and these $N$ isolated
rectangles satisfies
\[
\frac{X(\R)}{\nu(\R)}=t.
\]

Finally, the endpoint $t=1$ is obtained by any nonempty family of pairwise
disjoint isolated rectangles. Applying the argument once with $X=\alpha^*$
and once with $X=\tau$ proves both assertions.
\end{proof}

\section{Conclusion}\label{sec:conclusion}

Ajwani, Gajjala, Raman, and Ray
\cite{ajwani2026counterexamplewegnersconjectureaxisparallel} recently gave
the first counterexample to Wegner's conjecture and the finite LP benchmark
discussed above. The construction
here takes that development in a more compact and recursive direction. Its
starting point is an eight-rectangle base whose independent sets are
controlled by a few ordered slots, while the geometry remains triangle-free.
The slot lists bound independent sets, the disjointness rules give the
triangle-free 64-rectangle arrangement, and the second list system carries the
information needed for later recursive levels.

This turns the 64-rectangle counterexample into the first level of a recursive
family. The same horizontal-vertical arrangement that proves Wegner's
conjecture false also creates the regularity needed to support fractional
weightings and to construct piercing sets. At the finite level, this gives a
clique-relaxation gap larger than the Ajwani-Gajjala-Raman-Ray benchmark.
In the limit, the lower and upper arguments meet: weights are controlled by
slot loads, while piercing sets are controlled by marker-grid pairings.
Together they show that the recursive family has a precise limiting value,
\[
\lim_{r\to\infty}\frac{\alpha^*(P_r)}{\nu(P_r)}
=\lim_{r\to\infty}\frac{\tau(P_r)}{\nu(P_r)}
=\frac52.
\]
The interpolation argument then turns this limiting behavior into exact
finite realizations of every rational value below $5/2$ as each of these two
ratios.

Several natural questions remain. On the packing-piercing side, Gyárfás and
Lehel \cite{gyarfas1985coveringcoloring} asked for a constant bound on
$\tau(\R)/\nu(\R)$ for rectangle families. The known lower side has moved
from the $5/3$ barrier of Fon-Der-Flaass and Kostochka
\cite{FonDerFlaass_1993}, through the examples reported by Correa,
Feuilloley, Pérez-Lantero, and Soto
\cite{correa2015independenthitting} with $\tau(\R)=2\nu(\R)-4$, past the
first counterexample of Ajwani, Gajjala, Raman, and Ray
\cite{ajwani2026counterexamplewegnersconjectureaxisparallel}, and now to the
limit $5/2$ for the recursive family constructed here. This value is the
endpoint proved for our construction; the global supremum for all rectangle
families remains open. Known general upper bounds are still much larger, with
the current line including the $O(\nu(\R)(\log\log\nu(\R))^2)$ bound of
Correa, Feuilloley, Pérez-Lantero, and Soto
\cite{correa2015independenthitting}.

On the algorithmic side, the standard point relaxation for Maximum Independent
Set of Rectangles has long been tied to approximation algorithms, beginning
with the framework of Chalermsook and Chuzhoy
\cite{chalermsook2009maximumindependentrectangles}. Recent algorithms achieve
better constant factors, including the $(2+\epsilon)$ approximation of
Gálvez, Khan, Mari, Mömke, Reddy, and Wiese
\cite{galvez2021twoepsilonmisr}. The lower bound here shows that the standard
point, equivalently clique, relaxation cannot by itself certify approximation
factors below $5/2$. It would be interesting to determine the true integrality
gap of this relaxation, to find smaller finite examples with large gap, and to
understand which parts of the crossbar mechanism are essential.

\paragraph{AI usage acknowledgment} The authors used OpenAI’s Codex to format parts of the text and generate the figures. The final draft was reviewed by the authors.

\bibliographystyle{alpha}
\bibliography{references}

\clearpage
\appendix
\definecolor{crossTypeAA}{HTML}{8F1216}
\definecolor{crossTypeAAFill}{HTML}{FBE3E1}
\definecolor{crossTypeAB}{HTML}{0072B2}
\definecolor{crossTypeABFill}{HTML}{EAF3FA}
\definecolor{crossTypeBA}{HTML}{E69F00}
\definecolor{crossTypeBAFill}{HTML}{FFF4DD}
\definecolor{crossTypeBB}{HTML}{CC79A7}
\definecolor{crossTypeBBFill}{HTML}{F9ECF4}

\section{Graph schematics for the first crossbar}\label{app:first-crossbar-schematics}

This appendix records three schematic pictures for the first repeated family
$P_1=C(P_0)$. The first marks the crossings in the rectangle drawing, the
second enlarges one crossing of each type, and the third keeps only the graph
structure that appears inside those crossings.

\Cref{fig:app-crossings-geometry} repeats the compressed drawing of $P_1$ from
\Cref{fig:p1-rectangles}. A crossing is one of the highlighted squares where a
horizontal copy strip and a vertical copy strip meet. These sixteen crossings
are the geometric source of the $4$ by $4$ board used in the graph schematic
below.

\begin{figure}[H]
\centering
\begin{tikzpicture}[
scale=.82,
hrect/.style={fill=blue!35,draw=blue!75!black,line width=.22pt,fill opacity=.28},
vrect/.style={fill=orange!42,draw=orange!80!black,line width=.22pt,fill opacity=.28},
strip/.style={fill=gray!8,draw=gray!45,line width=.18pt},
marker/.style={gray!45,densely dashed,line width=.18pt},
crossing/.style={fill=teal!60,draw=none,fill opacity=.30,rounded corners=.5pt},
crossborder/.style={draw=teal!80!black,line width=.62pt,rounded corners=.5pt},
copylabel/.style={font=\scriptsize}
]
\def\s{0.4}
\def\w{0.68}

\draw[gray!55,line width=.25pt] (0,0) rectangle (12,12);

\foreach \i/\xi in {0/3,1/11,2/19,3/27} {
  \pgfmathsetmacro{\c}{\s*\xi}
  \pgfmathsetmacro{\a}{\c-\w/2}
  \pgfmathsetmacro{\b}{\c+\w/2}
  \draw[strip] (0,\a) rectangle (12,\b);
  \node[copylabel,anchor=east] at (-.18,\c) {$H_{\i}$};
}

\foreach \j/\xi in {0/3,1/11,2/19,3/27} {
  \pgfmathsetmacro{\c}{\s*\xi}
  \pgfmathsetmacro{\a}{\c-\w/2}
  \pgfmathsetmacro{\b}{\c+\w/2}
  \draw[strip] (\a,0) rectangle (\b,12);
  \node[copylabel,anchor=south] at (\c,12.18) {$V_{\j}$};
}

\foreach \i/\xi in {0/3,1/11,2/19,3/27} {
  \pgfmathsetmacro{\row}{\s*\xi-\w/2}
  \foreach \xa/\xb/\ya/\yb in {
    0/30/26/28,
    0/16/20/22,
    14/30/20/22,
    0/8/2/10,
    6/16/8/14,
    14/24/8/14,
    22/30/2/12,
    6/24/0/4
  } {
    \pgfmathsetmacro{\xone}{\s*\xa}
    \pgfmathsetmacro{\xtwo}{\s*\xb}
    \pgfmathsetmacro{\yone}{\row+\w*\ya/30}
    \pgfmathsetmacro{\ytwo}{\row+\w*\yb/30}
    \filldraw[hrect] (\xone,\yone) rectangle (\xtwo,\ytwo);
  }
}

\foreach \j/\xi in {0/3,1/11,2/19,3/27} {
  \pgfmathsetmacro{\col}{\s*\xi-\w/2}
  \foreach \xa/\xb/\ya/\yb in {
    0/30/26/28,
    0/16/20/22,
    14/30/20/22,
    0/8/2/10,
    6/16/8/14,
    14/24/8/14,
    22/30/2/12,
    6/24/0/4
  } {
    \pgfmathsetmacro{\xone}{\col+\w*\ya/30}
    \pgfmathsetmacro{\xtwo}{\col+\w*\yb/30}
    \pgfmathsetmacro{\yone}{\s*\xa}
    \pgfmathsetmacro{\ytwo}{\s*\xb}
    \filldraw[vrect] (\xone,\yone) rectangle (\xtwo,\ytwo);
  }
}

\foreach \i/\xih in {0/3,1/11,2/19,3/27} {
  \foreach \j/\xiv in {0/3,1/11,2/19,3/27} {
    \pgfmathsetmacro{\yc}{\s*\xih}
    \pgfmathsetmacro{\xc}{\s*\xiv}
    \fill[crossing] (\xc-\w*.62,\yc-\w*.62) rectangle (\xc+\w*.62,\yc+\w*.62);
  }
}

\foreach \i/\xi in {0/3,1/11,2/19,3/27} {
  \pgfmathsetmacro{\c}{\s*\xi}
  \draw[marker] (0,\c) -- (12,\c);
}
\foreach \j/\xi in {0/3,1/11,2/19,3/27} {
  \pgfmathsetmacro{\c}{\s*\xi}
  \draw[marker] (\c,0) -- (\c,12);
}

\foreach \i/\xih in {0/3,1/11,2/19,3/27} {
  \foreach \j/\xiv in {0/3,1/11,2/19,3/27} {
    \pgfmathsetmacro{\yc}{\s*\xih}
    \pgfmathsetmacro{\xc}{\s*\xiv}
    \draw[crossborder] (\xc-\w*.62,\yc-\w*.62) rectangle (\xc+\w*.62,\yc+\w*.62);
  }
}
\end{tikzpicture}
\caption{The sixteen crossings in the first crossbar. Each highlighted square
is where one horizontal copy strip and one vertical copy strip meet.}
\label{fig:app-crossings-geometry}
\end{figure}

\clearpage

\Cref{fig:app-crossing-zooms} enlarges one crossing of each type. The type is
determined only by the two slot-class sizes $|\mathcal X_j|$ and
$|\mathcal X_i|$; these are the two numbers shown in the board and in the
complete-join schematic below.

\begin{figure}[H]
\centering
\begin{tikzpicture}[
type33/.style={draw=crossTypeAA,fill=crossTypeAAFill,line width=.4pt},
type34/.style={draw=crossTypeAB,fill=crossTypeABFill,line width=.4pt},
type43/.style={draw=crossTypeBA,fill=crossTypeBAFill,line width=.4pt},
type44/.style={draw=crossTypeBB,fill=crossTypeBBFill,line width=.4pt},
panel/.style={rounded corners=4pt},
hrect/.style={fill=blue!35,draw=blue!75!black,line width=.18pt,fill opacity=.28},
vrect/.style={fill=orange!42,draw=orange!80!black,line width=.18pt,fill opacity=.28},
strip/.style={fill=gray!8,draw=gray!45,line width=.14pt},
marker/.style={gray!45,densely dashed,line width=.14pt},
crossing/.style={fill=teal!60,draw=none,fill opacity=.30,rounded corners=.5pt},
crossborder/.style={draw=teal!80!black,line width=.5pt,rounded corners=.5pt},
lab/.style={font=\scriptsize},
tinylabel/.style={font=\tiny}
]
\def\s{0.4}
\def\w{0.68}
\def\r{0.82}
\def\zoom{1.72}

\newcommand{\drawsource}{%
  \draw[gray!55,line width=.18pt] (0,0) rectangle (12,12);
  \foreach \i/\xi in {0/3,1/11,2/19,3/27} {
    \pgfmathsetmacro{\c}{\s*\xi}
    \pgfmathsetmacro{\a}{\c-\w/2}
    \pgfmathsetmacro{\b}{\c+\w/2}
    \draw[strip] (0,\a) rectangle (12,\b);
  }
  \foreach \j/\xi in {0/3,1/11,2/19,3/27} {
    \pgfmathsetmacro{\c}{\s*\xi}
    \pgfmathsetmacro{\a}{\c-\w/2}
    \pgfmathsetmacro{\b}{\c+\w/2}
    \draw[strip] (\a,0) rectangle (\b,12);
  }
  \foreach \i/\xi in {0/3,1/11,2/19,3/27} {
    \pgfmathsetmacro{\row}{\s*\xi-\w/2}
    \foreach \xa/\xb/\ya/\yb in {
      0/30/26/28,
      0/16/20/22,
      14/30/20/22,
      0/8/2/10,
      6/16/8/14,
      14/24/8/14,
      22/30/2/12,
      6/24/0/4
    } {
      \pgfmathsetmacro{\xone}{\s*\xa}
      \pgfmathsetmacro{\xtwo}{\s*\xb}
      \pgfmathsetmacro{\yone}{\row+\w*\ya/30}
      \pgfmathsetmacro{\ytwo}{\row+\w*\yb/30}
      \filldraw[hrect] (\xone,\yone) rectangle (\xtwo,\ytwo);
    }
  }
  \foreach \j/\xi in {0/3,1/11,2/19,3/27} {
    \pgfmathsetmacro{\col}{\s*\xi-\w/2}
    \foreach \xa/\xb/\ya/\yb in {
      0/30/26/28,
      0/16/20/22,
      14/30/20/22,
      0/8/2/10,
      6/16/8/14,
      14/24/8/14,
      22/30/2/12,
      6/24/0/4
    } {
      \pgfmathsetmacro{\xone}{\col+\w*\ya/30}
      \pgfmathsetmacro{\xtwo}{\col+\w*\yb/30}
      \pgfmathsetmacro{\yone}{\s*\xa}
      \pgfmathsetmacro{\ytwo}{\s*\xb}
      \filldraw[vrect] (\xone,\yone) rectangle (\xtwo,\ytwo);
    }
  }
  \foreach \i/\xi in {0/3,1/11,2/19,3/27} {
    \pgfmathsetmacro{\c}{\s*\xi}
    \draw[marker] (0,\c) -- (12,\c);
  }
  \foreach \j/\xi in {0/3,1/11,2/19,3/27} {
    \pgfmathsetmacro{\c}{\s*\xi}
    \draw[marker] (\c,0) -- (\c,12);
  }
}

\newcommand{\zoompanel}[8]{%
  \begin{scope}[xshift=#1cm,yshift=#2cm]
  \draw[panel,#3] (-.16,-.34) rectangle (3.02,3.38);
  \node[lab] at (1.43,3.12) {$#4$};
  \node[tinylabel] at (1.43,-.12) {$#8$};
  \begin{scope}[xshift=.22cm,yshift=.28cm]
    \fill[white] (0,0) rectangle (2.42,2.42);
    \draw[black!35,line width=.25pt] (0,0) rectangle (2.42,2.42);
    \clip (0,0) rectangle (2.42,2.42);
    \pgfmathsetmacro{\xs}{-\zoom*(#5-\r)}
    \pgfmathsetmacro{\ys}{-\zoom*(#6-\r)}
    \begin{scope}[xshift=\xs cm,yshift=\ys cm,scale=\zoom]
      \pgfmathsetmacro{\xl}{#5-\w*.62}
      \pgfmathsetmacro{\xr}{#5+\w*.62}
      \pgfmathsetmacro{\yb}{#6-\w*.62}
      \pgfmathsetmacro{\yt}{#6+\w*.62}
      \drawsource
      \fill[crossing] (\xl,\yb) rectangle (\xr,\yt);
      \foreach \i/\xi in {0/3,1/11,2/19,3/27} {
        \pgfmathsetmacro{\c}{\s*\xi}
        \draw[marker] (0,\c) -- (12,\c);
      }
      \foreach \j/\xi in {0/3,1/11,2/19,3/27} {
        \pgfmathsetmacro{\c}{\s*\xi}
        \draw[marker] (\c,0) -- (\c,12);
      }
      \draw[crossborder] (\xl,\yb) rectangle (\xr,\yt);
    \end{scope}
  \end{scope}
  \draw[black!35,line width=.25pt] (.22,.28) rectangle (2.64,2.7);
  \end{scope}
}

\zoompanel{.1}{3.85}{type33}{K_{3,3}}{1.2}{1.2}{3}{H_0\text{ with }V_0}
\zoompanel{4.0}{3.85}{type34}{K_{3,4}}{1.2}{4.4}{4}{H_1\text{ with }V_0}
\zoompanel{.1}{0}{type43}{K_{4,3}}{4.4}{1.2}{3}{H_0\text{ with }V_1}
\zoompanel{4.0}{0}{type44}{K_{4,4}}{4.4}{4.4}{4}{H_1\text{ with }V_1}
\end{tikzpicture}
\caption{Four enlarged crossings, one of each size type. The colors match the
board colors used in \Cref{fig:app-mixed-cell-graph}.}
\label{fig:app-crossing-zooms}
\end{figure}

We now pass from the geometry of the highlighted crossings to the graph
structure inside them. For a slot $a\in[4]$, write
\[ \mathcal X_a=\{R\in P_0:a\in X(R)\}. \]
In the part of the drawing where $H_i(P_0)$ crosses $V_j(P_0)$, the crossing
rule says that the new edges are exactly the complete bipartite graph between
$H_i(\mathcal X_j)$ and $V_j(\mathcal X_i)$. Since
$|\mathcal X_0|=|\mathcal X_3|=3$ and
$|\mathcal X_1|=|\mathcal X_2|=4$, only four sizes can occur. Here
\[
\mathcal X_0=\{C_0,U_L,T\},\quad
\mathcal X_1=\{C_1,Q,U_L,T\},\quad
\mathcal X_2=\{C_2,Q,U_R,T\},\quad
\mathcal X_3=\{C_3,U_R,T\}.
\]
The figure below colors the $4$ by $4$ board by these four types and then
draws one representative schematic for each type, recording each complete
bipartite graph as a shaded block instead of drawing all its edges. A labeled
copy of $F$ is included as a key for the small unlabeled copies. The yellow
vertices in the representative panels are the slot classes that form the
complete join. We also mark the independent set
$I_0=\{T,U_L,C_0,C_2\}$ in red. In the representative panels the red mark is
put only on the horizontal side.

\begin{figure}[H]
\centering
\begin{tikzpicture}[
scale=.9,
type33/.style={draw=crossTypeAA,fill=crossTypeAAFill,line width=.35pt},
type34/.style={draw=crossTypeAB,fill=crossTypeABFill,line width=.35pt},
type43/.style={draw=crossTypeBA,fill=crossTypeBAFill,line width=.35pt},
type44/.style={draw=crossTypeBB,fill=crossTypeBBFill,line width=.35pt},
panel/.style={rounded corners=4pt,line width=.45pt},
fnode/.style={circle,draw,fill=white,inner sep=1.45pt},
fhit/.style={circle,draw,fill=yellow!45,inner sep=1.45pt},
keynode/.style={circle,draw,fill=white,inner sep=.85pt,font=\tiny},
keyind/.style={circle,draw=red!85!black,fill=red!62,line width=.5pt,inner sep=.85pt,font=\tiny},
iwit/.style={circle,fill=red!75!black,draw=white,line width=.12pt,inner sep=.55pt},
fedge/.style={black!70,line width=.3pt},
joinblock/.style={draw=black!55,fill=white,opacity=.62,rounded corners=3pt,line width=.32pt},
lab/.style={font=\scriptsize},
tinylabel/.style={font=\tiny}
]

\newcommand{\joinpanel}[8]{%
  \begin{scope}[xshift=#1cm,yshift=#2cm]
  \draw[panel,#3] (-1.72,-.88) rectangle (1.72,.9);
  \node[lab] at (0,.67) {$#4$};

  \coordinate (LCzero) at (-1.42,-.24);
  \coordinate (LCone) at (-1.16,.02);
  \coordinate (LCtwo) at (-.88,.02);
  \coordinate (LCthree) at (-.62,-.24);
  \coordinate (LQ) at (-1.02,-.5);
  \coordinate (LUL) at (-1.22,.34);
  \coordinate (LUR) at (-.82,.34);
  \coordinate (LT) at (-1.02,.58);

  \coordinate (RCzero) at (.62,-.24);
  \coordinate (RCone) at (.88,.02);
  \coordinate (RCtwo) at (1.16,.02);
  \coordinate (RCthree) at (1.42,-.24);
  \coordinate (RQ) at (1.02,-.5);
  \coordinate (RUL) at (.82,.34);
  \coordinate (RUR) at (1.22,.34);
  \coordinate (RT) at (1.02,.58);

  \draw[joinblock] (-.28,-.55) rectangle (.28,.42);
  \node[tinylabel,align=center] at (0,-.08) {complete\\join};

  \draw[fedge] (LCzero) -- (LCone) -- (LCtwo) -- (LCthree) -- (LQ) -- (LCzero);
  \draw[fedge] (LUL) -- (LUR);
  \draw[fedge] (RCzero) -- (RCone) -- (RCtwo) -- (RCthree) -- (RQ) -- (RCzero);
  \draw[fedge] (RUL) -- (RUR);

  \foreach \name in {Czero,Cone,Ctwo,Cthree,Q,UL,UR,T} {
    \node[fnode] at (L\name) {};
    \node[fnode] at (R\name) {};
  }
  \foreach \name in {#7} {
    \node[fhit] at (L\name) {};
  }
  \foreach \name in {#8} {
    \node[fhit] at (R\name) {};
  }
  \foreach \name in {Czero,Ctwo,UL,T} {
    \node[iwit] at (L\name) {};
  }
  \node[tinylabel] at (-1.02,-.73) {$|\mathcal X_j|=#5$};
  \node[tinylabel] at (1.02,-.73) {$|\mathcal X_i|=#6$};
  \end{scope}
}

\joinpanel{2.25}{.1}{type33}{K_{3,3}}{3}{3}{Czero,UL,T}{Czero,UL,T}
\joinpanel{6.25}{.1}{type34}{K_{3,4}}{3}{4}{Czero,UL,T}{Cone,Q,UL,T}
\joinpanel{2.25}{-2.05}{type43}{K_{4,3}}{4}{3}{Cone,Q,UL,T}{Czero,UL,T}
\joinpanel{6.25}{-2.05}{type44}{K_{4,4}}{4}{4}{Cone,Q,UL,T}{Cone,Q,UL,T}

\begin{scope}[xshift=.8cm,yshift=2.4cm]
\node[lab] at (1.23,3.7) {crossings};
\foreach \j/\x in {0/0,1/.82,2/1.64,3/2.46} {
  \node[lab] at (\x,3.3) {$V_{\j}$};
}
\foreach \i/\y/\ai in {3/2.64/3,2/1.92/4,1/1.2/4,0/.48/3} {
  \node[lab,anchor=east] at (-.58,\y) {$H_{\i}$};
}
\node[type33,minimum width=.76cm,minimum height=.55cm,font=\tiny] at (0,2.64) {$3\times3$};
\node[type43,minimum width=.76cm,minimum height=.55cm,font=\tiny] at (.82,2.64) {$4\times3$};
\node[type43,minimum width=.76cm,minimum height=.55cm,font=\tiny] at (1.64,2.64) {$4\times3$};
\node[type33,minimum width=.76cm,minimum height=.55cm,font=\tiny] at (2.46,2.64) {$3\times3$};
\node[type34,minimum width=.76cm,minimum height=.55cm,font=\tiny] at (0,1.92) {$3\times4$};
\node[type44,minimum width=.76cm,minimum height=.55cm,font=\tiny] at (.82,1.92) {$4\times4$};
\node[type44,minimum width=.76cm,minimum height=.55cm,font=\tiny] at (1.64,1.92) {$4\times4$};
\node[type34,minimum width=.76cm,minimum height=.55cm,font=\tiny] at (2.46,1.92) {$3\times4$};
\node[type34,minimum width=.76cm,minimum height=.55cm,font=\tiny] at (0,1.2) {$3\times4$};
\node[type44,minimum width=.76cm,minimum height=.55cm,font=\tiny] at (.82,1.2) {$4\times4$};
\node[type44,minimum width=.76cm,minimum height=.55cm,font=\tiny] at (1.64,1.2) {$4\times4$};
\node[type34,minimum width=.76cm,minimum height=.55cm,font=\tiny] at (2.46,1.2) {$3\times4$};
\node[type33,minimum width=.76cm,minimum height=.55cm,font=\tiny] at (0,.48) {$3\times3$};
\node[type43,minimum width=.76cm,minimum height=.55cm,font=\tiny] at (.82,.48) {$4\times3$};
\node[type43,minimum width=.76cm,minimum height=.55cm,font=\tiny] at (1.64,.48) {$4\times3$};
\node[type33,minimum width=.76cm,minimum height=.55cm,font=\tiny] at (2.46,.48) {$3\times3$};
\node[lab,align=center] at (1.23,-.12)
{$H_i(\mathcal X_j)$ joined to $V_j(\mathcal X_i)$};
\end{scope}

\begin{scope}[xshift=6.35cm,yshift=2.75cm]
\draw[gray!60,rounded corners=3pt,line width=.35pt] (-.45,-.9) rectangle (2.5,2.15);
\node[lab] at (1.02,2.42) {base graph $F$};
\coordinate (KCzero) at (0,0);
\coordinate (KCone) at (.65,.5);
\coordinate (KCtwo) at (1.4,.5);
\coordinate (KCthree) at (2.05,0);
\coordinate (KQ) at (1.02,-.58);
\coordinate (KUL) at (.6,1.13);
\coordinate (KUR) at (1.45,1.13);
\coordinate (KT) at (1.02,1.72);
\draw[fedge] (KCzero) -- (KCone) -- (KCtwo) -- (KCthree) -- (KQ) -- (KCzero);
\draw[fedge] (KUL) -- (KUR);
\node[keyind] at (KCzero) {$C_0$};
\node[keynode] at (KCone) {$C_1$};
\node[keyind] at (KCtwo) {$C_2$};
\node[keynode] at (KCthree) {$C_3$};
\node[keynode] at (KQ) {$Q$};
\node[keyind] at (KUL) {$U_L$};
\node[keynode] at (KUR) {$U_R$};
\node[keyind] at (KT) {$T$};
\end{scope}
\end{tikzpicture}
\caption{A schematic view of the complete bipartite pieces between horizontal
and vertical copies in $P_1$. Yellow marks the slot classes that are joined;
red marks the base independent set on the horizontal side.}
\label{fig:app-mixed-cell-graph}
\end{figure}

The red vertices give the matching lower-bound witness for
$\alpha(P_1)=16$. Take these four vertices in each of the four horizontal
copies and take no vertices from the vertical copies. The horizontal copies
are pairwise disjoint, and the chosen vertices are independent inside each
copy of $F$, so this gives an independent set of size $4\cdot4=16$. The
opposite inequality is the board-counting argument proved in
\Cref{prop:independence-recursion}; the schematic here is meant to make the
witness and the complete joins at the crossings visible.

\section{One-copy profile checks}\label{app:profile-checks}

This appendix records the finite checks used in \Cref{sec:finite-lp-gaps}. The
checks all take place inside $P_1$. Since $P_1$ is triangle-free, every clique
is either a single rectangle or an edge. Thus, for each profile, feasibility and
the slot loads can be checked by inspecting vertices and edges.

For a profile $w$ on $P_1$, let $X_w(b)$ denote the maximum of $X_w(a,b)$ over
the first digit $a$. The profiles used for the first $P_2$ gap have the
following values and final-digit $x$-loads:
\[
\begin{array}{c|c|c|c}
\text{profile}&w(P_1)&\max_K w(K)&(X_w(0),X_w(1),X_w(2),X_w(3))\\ \hline
O&0&0&(0,0,0,0)\\
A&14&1/2&(1/2,1/2,3/8,1/4)\\
B&16&3/4&(1/2,1/2,1/2,1/2)\\
C&26&1&(1,1,1/2,1/2)\\
D&27&1&(1,1,5/8,1/2)\\
E&28&1&(1,1,3/4,1/2).
\end{array}
\]
Here $\max_K w(K)$ is the maximum over cliques $K$ of $P_1$; in particular,
each listed profile is feasible.

The reusable vector $A_2$ uses the following four profiles:
\[
\begin{array}{c|c|c|c}
\text{profile}&w(P_1)&\max_K w(K)&(X_w(0),X_w(1),X_w(2),X_w(3))\\ \hline
R&13&1/2&(1/2,1/2,1/4,1/4)\\
S&16&1&(1/2,1/2,1/2,1/2)\\
U&10&1&(1,1,0,0)\\
V&12&1&(1,1,1/2,0).
\end{array}
\]

For $U$ and $V$ we also use $y$-loads and joint loads. For this table only,
write
\[
\widehat Z_w(b)=\max\{Z_w((a,b),t):a\in[4],\ t\in[4]^2\}.
\]
Then
\[
\begin{array}{c|c|c}
\text{profile}&\max_t Y_w(t)&(\widehat Z_w(0),\widehat Z_w(1),\widehat Z_w(2),\widehat Z_w(3))\\ \hline
U&1/2&(1/2,1/2,0,0)\\
V&1/2&(1/2,1/2,1/4,0).
\end{array}
\]
These are exactly the bounds used in \eqref{eq:p3-extra-loads}.

\end{document}